\newenvironment{eqenumerate}
{\begin{enumerate}[ref=\thesection.\theenumi]
		
		\setcounter{enumi}{\value{equation}}}
	{\setcounter{equation}{\value{enumi}}
\end{enumerate}}
\NewDocumentCommand\eqitem{ o }{
	\setcounter{enumi}{\value{equation}}
	\IfValueTF{#1}
	{\item[#1]}
	{\item}
	\setcounter{equation}{\value{enumi}}
}
\newtheorem{theorem}{Theorem}[section]
\newtheorem{corollary}[theorem]{Corollary}
\newtheorem{lemma}[theorem]{Lemma}
\newtheorem{assumption}[theorem]{Assumption}
\newtheorem*{Acknow*}{Acknowledgement}
\numberwithin{equation}{section}
\newtheorem{quotetheorem}[theorem]{Theorem}
\newtheorem{quotelemma}[theorem]{Lemma}
\theoremstyle{remark}
\newtheorem*{remark}{Remark}
\renewcommand{\S}{\mathbb{S}}
\DeclareMathOperator{\supp}{supp}
\newcommand{\R}{\mathbb{R}}
\newcommand{\N}{\mathbb{N}}
\newcommand{\C}{\mathbb{C}}
\newcommand{\Z}{\mathbb{Z}}
\renewcommand{\S}{\mathbb{S}}
\newcommand{\positiveR}{\R_{>0}}
\newcommand{\conjugate}{\overline}
\newcommand*{\variabledot}{\makebox[1ex]{\textbf{$\cdot$}}}
\newcommand{\adjoint}[1]{#1^{*}}
\newcommand\restr[2]{{
		\left.\kern-\nulldelimiterspace 
		#1 
		\vphantom{\big|} 
		\right|_{#2} 
}}
\newcommand{\sumnk}
{\sum_{n=-k-1}^{k}}
\newcommand{\rad}{\textup{rad.}}
\newcommand{\DiracS}{\widetilde{S}}
\newcommand{\DiracL}{\widetilde{L}}
\newcommand{\abs}[1]{\lvert#1\rvert} 
\newcommand{\norm}[1]{\lVert#1\rVert} 
\newcommand{\set}[2]{\{ \, #1 : #2 \, \} } 
\newcommand{\measure}[1]{ \lvert#1\rvert }
\newcommand{\innerproduct}[2]{\langle #1, #2 \rangle}
\newcommand{\Innerproduct}[2]{\mleft \langle #1, #2 \mright \rangle}
\newcommand{\HP}{ \mathcal{H} }
\newcommand{\Pkn}[2]{P_{#1}^{#2}}
\newcommand{\GPpn}[2]{C^{#1}_{#2}}
\newcommand{\matrixCkn}[2]{\mathcal{C}_{#1}^{#2}}
\newcommand{\Normalizekn}[2]{N_{#1}^{#2}}
\newcommand{\transpose}[1]{\prescript{\top\!}{}{#1}}
\newcommand{\matrixO}{O}
\newcommand{\matrixI}{I}
\newcommand{\Akn}[2]{A_{#1}^{#2}}
\newcommand{\transposeAkn}[2]{\transpose{\Akn{#1}{#2}}}
\newcommand{\Lambdak}[1]{\Lambda_{#1}}
\newcommand{\DiracLambdak}[1]{\widetilde{\Lambda}_{#1}}
\DeclareMathAlphabet{\mymathbb}{U}{BOONDOX-ds}{m}{n}
\newcommand{\zerovec}{0}
\newcommand{\ukn}[2]{u_{#1}^{#2}}
\newcommand{\vkn}[2]{v_{#1}^{#2}}
\newcommand{\fpm}{ f_{\pm} }
\newcommand{\matrixEk}[1]{{E}_{#1}}
\newcommand{\fk}[1]{f_{#1}}
\newcommand{\fkpm}[1]{ f_{#1, \pm} }
\newcommand{\fkp}[1]{ f_{#1, +} }
\newcommand{\fkm}[1]{ f_{#1, -} }
\newcommand{\Ykn}[2]{Y_{#1}^{#2}}
\newcommand{\matrixEkn}[2]{{E}_{#1}^{#2}}
\newcommand{\matrixYkn}[2]{\mathcal{Y}_{#1}^{#2}}
\newcommand{\fkn}[2]{f_{#1}^{#2}}
\newcommand{\fknpm}[2]{f_{#1, \pm}^{#2}}
\newcommand{\fknp}[2]{f_{#1, +}^{#2}}
\newcommand{\fknm}[2]{f_{#1, -}^{#2}}
\newcommand{\Diraclambda}{\widetilde{\lambda}}
\newcommand{\Diraclambdak}[1]{\Diraclambda_{#1}}
\newcommand{\Diraclambdasup}{\Diraclambda^*}
\DeclareFontFamily{U}{matha}{\hyphenchar\font45}
\DeclareFontShape{U}{matha}{m}{n}{
	<5> <6> <7> <8> <9> <10> gen * matha
	<10.95> matha10 <12> <14.4> <17.28> <20.74> <24.88> matha12
}{}
\DeclareSymbolFont{matha}{U}{matha}{m}{n}
\DeclareFontFamily{U}{mathx}{\hyphenchar\font45}
\DeclareFontShape{U}{mathx}{m}{n}{
	<5> <6> <7> <8> <9> <10>
	<10.95> <12> <14.4> <17.28> <20.74> <24.88>
	mathx10
}{}
\DeclareSymbolFont{mathx}{U}{mathx}{m}{n}
\DeclareMathSymbol{\obot}         {2}{matha}{"6B}
\DeclareMathSymbol{\bigobot}       {1}{mathx}{"CB}
\newcommand{\Hermiteop}{{H}}
\newcommand{\Hilbert}{\mathcal{H}}
\newcommand{\spectral}[1]{E_{#1}}
\providecommand*{\doi}[1]{doi:\href{https://doi.org/#1}{#1}}
\renewcommand*{\MR}[1]{\href{https://mathscinet.ams.org/mathscinet-getitem?mr=#1}{#1}}
\title[Smoothing estimates for the 3D Dirac equation]{Optimal constants of smoothing estimates for the 3D Dirac equation}
\date{}
\author{Makoto Ikoma}
\address[Makoto Ikoma]{Graduate School of Mathematics, Nagoya University, Furocho, Chikusaku, Nagoya, Aichi, 464-8602, Japan}
\email{ikma.m18005d@gmail.com}
\author{Soichiro Suzuki}
\address[Soichiro Suzuki]{Department of Mathematics, Chuo University, 1-13-27, Kasuga, Bunkyo-ku, Tokyo, 112-8551, Japan}
\email{soichiro.suzuki.m18020a@gmail.com}
\thanks{The second author was supported by Japan Society for the Promotion of Science (JSPS) KAKENHI Grant Number JP23KJ1939.}
\subjclass[2020]{33C55, 35B65, 35Q41, 42B10}
\keywords{Dirac equations; smoothing estimates; optimal constants; spherical harmonics.}
\begin{document}
	\begin{abstract}
		Recently, 
		\citeauthor{Iko2022} (\citeyear{Iko2022}) considered optimal constants and extremisers for the $2$-dimensional Dirac equation using the spherical harmonics decomposition. 
		Though its argument is valid in any dimensions $d \geq 2$, the case $d \geq 3$ remains open since it leads us to too complicated calculation: determining all eigenvalues and eigenvectors of infinite dimensional matrices.
		In this paper, we give  optimal constants and extremisers of smoothing estimates for the $3$-dimensional Dirac equation.
		In order to prove this, 
		we construct a certain orthonormal basis of spherical harmonics. 
		With respect to this basis, infinite dimensional matrices actually become block diagonal and so that eigenvalues and eigenvectors can be easily found. 
		As applications, we obtain the equivalence of the smoothing estimate for the Schr\"{o}dinger equation and the Dirac equation, 
		and improve a result by \citeauthor{BU2021} (\citeyear{BU2021}). 
	\end{abstract}
	\maketitle
	\section{Introduction}
	The Kato--Yajima smoothing estimates are one of the fundamental results in study of dispersive equations such as Schr\"odinger equations and Dirac equations, which were firstly observed by \citet{KY1989}, and have been studied by numerous researchers. 
	At first, we consider the following Schr\"odinger-type equation:
	\begin{equation} \label{eq:Schrodinger}
		\begin{cases}
			i \partial_t u(x, t) = \phi(\abs{D}) u(x, t) , & (x, t) \in \R^d \times \R , \\
			u(x, 0) = f(x) , & x \in \R^d , 
		\end{cases}
	\end{equation}
	where $\phi(\abs{D})$ denotes the Fourier multiplier operator whose symbol is $\phi(\abs{\variabledot})$, that is, 
	\begin{equation}
		\mathcal{F} \phi(\abs{D}) f = \phi(\abs{\xi}) \widehat{f}(\xi) .
	\end{equation}
	The function $\phi$ is called a dispersion relation.
	For example, \eqref{eq:Schrodinger} becomes the free Schr\"odinger equation if $\phi(r) = r^2$ and the relativistic Schr\"odinger equation if $\phi(r) = \sqrt{r^2 + m^2}$, respectively.
	The (global) smoothing estimate of the Schr\"odinger-type equation is expressed as
	\begin{equation}
		\int_{t \in \R}\int_{x \in \R^d}\abs{\psi(\abs{D}) e^{- it\phi(\abs{D})}f(x)}^2 w(\abs{x}) \, dx \, dt \leq C\norm{f}^2_{L^2(\R^d)} . \label{eq:smoothing Schrodinger}
	\end{equation}
	Here functions $w$ and $\psi$ are called spatial weight and smoothing function, respectively. 
	We write $C_{d}(w,\psi,\phi)$ for the optimal constant for the inequality \eqref{eq:smoothing Schrodinger}, in other words,
	\begin{equation}
		C_{d}(w,\psi,\phi) \coloneqq \sup_{ \substack{f\in L^2(\R^d) \\ 
				\norm{f}_2 = 1}}\int_{t \in \R}\int_{x \in \R^d}\abs{\psi(\abs{D})e^{- it\phi(\abs{D})}f(x)}^2 w(\abs{x}) \, dx \, dt. \label{eq:optimal Schrodinger}
	\end{equation}
	Since we are interested in explicit constants, here we clarify that the Fourier transform in this paper is defined by  
	\begin{equation}
		\hat{f}(\xi) \coloneqq \int_{x \in \R^d}f(x)e^{-ix\cdot\xi} \, dx .
	\end{equation}
	In this case, the Plancherel theorem states that 
	\begin{equation}
		\norm{\hat{f}}_{L^2(\R^d)}^2 = (2 \pi)^d \norm{f}_{L^2(\R^d)}^2 .
	\end{equation}
	
	\citet*{BSS2015} established the following abstract result for the Schr\"odinger-type equation:
	\begin{quotetheorem}[{\cite[Theorem 1.1]{BSS2015}}] \label{thm:Schrodinger}
		Let $d \geq 2$.
		Assume that $(w, \psi, \phi)$ is sufficiently nice (see Assumption \ref{assumption} below).
		Then we have
		\begin{equation}
			(2\pi)^{d-1} C_{d}(w,\psi,\phi) = \lambda^* \coloneqq \sup_{k\in\N}\sup_{r>0}\lambda_{k}(r), 
		\end{equation}
		where
		\begin{equation}
			\lambda_{k}(r) = \abs{\S^{d-2}}\frac{r^{d-1}\psi(r)^2}{\phi^{\prime}(r)}\int_{-1}^{1}F_{w}(r^2(1-t))p_{d,k}(t)(1-t^2)^{\frac{d-3}{2}} \, dt. \label{eq:lambda_k}
		\end{equation}
		Here $F_{w} \colon (0, \infty) \to \R$ is the function satisfying
		\begin{equation} \label{w_Fourier}
			F_w(\abs{\xi}^2 / 2) = \widehat{w(\abs{\variabledot})}(\xi) =
			\int_{x \in \R^d} w(\abs{x}) e^{- i x \cdot \xi} \, dx ,
		\end{equation}
		and $p_{d,k}$ is the Legendre polynomial of degree $k$ in $d$ dimensions, which may be defined in a number of ways, for example, via the Rodrigues formula,
		\begin{equation}
			(1-t^2)^{\frac{d-3}{2}}p_{d,k}(t) = (-1)^{k}\frac{\Gamma(\frac{d-1}{2})}{2^k\Gamma(k+\frac{d-1}{2})}
			\frac{d^k}{dt^k}(1-t^2)^{k+\frac{d-3}{2}}. \label{Legendre}
		\end{equation}
		Furthermore, extremisers exist if and only if there exists $k \in \N$ such that the Lebesgue measure of
		\begin{equation}
			\set{ r > 0 }{ \lambda_{k}(r) = \lambda^* }
		\end{equation}
		is non-zero.
	\end{quotetheorem}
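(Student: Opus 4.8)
The strategy is to diagonalise the Hermitian form underlying the left-hand side of \eqref{eq:smoothing Schrodinger} by successively exploiting the Fourier transform in $t$, the coarea formula associated with the level sets of $\phi$, and the Funk--Hecke theorem on $\S^{d-1}$. \emph{Step 1 (reduction to the sphere).} Set $g=\psi(\abs{D})e^{-it\phi(\abs{D})}f$. By Plancherel in $x$, the spatial integral $\int_{\R^{d}}\abs{g}^{2}w(\abs{x})\,dx$ is a fixed multiple of $\iint\widehat{w(\abs{\,\cdot\,})}(\xi-\eta)\,\psi(\abs{\xi})\psi(\abs{\eta})\,e^{it(\phi(\abs{\xi})-\phi(\abs{\eta}))}\,\widehat{f}(\eta)\,\conjugate{\widehat{f}(\xi)}\,d\xi\,d\eta$, and integrating in $t$ produces the factor $2\pi\,\delta(\phi(\abs{\xi})-\phi(\abs{\eta}))$. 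Since $\phi$ is strictly monotone this forces $\abs{\xi}=\abs{\eta}$; passing to polar coordinates $\xi=r\omega$, $\eta=\rho\theta$ and using $\delta(\phi(r)-\phi(\rho))=\abs{\phi'(r)}^{-1}\delta(r-\rho)$ collapses the $\rho$-integration, leaving
\[
\text{(LHS)}=(2\pi)^{1-2d}\int_{0}^{\infty}\frac{r^{2(d-1)}\psi(r)^{2}}{\abs{\phi'(r)}}\,\Innerproduct{T_{r}\widehat{f}(r\,\cdot\,)}{\widehat{f}(r\,\cdot\,)}_{L^{2}(\S^{d-1})}\,dr ,
\]
where $T_{r}$ is the integral operator on $L^{2}(\S^{d-1})$ with kernel $\widehat{w(\abs{\,\cdot\,})}(r\omega-r\theta)$. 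By \eqref{w_Fourier} and $\abs{r\omega-r\theta}^{2}=2r^{2}(1-\omega\cdot\theta)$ this kernel equals $F_{w}(r^{2}(1-\omega\cdot\theta))$, hence is zonal. I would make these formal $\delta$-computations rigorous by first taking $\widehat{f}\in C^{\infty}_{c}$ supported away from the origin and from the critical set of $\phi$, truncating the $t$-integral to $[-N,N]$ and letting $N\to\infty$, and then extending to general $f\in L^{2}$ by density and continuity of both sides of \eqref{eq:smoothing Schrodinger}.

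\emph{Step 2 (Funk--Hecke and the identification of $\lambda_{k}$).} By the Funk--Hecke theorem the zonal operator $T_{r}$ acts on each space of spherical harmonics of degree $k$ as multiplication by the scalar $\mu_{k}(r)=\abs{\S^{d-2}}\int_{-1}^{1}F_{w}(r^{2}(1-t))\,p_{d,k}(t)(1-t^{2})^{\frac{d-3}{2}}\,dt$, where $p_{d,k}$ is the Gegenbauer polynomial normalised by $p_{d,k}(1)=1$, which is precisely the normalisation fixed by the Rodrigues formula \eqref{Legendre}. Writing $\widehat{f}(r\,\cdot\,)=\sum_{k}\Pi_{k}\widehat{f}(r\,\cdot\,)$ for the spherical-harmonic decomposition and $a_{k}(r):=\Norm[L^{2}(\S^{d-1})]{\Pi_{k}\widehat{f}(r\,\cdot\,)}^{2}$, and using $\norm{f}_{L^{2}(\R^{d})}^{2}=(2\pi)^{-d}\int_{0}^{\infty}r^{d-1}\sum_{k}a_{k}(r)\,dr$, one factor $r^{d-1}$ together with $\psi(r)^{2}$, $\abs{\phi'(r)}^{-1}$ and $\mu_{k}(r)$ assembles into exactly the $\lambda_{k}(r)$ of \eqref{eq:lambda_k}, while the remaining $r^{d-1}$ matches the Jacobian in the polar form of $\norm{f}_{L^{2}}^{2}$ and the residual scalar is $(2\pi)^{1-2d}/(2\pi)^{-d}=(2\pi)^{1-d}$. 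Thus for every nonzero $f\in L^{2}(\R^{d})$,
\[
(2\pi)^{d-1}\,\frac{\text{(LHS)}}{\norm{f}_{L^{2}(\R^{d})}^{2}}=\frac{\int_{0}^{\infty}\sum_{k}\lambda_{k}(r)\,a_{k}(r)\,r^{d-1}\,dr}{\int_{0}^{\infty}\sum_{k}a_{k}(r)\,r^{d-1}\,dr}.
\]

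\emph{Step 3 (supremum and extremisers).} The right-hand side is a weighted average of the numbers $\lambda_{k}(r)$ against the positive measure $a_{k}(r)\,r^{d-1}\,dr$ on $\N\times\positiveR$; since $a_{k}(r)$ ranges over all nonnegative measurable weights subject only to the finiteness of $\norm{f}_{L^{2}}$ (concentrate $\widehat{f}(r\,\cdot\,)$ in a single degree $k$ and on a small interval of radii around any chosen $r_{0}$), taking the supremum over $f$ yields the essential supremum of $\lambda_{k}(r)$, which under the hypotheses of the theorem ($\lambda_{k}(\cdot)$ continuous) equals $\sup_{k}\sup_{r>0}\lambda_{k}(r)=\lambda^{*}$. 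This gives $(2\pi)^{d-1}C_{d}(w,\psi,\phi)=\lambda^{*}$. If $\abs{\set{r>0}{\lambda_{k}(r)=\lambda^{*}}}>0$ for some $k$, then choosing $\widehat{f}(r\omega)=\chi_{E}(r)\,Y_{k}(\omega)$ with $E$ a finite-measure subset of that set and $Y_{k}$ a unit-norm degree-$k$ harmonic makes the ratio equal $\lambda^{*}$, so an extremiser exists. Conversely, if that set is null for every $k$, then $\lambda_{k}(r)<\lambda^{*}$ for a.e.\ $r$ and every $k$, so the numerator above is strictly smaller than $\lambda^{*}$ times the denominator for every nonzero $f$, and no extremiser exists.

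\emph{Main obstacle.} The delicate point is Step 1: converting the purely formal $\delta$-function and coarea manipulations into a rigorous limiting argument — which tacitly requires $\phi\in C^{1}$ strictly monotone, $F_{w}$ well defined, and enough integrability so that the quadratic form is bounded — together with pinning down the Funk--Hecke eigenvalue $\mu_{k}(r)$ against the exact normalisation of $p_{d,k}$ dictated by \eqref{Legendre}. Once these are secured, Steps 2 and 3 are a careful but routine accounting of constants followed by an elementary averaging argument.
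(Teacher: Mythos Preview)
This statement is not proved in the present paper: it is a \texttt{quotetheorem} cited verbatim from \cite{BSS2015}, and the paper offers no argument for it (the closest it comes is Lemma~\ref{lem:Sf norm decomposition Dirac}, whose proof is likewise deferred to \cite{Iko2022}). There is therefore nothing in the paper to compare your proposal against.

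That said, your sketch is essentially the argument of \cite{BSS2015}: Plancherel in $x$ turns the weighted $L^2$-norm into a bilinear form with kernel $\widehat{w}(\xi-\eta)$, the $t$-integration localises to $\abs{\xi}=\abs{\eta}$, and the resulting zonal operator on $\S^{d-1}$ is diagonalised by Funk--Hecke with eigenvalues assembling into $\lambda_k(r)$. Your constant bookkeeping is correct (the $(2\pi)^{1-2d}$ against $(2\pi)^{-d}$ in $\norm{f}_{L^2}^2$ leaves exactly $(2\pi)^{1-d}$), and the averaging argument for the supremum and for extremisers is the standard one. The one place you flag as delicate --- making the $\delta$-function step rigorous --- is handled in \cite{BSS2015} by computing $\DiracS^*\DiracS$ (in the Schr\"odinger analogue) directly rather than via distributional limits, but your density-and-truncation route works as well. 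One small caveat: you invoke continuity of $\lambda_k$ to pass from essential supremum to supremum, which is not stated as a hypothesis here; in \cite{BSS2015} this is part of the standing assumptions on $(w,\psi,\phi)$.
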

	\begin{assumption} \label{assumption}
		Throughout the paper, we assume that $(w, \psi,\phi)$ satisfies the following conditions.
		\begin{itemize}
			\item
			The spatial weight $w \colon (0,\infty) \to [0, \infty)$ satisfies $w(\abs{\variabledot}) \in \mathcal{S}'(\R^d)$ in the sense that 
			\begin{equation}
				\mathcal{S}(\R^d) \ni \varphi \mapsto (w(\abs{\variabledot}), \varphi) \coloneqq \int_{x \in \R^d} w(\abs{x}) \varphi(x) \, dx
			\end{equation}
			is a tempered distribution. In addition, the Fourier transform of $w(\abs{\variabledot})$ is regular, which means that there exists $F_w \colon (0, \infty) \to \R$ satisfying
			\begin{equation}
				\int_{x \in \R^d} w(\abs{x}) \widehat{\varphi}(x) \, dx = 
				\int_{\xi \in \R^d} F_w(\abs{\xi}^2 /2) \varphi(\xi) \, d\xi
			\end{equation}
			holds for any $\varphi \in \mathcal{S}(\R^d)$.
			Furthermore, 
			\begin{equation}
				(0, \infty) \ni r \mapsto \int_{-1}^{1}F_{w}(r^2(1-t))p_{d,k}(t)(1-t^2)^{\frac{d-3}{2}} \, dt
			\end{equation}
			is continuous on $(0, \infty)$ for each $k \in \N$.
			\item 
			The smoothing function $\psi \colon (0, \infty) \to [0, \infty)$ is continuous.	
			\item The dispersion relation $\phi \colon (0, \infty) \to (0, \infty)$ is continuously differentiable and satisfies $\phi'(r) > 0$ for any $r > 0$.
		\end{itemize}
	\end{assumption}
	See \eqref{eq:type A}, \eqref{eq:type B}, \eqref{eq:type C} for typical examples for $(w, \psi, \phi)$.
	We note that $\lambda_{k} \colon (0, \infty) \to \R$ defined by \eqref{eq:lambda_k} is continuous under the assumption above.  
	Furthermore, it is known that $\lambda_k$ is actually non-negative (see \cite{BSS2015} for details).
	Now notice that if $(w,\psi,\phi)$ satisfies the assumption above, then $(w, \sqrt{r} \psi / \sqrt{\phi'}, r^2)$ also does.
	Hence, by the definition of $\{ \lambda_k \}_{k \in \N}$, Theorem \ref{thm:Schrodinger} immediately implies that 
	\begin{equation}
		C_{d}(w,\psi,\phi) = 2 C_{d}(w, \sqrt{r} \psi / \sqrt{\phi'}, r^2) 
	\end{equation}
	holds. 
	In particular, we have
	\begin{equation} 
		C_{d}(w,\psi,(r^2+m^2)^{1/2}) = 2 C_{d}(w, (r^2+m^2)^{1/4} \psi, r^2) ,
	\end{equation}
	this means that smoothing estimates and their optimal constants for relativistic Schr\"{o}dinger equations can be reduced to those for Schr\"{o}dinger equations.
	Hereinafter, we write $\phi_m(r) \coloneqq (r^2 + m^2)^{ 1/2 }$ for simplicity.
	
	Now we are going to discuss the free Dirac equation.	
	Let $d \geq 1$ and write $N \coloneqq 2^{\lfloor(d+1)/2\rfloor}$. 
	The $d$-dimensional free Dirac equation with mass $m \geq 0$ is given by 
	\begin{equation} \label{eq:Dirac}
		\begin{cases}
			i\partial_{t} u(x,t) = H_m u(x, t), & (x, t) \in \R^d \times \R , \\
			u(x,0) = f(x) , & x \in \R^d .
		\end{cases}
	\end{equation}
	Here $u$ and $f$ are $\C^N$-valued, and
	the Dirac operator $H_m$ is defined by
	\begin{equation}
		H_m \coloneqq \alpha\cdot D + m\beta = \sum_{j=1}^{d}\alpha_jD_j + m\beta ,
	\end{equation}
	where $\alpha_1, \alpha_2, \ldots, \alpha_d$, $\alpha_{d+1}=\beta$
	are $N\times N$ Hermitian matrices satisfying the anti-commutation relation $\alpha_j\alpha_k + \alpha_k\alpha_j = 2\delta_{jk}I_N$. 
	Note that we have $H_m^2 = ( - \Delta + m^2 ) \matrixI_N$.
	In this sense, the Dirac operator $H_m$ is similar to the relativistic Schr\"{o}dinger operator $(- \Delta + m^2)^{1/2}$.
	Hence, it is natural to conjecture that the following smoothing estimate for the Dirac equation,
	\begin{equation} \label{eq:smoothing Dirac}
		\int_{t \in \R}\int_{x \in \R^d}\abs{\psi(\abs{D})e^{-itH_m}f(x)}^2 w(\abs{x}) \, dx \, dt \leq C\norm{f}^2_{L^2(\R^d,\C^N)} ,
	\end{equation}
	is also similar to that for the relativistic Schr\"{o}dinger equation.
	Now let $\widetilde{C}_{d}(w,\psi,m)$ be the optimal constant of \eqref{eq:smoothing Dirac}, 
	and let $\widetilde{C}_{d, \rad}(w,\psi,m)$ be that with radial initial data, those are,
	\begin{gather} 
		\widetilde{C}_{d}(w,\psi,m) 
		\coloneqq \sup_{ \substack{ f\in L^2(\R^d,\C^N) \\ 
				\norm{f}_2 = 1}}\int_{t \in \R}\int_{x \in \R^d}\abs{\psi(\abs{D})e^{-itH_m}f(x)}^2 w(\abs{x}) \, dx \, dt , 
		\label{eq:optimal Dirac}\\
		\widetilde{C}_{d, \rad}(w,\psi,m) 
		\coloneqq \sup_{ \substack{f\in L^2(\R^d,\C^N) \\ 
				\norm{f}_2 = 1, f \text{:radial}} } \int_{t \in \R}\int_{x \in \R^d}\abs{\psi(\abs{D})e^{-itH_m}f(x)}^2 w(\abs{x}) \, dx \, dt .
		\label{eq:optimal Dirac radial}
	\end{gather}
	Recently, \citet{Iko2022} and \citet{IkS2023} studied \eqref{eq:optimal Dirac} for $d = 2$ and \eqref{eq:optimal Dirac radial} for arbitrary $d \geq 2$. They obtained the following results, which are analogous to Theorem \ref{thm:Schrodinger}.
	\begin{quotetheorem}[{\cite[Theorems 2.1, 2.2]{Iko2022}}]\label{thm:2D Dirac intro}
		Let $d \geq 2$ and write
		\begin{equation}
			\Diraclambda_{k}(r) 
			\coloneqq \frac{1}{2} \mleft( \lambda_{k}(r) + \lambda_{k+1}(r) + \frac{m}{\sqrt{r^2+m^2}} \abs{ \lambda_{k}(r) - \lambda_{k+1}(r) } \mright) ,
		\end{equation}
		where $\lambda_{k}$ is that given by \eqref{eq:lambda_k} associated with $(w, \psi, \phi_m)$.
		Then we have
		\begin{equation} \label{eq:Dirac 2D} 
			2 \pi \widetilde{C}_{2}(w,\psi,m) = \Diraclambda^* \coloneqq \sup_{ k \in \N }\sup_{r>0}\Diraclambda_{k}(r) 
		\end{equation}
		if $d = 2$, and
		\begin{equation} 
			\widetilde{C}_{d}(w,\psi,m) \leq C_{d}(w,\psi,\phi_m) = 2 C_{d}(w, \phi_m^{1/2} \psi, r^2) 
		\end{equation}	
		whenever $d \geq 2$.
		Furthermore, in the case $d=2$, extremisers exist if and only if there exists $k \in \N$ such that the Lebesgue measure of
		\begin{equation}
			\set{ r > 0 }{ \Diraclambda_{k}(r) = \Diraclambda^* }
		\end{equation}
		is non-zero.
	\end{quotetheorem}
	\begin{quotetheorem}[{\cite[Theorem 2.5]{IkS2023}}]\label{thm:radial Dirac}
		Let $d \geq 2$ and write
		\begin{equation}
			\Diraclambda_\rad(r) \coloneqq \frac{1}{2}\mleft( \lambda_0(r) + \lambda_1(r) + \frac{m^2}{r^2 + m^2}( \lambda_0(r) - \lambda_1(r) ) \mright), \label{eq:lambda radial Dirac} 
		\end{equation}
		where $\lambda_{k}$ is that given by \eqref{eq:lambda_k} associated with $(w, \psi, \phi_m)$.
		Then we have
		\begin{equation} 
			(2\pi)^{d-1} \widetilde{C}_{d, \rad}(w,\psi,m) = \Diraclambda_\rad^* \coloneqq \sup_{r>0} \Diraclambda_\rad(r) .
		\end{equation}
		Furthermore, extremisers exist if and only if the Lebesgue measure of
		\begin{equation}
			\set{ r > 0 }{ \Diraclambda_\rad(r) = \Diraclambda_\rad^* }
		\end{equation}
		is non-zero.
	\end{quotetheorem}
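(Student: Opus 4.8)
The plan is to diagonalise the Dirac propagator into its positive- and negative-energy parts, note that these are separated in the time Fourier variable $\tau$, and thereby reduce the smoothing estimate for radial data to two Schr\"odinger-type smoothing quantities governed by $\lambda_0$ and $\lambda_1$. Since $H_m^2 = (-\Delta+m^2)I_N = \phi_m(\abs{D})^2 I_N$ and $(\alpha\cdot\xi+m\beta)^2 = (\abs{\xi}^2+m^2)I_N$, the operators $\Pi_{\pm}\coloneqq\tfrac12\bigl(I_N\pm\phi_m(\abs{D})^{-1}H_m\bigr)$ are complementary orthogonal projections on $L^2(\R^d,\C^N)$ (their symbols are spectral projections of a Hermitian involution), and
\begin{equation}
	e^{-itH_m} = e^{-it\phi_m(\abs{D})}\Pi_{+} + e^{it\phi_m(\abs{D})}\Pi_{-} .
\end{equation}
Writing $f_{\pm}\coloneqq\Pi_{\pm}f$ we get $f = f_+ + f_-$ and $\norm{f}_2^2 = \norm{f_+}_2^2 + \norm{f_-}_2^2$. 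Taking a Fourier transform in $t$, the two summands of $\psi(\abs{D})e^{-itH_m}f$ have $\tau$-support in $\{\tau\le -m\}$ and $\{\tau\ge m\}$ respectively, so their $x$-wise $L^2_t$ cross term vanishes (the overlap $\{\tau=0\}$ being null when $m=0$), whence
\begin{equation}
	\int_{\R}\int_{\R^d}\abs{\psi(\abs{D})e^{-itH_m}f}^2 w(\abs{x})\,dx\,dt = \int_{\R}\int_{\R^d}\abs{\psi(\abs{D})e^{-it\phi_m(\abs{D})}f_+}^2 w\,dx\,dt + \int_{\R}\int_{\R^d}\abs{\psi(\abs{D})e^{it\phi_m(\abs{D})}f_-}^2 w\,dx\,dt .
\end{equation}
Each summand is an instance of \eqref{eq:smoothing Schrodinger} (with $\phi=\phi_m$, resp.\ $\phi=-\phi_m$), and since \eqref{eq:lambda_k} depends on $\phi$ only through $\abs{\phi'}$, both are governed by the same $\lambda_k$.

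Now take $f$ radial, $\widehat{f}(\xi)=\widehat{g}(\abs{\xi})$ with $\widehat{g}\colon\positiveR\to\C^N$; by density we may assume $\widehat{g}$ smooth and supported in a compact subset of $(0,\infty)$. Then $\widehat{f_{\pm}}(\rho\omega)=M_{\pm}(\rho\omega)\widehat{g}(\rho)$, where
\begin{equation}
	M_{\pm}(\rho\omega) = \tfrac12\bigl(I_N\pm c_\rho\,\alpha\cdot\omega\pm s_\rho\,\beta\bigr), \qquad c_\rho=\frac{\rho}{\phi_m(\rho)}, \quad s_\rho=\frac{m}{\phi_m(\rho)} ;
\end{equation}
in particular, for radial $f$ the functions $f_{\pm}$ carry angular content only in degrees $0$ and $1$, on which the Schr\"odinger smoothing form diagonalises with profiles $\lambda_0$ and $\lambda_1$. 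Concretely, running the computation behind Theorem~\ref{thm:Schrodinger} — Plancherel in $t$, passage to polar coordinates, and the Funk--Hecke formula together with \eqref{w_Fourier} — while retaining the matrix structure of $M_{\pm}$, one finds that each summand above equals
\begin{equation}
	\frac{1}{(2\pi)^{2d-1}}\int_0^\infty\frac{\rho^{2(d-1)}\psi(\rho)^2}{\phi_m'(\rho)}\,\Innerproduct{B_{\pm}(\rho)\widehat{g}(\rho)}{\widehat{g}(\rho)}\,d\rho, \qquad B_{\pm}(\rho)\coloneqq\int_{\S^{d-1}}\int_{\S^{d-1}}M_{\pm}(\rho\omega')M_{\pm}(\rho\omega)\,F_{w}\bigl(\rho^2(1-\omega\cdot\omega')\bigr)\,d\omega\,d\omega' .
\end{equation}

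The heart of the matter is the evaluation of $B_+(\rho)+B_-(\rho)$. Using $\alpha_j\alpha_k+\alpha_k\alpha_j=2\delta_{jk}I_N$, $\alpha_j\beta+\beta\alpha_j=0$ and $\beta^2=I_N$, the part of $M_{\pm}(\rho\omega')M_{\pm}(\rho\omega)$ that is symmetric in $\omega\leftrightarrow\omega'$ equals $\tfrac14\bigl[(1+s_\rho^2+c_\rho^2\,\omega\cdot\omega')I_N\pm\bigl(P_\rho(\omega)+P_\rho(\omega')\bigr)\bigr]$ with $P_\rho(\omega)=c_\rho\,\alpha\cdot\omega+s_\rho\beta$; since the kernel $F_{w}(\rho^2(1-\omega\cdot\omega'))$ is symmetric in $\omega\leftrightarrow\omega'$, only this symmetric part survives, and the $\pm(P_\rho(\omega)+P_\rho(\omega'))$ contributions cancel upon adding the two (and anyway the $\alpha$-linear part of $P_\rho(\omega)+P_\rho(\omega')$ integrates to $0$ against the kernel by Funk--Hecke). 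Hence, with $A_k(\rho)\coloneqq\int_{-1}^1 p_{d,k}(t)F_{w}(\rho^2(1-t))(1-t^2)^{\frac{d-3}{2}}\,dt$ (so that $p_{d,0}=1$, $p_{d,1}(t)=t$, and $\lambda_k(\rho)=\abs{\S^{d-2}}\rho^{d-1}\psi(\rho)^2\phi_m'(\rho)^{-1}A_k(\rho)$ for $\phi=\phi_m$), and using $c_\rho^2=1-s_\rho^2$ and $s_\rho^2=m^2/(\rho^2+m^2)$,
\begin{equation}
	B_+(\rho)+B_-(\rho) = \frac{\abs{\S^{d-1}}\abs{\S^{d-2}}}{2}\bigl[A_0(\rho)+A_1(\rho)+s_\rho^2\bigl(A_0(\rho)-A_1(\rho)\bigr)\bigr]I_N = \frac{\abs{\S^{d-1}}\,\phi_m'(\rho)}{\rho^{d-1}\psi(\rho)^2}\,\Diraclambda_\rad(\rho)\,I_N ,
\end{equation}
the second equality being precisely the definition \eqref{eq:lambda radial Dirac} of $\Diraclambda_\rad$. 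Substituting back and using the Plancherel identity $\abs{\S^{d-1}}\int_0^\infty\abs{\widehat{g}(\rho)}^2\rho^{d-1}\,d\rho=(2\pi)^d\norm{f}_2^2$, we obtain, for every radial $f\neq 0$,
\begin{equation}
	\int_{\R}\int_{\R^d}\abs{\psi(\abs{D})e^{-itH_m}f}^2 w(\abs{x})\,dx\,dt = \frac{1}{(2\pi)^{d-1}}\cdot\frac{\int_0^\infty\rho^{d-1}\Diraclambda_\rad(\rho)\abs{\widehat{g}(\rho)}^2\,d\rho}{\int_0^\infty\rho^{d-1}\abs{\widehat{g}(\rho)}^2\,d\rho}\,\norm{f}_2^2 ;
\end{equation}
note the right-hand side no longer depends on the $\C^N$-direction of $\widehat{g}$, again a consequence of the cancellation.

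Taking the supremum over radial $f$ gives $(2\pi)^{d-1}\widetilde{C}_{d,\rad}(w,\psi,m)=\sup_{\rho>0}\Diraclambda_\rad(\rho)=\Diraclambda_\rad^*$. For the extremiser statement, since $\rho^{d-1}(\Diraclambda_\rad^*-\Diraclambda_\rad(\rho))\abs{\widehat{g}(\rho)}^2\ge 0$, equality in the last identity forces $\widehat{g}$ to vanish a.e.\ off $\set{\rho>0}{\Diraclambda_\rad(\rho)=\Diraclambda_\rad^*}$, which for a nonzero $\widehat{g}\in L^2(\rho^{d-1}\,d\rho)$ requires this set to have positive Lebesgue measure; conversely, if it does, the indicator of a bounded-away-from-$0$-and-$\infty$ sub-piece of it, times any fixed vector, yields a radial extremiser. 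The step I expect to be the main obstacle is the evaluation of $B_+(\rho)+B_-(\rho)$: verifying that the spin algebra collapses it to a scalar multiple of $I_N$, with every $\beta$-dependent term and every term mixing the degree-$0$ and degree-$1$ angular sectors cancelling between the two energy projections. This is exactly where the radial hypothesis is used, and it explains why the answer carries the factor $m^2/(r^2+m^2)$ with no absolute value, in contrast to the general two-dimensional formula of Theorem~\ref{thm:2D Dirac intro}; one should also take some care with the $m=0$ endpoint and with the density argument justifying the Plancherel and Funk--Hecke manipulations.
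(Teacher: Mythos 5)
Your proof is correct, and its ingredients — the energy projections $\Pi_{\pm}$, the disjointness of the $\tau$-supports of the two half-waves (so that the smoothing quantity is additive over $f_+$, $f_-$), and Funk--Hecke in polar coordinates — are the same ones underlying the paper's machinery (the operator $\DiracS$ together with Lemma \ref{lem:Sf norm decomposition Dirac}). The execution, however, is genuinely different. The paper, and the cited proof in \cite{IkS2023}, first prove the abstract identity of Lemma \ref{lem:Sf norm decomposition Dirac} (valid for all $f$) and then diagonalize a family of $2\times 2$ (or $4\times 4$) matrices $\widetilde{\Lambda}_k(r)$ after a careful choice of spherical-harmonics basis; the radial statement is read off from the $k=0,1$ blocks. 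You instead exploit the radial hypothesis from the outset: since $\widehat{f_{\pm}}(\rho\omega)=M_{\pm}(\rho\omega)\widehat{g}(\rho)$ with $M_{\pm}$ built from $I$, $\beta$ and the degree-one polynomial $\alpha\cdot\omega$, the angular content is confined to degrees $0$ and $1$, and a direct double angular integral produces the $\C^N$-valued density $B_{\pm}(\rho)$. The heart of the argument is exactly the cancellation you isolate — the $\pm(P_\rho(\omega)+P_\rho(\omega'))$ terms cancel upon summing $B_+ + B_-$, the $\alpha$-linear remnants integrate to zero by symmetry, and the $\omega\cdot\omega'$-dependence splits into $p_{d,0}$ and $p_{d,1}$ — so that $B_+(\rho)+B_-(\rho)$ collapses to a scalar multiple of $I_N$ proportional to $\Diraclambda_\rad(\rho)$. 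This gives an exact identity for the smoothing quantity as a $\Diraclambda_\rad$-weighted average over $\rho$, from which both the optimal constant and the extremiser characterization drop out at once. Your version is more self-contained and elementary (no abstract spherical-harmonics lemma, no basis engineering), at the cost of working only for radial data; the paper's route scales to the full non-radial problem, which is where the careful basis choice earns its keep. I verified the key algebraic step: writing $P(\omega)=c_\rho\,\alpha\cdot\omega+s_\rho\beta$, the $\omega\leftrightarrow\omega'$-symmetric part of $P(\omega')P(\omega)$ is $(c_\rho^2\,\omega\cdot\omega'+s_\rho^2)I_N$ by the anticommutation relations, and then $(1+s_\rho^2)A_0+c_\rho^2A_1=A_0+A_1+s_\rho^2(A_0-A_1)$ with $s_\rho^2=m^2/(\rho^2+m^2)$ reproduces \eqref{eq:lambda radial Dirac} exactly, with no absolute value — as it should, since for radial data the two projections are summed rather than maximized over. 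One small presentational point: it would be worth stating explicitly that $M_\pm$ is Hermitian, which is what makes $\Innerproduct{M_\pm(\rho\omega)\widehat{g}}{M_\pm(\rho\omega')\widehat{g}}=\Innerproduct{M_\pm(\rho\omega')M_\pm(\rho\omega)\widehat{g}}{\widehat{g}}$ and hence legitimizes your definition of $B_\pm$.
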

	In this paper, we show that the identity \eqref{eq:Dirac 2D} also holds in the physically most important case $d=3$.
	\begin{theorem}\label{thm:3D Dirac intro}
		Let $d = 3$. 
		Then we have
		\begin{equation} 
			(2\pi)^2 \widetilde{C}_{3}(w,\psi,m) = \Diraclambda^* = \sup_{ k \in \N }\sup_{r>0}\Diraclambda_{k}(r) .
		\end{equation}
		Furthermore, extremisers exist if and only if there exists $k \in \N$ such that the Lebesgue measure of
		\begin{equation}
			\set{ r > 0 }{ \Diraclambda_{k}(r) = \Diraclambda^* }
		\end{equation}
		is non-zero.
	\end{theorem}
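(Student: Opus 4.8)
The plan is to run the spherical-harmonics reduction of \citet{Iko2022}---which, as recalled there, applies in every dimension $d\geq2$---and then to resolve the resulting infinite-dimensional eigenvalue problem by choosing an orthonormal basis of spherical harmonics in which the operators involved become block-diagonal with $2\times2$ blocks. First one reduces matters to the sphere. Writing $e^{-itH_m}=e^{-it\phi_m(\abs{D})}\Pi_{+}+e^{it\phi_m(\abs{D})}\Pi_{-}$ with the energy projections $\Pi_{\pm}=\tfrac12\bigl(\matrixI_N\pm H_m/\phi_m(\abs{D})\bigr)$, whose symbols are $\Pi_{\pm}(\xi)=\tfrac12\bigl(\matrixI_N\pm(\alpha\cdot\xi+m\beta)/\phi_m(\abs{\xi})\bigr)$, one expands the left-hand side of~\eqref{eq:smoothing Dirac} on the Fourier side exactly as in the proof of Theorem~\ref{thm:Schrodinger}, using $\widehat{w(\abs{\cdot})}(\xi-\eta)=F_w(\tfrac12\abs{\xi-\eta}^2)$. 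The $t$-integration produces factors $\delta\bigl(\phi_m(\abs{\xi})\mp\phi_m(\abs{\eta})\bigr)$; since $\phi_m>0$ on $\positiveR$ the ``$+$'' term is supported on a Lebesgue-null set, so the $\Pi_{+}$- and $\Pi_{-}$-contributions decouple, and each localises to a single sphere $\abs{\xi}=\abs{\eta}=r$, on which $\abs{\xi-\eta}^2=2r^2(1-\omega\cdot\omega')$. Hence $(2\pi)^2\widetilde{C}_{3}(w,\psi,m)=\sup_{r>0}\max\{s_{+}(r),s_{-}(r)\}$, where $s_{\pm}(r)$ is the supremum of the spectrum of the self-adjoint operator $\matrixYkn{r}{\pm}$ on $L^2(\S^2,\C^4)$ given by
\begin{equation}
	(\matrixYkn{r}{\pm}g)(\omega)=c(r)\,\Pi_{\pm}(r\omega)\int_{\S^2}F_w\bigl(r^2(1-\omega\cdot\omega')\bigr)\,\Pi_{\pm}(r\omega')\,g(\omega')\,d\omega',
\end{equation}
with $c(r)>0$ the normalising factor (from the coarea formula, matched as in Theorem~\ref{thm:Schrodinger}) for which the scalar kernel operator $g\mapsto c(r)\int_{\S^2}F_w(r^2(1-\omega\cdot\omega'))\,g(\omega')\,d\omega'$ acts on degree-$k$ spherical harmonics as multiplication by $\lambda_k(r)$ (the Funk--Hecke formula, together with~\eqref{eq:lambda_k} and~\eqref{Legendre}).

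In the naive basis---ordinary harmonics tensored with a fixed basis of $\C^4$---the multiplications $\Pi_{\pm}(r\omega)$ couple degrees $\ell$ and $\ell\pm1$ in an essentially uncontrolled way, which is the source of the infinite matrices; the new ingredient is a basis that tames this. For $d=3$ one has $N=4$; fix the standard (Dirac) representation, in which $\alpha_j=\left(\begin{smallmatrix}0&\sigma_j\\\sigma_j&0\end{smallmatrix}\right)$ and $\beta=\left(\begin{smallmatrix}\matrixI_2&0\\0&-\matrixI_2\end{smallmatrix}\right)$ with the Pauli matrices $\sigma_j$. I would use (or, as in the paper, construct from Clebsch--Gordan coefficients) the spinor spherical harmonics $\Omega_{\kappa}^{\mu}$, $\kappa\in\Z\setminus\{0\}$: they form a complete orthonormal system of $L^2(\S^2,\C^2)$, each $\Omega_{\kappa}^{\mu}$ is a combination of ordinary spherical harmonics of a single degree $\ell_{\kappa}$ with $\{\ell_{\kappa},\ell_{-\kappa}\}=\{\abs{\kappa}-1,\abs{\kappa}\}$, and---the crucial point---they satisfy the pointwise identity $(\sigma\cdot\omega)\Omega_{\kappa}^{\mu}=-\Omega_{-\kappa}^{\mu}$. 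Put $\matrixEk{\kappa,\mu}\coloneqq\operatorname{span}\left\{\left(\begin{smallmatrix}\Omega_{\kappa}^{\mu}\\0\end{smallmatrix}\right),\left(\begin{smallmatrix}0\\\Omega_{-\kappa}^{\mu}\end{smallmatrix}\right)\right\}$. These $2$-dimensional subspaces are mutually orthogonal and $\bigoplus_{\kappa,\mu}\matrixEk{\kappa,\mu}=L^2(\S^2,\C^4)$; moreover, by the identity above, each $\matrixEk{\kappa,\mu}$ is invariant under multiplication by $\Pi_{\pm}(r\omega)$---which, restricted to $\matrixEk{\kappa,\mu}$, equals in the displayed orthonormal basis the constant rank-one orthogonal projection $\pi_{\pm}$ with diagonal entries $\tfrac12\bigl(1\pm\tfrac{m}{\phi_m(r)}\bigr)$ and $\tfrac12\bigl(1\mp\tfrac{m}{\phi_m(r)}\bigr)$---and under the scalar kernel operator, which acts on $\matrixEk{\kappa,\mu}$ as $\operatorname{diag}(\lambda_{\ell_{\kappa}}(r),\lambda_{\ell_{-\kappa}}(r))$. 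Consequently $\matrixYkn{r}{\pm}$ is block-diagonal, equal on $\matrixEk{\kappa,\mu}$ to $\pi_{\pm}\Lambdak{\kappa}\pi_{\pm}$ with $\Lambdak{\kappa}=\operatorname{diag}(\lambda_k(r),\lambda_{k+1}(r))$, $\abs{\kappa}=k+1$ (the order of the diagonal entries depending on $\operatorname{sgn}\kappa$).

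Since $\pi_{\pm}$ has rank one, the only nonzero eigenvalue of $\pi_{\pm}\Lambdak{\kappa}\pi_{\pm}$ is $\operatorname{tr}(\Lambdak{\kappa}\pi_{\pm})$, which a one-line computation identifies with $\tfrac12\bigl(\lambda_k(r)+\lambda_{k+1}(r)\pm\tfrac{m}{\phi_m(r)}(\lambda_k(r)-\lambda_{k+1}(r))\bigr)$ for $\abs{\kappa}=k+1$; letting the sign and the order of the diagonal (that is, $\kappa>0$ versus $\kappa<0$) vary, one obtains both of the numbers $\tfrac12\bigl(\lambda_k(r)+\lambda_{k+1}(r)\pm\tfrac{m}{\phi_m(r)}\abs{\lambda_k(r)-\lambda_{k+1}(r)}\bigr)$ for each $k$, the larger being $\Diraclambda_{k}(r)$. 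Hence $\max\{s_{+}(r),s_{-}(r)\}=\sup_{k\in\N}\Diraclambda_{k}(r)$, and taking $\sup_{r>0}$ gives $(2\pi)^2\widetilde{C}_{3}(w,\psi,m)=\Diraclambda^*$. For the extremiser statement I would argue as in \citet{BSS2015} and \citet{Iko2022}: the block-diagonalisation reduces the equality case to the scalar analysis underlying Theorem~\ref{thm:Schrodinger}, so a genuine extremiser exists precisely when $\widehat{f}$ can be supported, on a set of positive measure in $r$, inside a block attaining $\Diraclambda^*$---equivalently, when $\set{r>0}{\Diraclambda_{k}(r)=\Diraclambda^*}$ has positive Lebesgue measure for some $k\in\N$.

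The main obstacle---and the point at which $d=3$ genuinely differs from $d=2$---is the construction and verification of this basis: one must recognise the right pairing of harmonics of consecutive degrees so that $\Pi_{\pm}(r\omega)$ no longer couples distinct blocks, establish completeness and orthonormality of $\{\Omega_{\kappa}^{\mu}\}$, and, above all, verify the pointwise invariance $\Pi_{\pm}(r\omega)\matrixEk{\kappa,\mu}\subseteq\matrixEk{\kappa,\mu}$, which rests entirely on the identity $(\sigma\cdot\omega)\Omega_{\kappa}^{\mu}=-\Omega_{-\kappa}^{\mu}$. Once this is secured the problem collapses to the $2\times2$ linear algebra above; without such a basis one faces the genuinely unwieldy task, noted in the abstract, of diagonalising infinite-dimensional matrices.
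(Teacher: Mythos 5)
Your proposal is correct and follows essentially the same strategy as the paper: reduce via the spherical harmonics decomposition and the Funk--Hecke theorem, then construct an orthonormal basis of $\C^2$-valued (resp.\ $\C^4$-valued) spherical harmonics which block-diagonalises $\alpha\cdot\omega$ and $\beta$ simultaneously into $2\times2$ (resp.\ $4\times4$) blocks, and finish by a short eigenvalue computation identifying $\Diraclambda_k(r)$. The one expositional difference is that you invoke the standard spinor spherical harmonics $\Omega_\kappa^\mu$ and their identity $(\sigma\cdot\omega)\Omega_\kappa^\mu=-\Omega_{-\kappa}^\mu$ as known, whereas the paper (Lemma~\ref{lem:identity for 3D}, Corollary~\ref{cor:orthonormal basis for C^2}, Lemma~\ref{lem:identity for 3D Ekn}) rebuilds exactly these objects from scratch out of Gegenbauer three-term recurrences, obtaining vectors $\ukn{k}{n},\vkn{k}{n}$ and matrices $\Akn{k}{n}$ whose algebra $\transposeAkn{k}{n}\Akn{k}{n}+\Akn{k-1}{n}\transposeAkn{k-1}{n}=\matrixI$, $\Akn{k+1}{n}\Akn{k}{n}=\matrixO$ encodes precisely the ladder property of $\sigma\cdot\omega$; your route is more economical while the paper's is self-contained.
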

	\begin{remark}
		\citet*{BRS2020} established smoothing estimates and obtained optimal constants in more abstract setting.
		Let $\Hilbert$ be a Hilbert space, $\Hermiteop$ be a self-adjoint operator on $\Hilbert$ whose spectrum $\sigma(\Hermiteop)$ is purely absolutely continuous, and $\{ \spectral{H}(r) \}_{r \in \R}$ denotes the spectral family of $\Hermiteop$. 
		Consider the following abstract Schr\"{o}dinger equation:
		\begin{equation}
			\begin{cases}
				i \partial_t u(t) = \phi(\Hermiteop) u(t) , & t \in \R , \\
				u(0) = f , & f \in \Hilbert .
			\end{cases}
		\end{equation}
		\citet[Theorem 3.8]{BRS2020} states that the optimal constant of the smoothing estimate
		\begin{equation}
			\int_{t \in \R} \norm{\psi(\Hermiteop) e^{ - i t \phi(\Hermiteop) } f}_{\adjoint{\mathcal{X}}}^2 \, dt \leq C \norm{f}_{\Hilbert}^2
		\end{equation}
		is given by
		\begin{equation}
			C = 2 \pi \sup_{r \in \sigma(\Hermiteop)} \frac{ ( \psi(r) )^2 }{ \abs{ \phi'(r) } } \norm{ A_{\Hermiteop}(r) }_{ \mathcal{X} \to \adjoint{\mathcal{X}} }^2 ,
		\end{equation}
		where:
		\begin{itemize}
			\item $\psi$ and $\phi$ are sufficiently nice real-valued functions (see \citet[Assumption 3.2]{BRS2020} for details).
			\item $\mathcal{X} \subset \Hilbert$ is a dense subspace with a stronger norm $\norm{\variabledot}_{\mathcal{X}}$ and so that $\Hilbert \subset \adjoint{\mathcal{X}}$ via the canonical embedding
			\begin{equation}
				\Hilbert \ni f \mapsto ( \varphi \in \mathcal{X} \mapsto \innerproduct{ f }{ \varphi }_{\Hilbert} ) \in \adjoint{\mathcal{X}} .
			\end{equation}
			\item $A_{\Hermiteop}(r) \colon \mathcal{X} \to \adjoint{\mathcal{X}}$ is the spectral derivative
			\begin{equation}
				\innerproduct{ A_{\Hermiteop}(r) f }{ g }_{\adjoint{\mathcal{X}}, \mathcal{X}} = \frac{d}{dr} \innerproduct{ \spectral{H}(r) f }{ g }_{\Hilbert} .
			\end{equation}
		\end{itemize}
		In particular, for a spatial weight satisfying $0 < w \in L^\infty(\R^d)$, letting
		\begin{equation}
			\Hilbert = L^2(\R^d) , 
			\quad \mathcal{X} = L^2_{1/w}(\R^d) , \quad
			\Hermiteop = \abs{ \nabla }
		\end{equation}
		and using $\adjoint{\mathcal{X}} \cong L^2_w(\R^d)$, we obtain 
		\begin{equation}
			C_{d}(w,\psi,\phi) = 
			2 \pi \sup_{r \geq 0} \frac{ ( \psi(r) )^2 }{ \abs{ \phi'(r) } } \norm{ A_{\abs{ \nabla }}(r) }_{ L^2_{1/w}(\R^d) \to L^2_w(\R^d) }^2 .
		\end{equation}
		Similarly, 
		letting
		\begin{equation}
			\Hilbert = L^2(\R^d, \C^N) , 
			\quad \mathcal{X} = L^2_{1/w}(\R^d, \C^N) , \quad
			\Hermiteop = H_m
		\end{equation}
		gives us
		\begin{equation}
			\widetilde{C}_{d}(w,\psi,m) = 
			2 \pi \sup_{\abs{r} \geq m} ( \psi( \sqrt{ r^2 - m^2 } ) )^2  \norm{ A_{H_m}(r) }_{ L^2_{1/w}(\R^d, \C^N) \to L^2_w(\R^d, \C^N) }^2 .
		\end{equation}
		However, this method is inapplicable for unbounded weights such as $w(x) = \abs{x}^{-s}$ unlike Theorems \ref{thm:Schrodinger} and \ref{thm:3D Dirac intro}, since $L^2_{1/w} \not \subset L^2$. 
	\end{remark}
	Combining Theorems \ref{thm:Schrodinger}, \ref{thm:2D Dirac intro}, \ref{thm:3D Dirac intro}, and the trivial inequality
	\begin{equation}
		\frac{1}{2} \max\{ \lambda_k(r), \lambda_{k+1}(r) \} \leq \Diraclambdak{k}(r) \leq \max\{ \lambda_k(r), \lambda_{k+1}(r) \} ,
	\end{equation}
	we obtain the equivalence of smoothing estimates for the Schr\"{o}dinger, relativistic Schr\"{o}dinger, and Dirac equations when $d = 2, 3$.
	\begin{corollary} \label{cor:Dirac Schrodinger equivalence}
		Let $d \geq 2$.
		Then we have
		\begin{equation} 
			\widetilde{C}_{d}(w,\psi,m) \leq C_{d}(w,\psi,\phi_m) = 2 C_{d}(w, \phi_m^{1/2} \psi, r^2).
		\end{equation}
		Furthermore, when $d = 2, 3$, we also have
		\begin{equation} 
			C_{d}(w, \phi_m^{1/2} \psi, r^2) = \frac{1}{2} C_{d}(w,\psi,\phi_m) \leq \widetilde{C}_{d}(w,\psi,m).
		\end{equation}
	\end{corollary}
	For the Schr\"{o}dinger equation, it is classically known that the smoothing estimate holds in the following cases:
	\begin{alignat}{8}
		&d \geq 3,  \quad && s \geq 2 , \quad && ( w(r), \psi(r) ) = ( &&(1+r^2)^{-s/2}, \quad&&(1+r^2)^{1/4} &&) ,
		\label{eq:type A} \tag{A}\\
		&d \geq 2,  \quad && 1 < s < d, \quad&& ( w(r), \psi(r) ) = ( &&r^{-s}, \quad&&r^{(2-s)/2} &&) , 
		\label{eq:type B} \tag{B} \\
		&d \geq 2,  \quad&& s > 1 , \quad&& ( w(r), \psi(r) ) = ( &&(1+r^2)^{-s/2}, \quad&&r^{1/2} &&) .
		\label{eq:type C} \tag{C}
	\end{alignat}
	The case \eqref{eq:type A} is given by \citet[Theorem 2]{KY1989}.
	\citet[Theorem 1, Remarks (a)]{KY1989} also proved the case \eqref{eq:type B} with $d=2$, $1 < s < 2$ and $d \geq 3$, $1 < s \leq 2$.
	See \citet[Theorem 1.1]{Sug1998} for the case \eqref{eq:type B} with the full range $1 < s < d$.
	The case \eqref{eq:type C} is by \citet[Theorem 1.(b)]{BK1992} ($d \geq 3$) and \citet[Theorem 1.1]{Chi2002} ($d \geq 2$).
	Furthermore, the ranges $s \geq 2$ in \eqref{eq:type A}, $1 < s < d$ in \eqref{eq:type B}, and $s > 1$ in \eqref{eq:type C} are sharp; 
	see \citet[Theorem 2.1.(b), Theorem 2.2.(b)]{Wal1999} for \eqref{eq:type A}, 
	\citet[Theorem 2]{Vil2001} for \eqref{eq:type B}, and \citet[Theorem 2.14.(b)]{Wal2000} for \eqref{eq:type C}.  
	
	\citet*{BS2017,BSS2015} determined the explicit values of the optimal constant and the existence of extremisers in these cases as follows:
	\begin{quotetheorem}[{\cite[Theorems 1.6, 1.7]{BS2017}, \cite[Theorem 1.4]{BSS2015}}] \label{thm:Schrodinger explicit value}\phantom{a}
		\begin{description}
			\item[\rm{\cite[Theorem 1.7]{BS2017}}]
			In the case \eqref{eq:type A} with $s = 2$, we have
			\begin{equation}
				C_{d}((1+r^2)^{-1} , (1+r^2)^{1/4} , r^2) = \begin{cases}
					\pi , & d = 3 , \\
					\pi / 2 , & d \geq 5 ,
				\end{cases} 
			\end{equation}
			and extremisers do not exist.
			\item[\rm{\cite[Theorem 1.6]{BS2017}}]
			In the case \eqref{eq:type B}, we have
			\begin{equation}
				C_{d}( r^{-s} , r^{(2-s)/2} , r^2) = 2^{1-s} \pi \frac{ \Gamma(s-1) \Gamma((d-s)/2) }{ ( \Gamma(s/2) )^2 \Gamma( (d+s)/2 - 1 ) } , 
			\end{equation}
			and $f \in L^2(\R^d) \setminus \{0\}$ is an extremiser if and only if $f$ is radial.
			In particular, we have
			\begin{equation}
				C_{d}( r^{-2} , 1 , r^2) = \frac{\pi}{d-2} 
			\end{equation}
			when $d \geq 3$ and $s = 2$, 
			which recovers \citeauthor{Sim1992}'s result \cite[(3)]{Sim1992}.
			\item[\rm{\cite[Corollary 1.5]{BSS2015}}]
			In the case \eqref{eq:type C}, we have
			\begin{equation}
				C_{d}( (1+r^2)^{-s/2} , r^{1/2} , r^2) = \pi^{1/2} \frac{ \Gamma( (s-1)/2 ) }{ 2 \Gamma(s) }
			\end{equation}
			whenever $d \geq 3$, and extremisers do not exist.
			In particular, we have
			\begin{equation}
				C_{d}( (1+r^2)^{-1} , r^{1/2} , r^2) = \frac{\pi}{2}
			\end{equation}
			when $d \geq 3$ and $s = 2$, which recovers \citeauthor{Sim1992}'s another result \cite[(2)]{Sim1992}.
		\end{description}
	\end{quotetheorem}
	Note that one can interpret these results for Schr\"{o}dinger equations as for relativistic Schr\"{o}dinger and Dirac equations via Corollary \ref{cor:Dirac Schrodinger equivalence}. 
	For example, the result for the case \eqref{eq:type A} is equivalent to
	\begin{equation}
		C_{d}((1+r^2)^{-1},1,(1+r^2)^{1/2}) = \begin{cases}
			2 \pi , & d = 3 , \\
			\pi , & d \geq 5 ,
		\end{cases}  
	\end{equation}
	this gives the optimal constant of the inequality for the relativistic Schr\"{o}dinger equation established by \citet[Theorem 3A, Corollary 3.1]{BN1997}.
	We can also improve a recent result for the Dirac equation established \citet[Theorem 7.1]{BU2021}, which states that the smoothing estimate for the Dirac equation holds if 
	\begin{alignat}{8}
		&d = 3,   \quad && s > 2 , \quad && m > 0 , \quad &&  ( w(r), \psi(r) ) = ( &&(1+r^2)^{-s/2}, \quad&& 1 &&) .
	\end{alignat}
	\begin{corollary} \label{cor:type A Dirac}
		The smoothing estimate for the Dirac equation holds in the following case:
		\begin{alignat}{8}
			&d \geq 3,   \quad && s \geq 2 , \quad && m \geq 0 , \quad &&  ( w(r), \psi(r) ) = ( &&(1+r^2)^{-s/2}, \quad&& \phi_m(r)^{-1/2} (1 + r^2)^{1/4} &&) ,
			\label{eq:type A Dirac} \tag{$\widetilde{\textrm{A}}$}
		\end{alignat}
		Here, the range $s \geq 2$ in \eqref{eq:type A Dirac} is sharp.
		Furthermore, we have
		\begin{equation}
			\pi \leq \widetilde{C}_{3}((1+r^2)^{-1} , \phi_m(r)^{-1/2} (1 + r^2)^{1/4} , m) \leq 2 \pi 
		\end{equation}
		when $d = 3$, and 
		\begin{equation}
			\widetilde{C}_{d}((1+r^2)^{-1} , \phi_m(r)^{-1/2} (1 + r^2)^{1/4} , m) \leq \pi
		\end{equation}
		when $d \geq 5$.
	\end{corollary}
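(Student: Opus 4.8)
The plan is to deduce all three assertions from Corollary~\ref{cor:Dirac Schrodinger equivalence} together with the scaling identity $C_{d}(w,\psi,\phi_m) = 2\,C_{d}(w,\phi_m^{1/2}\psi,r^2)$ recorded after it --- an identity which itself follows from Theorem~\ref{thm:Schrodinger} by a one-line comparison of the quantity $\lambda_k$ in \eqref{eq:lambda_k} for the dispersion relations $\phi_m$ and $r^2$, using $\abs{\phi_m'(r)} = r/\phi_m(r)$ (the prefactor $r^{d-1}\psi(r)^2/\abs{\phi'(r)}$ for $(\phi_m,\psi)$ is exactly twice the one for $(r^2,\phi_m^{1/2}\psi)$). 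The key elementary observation is that for the pair $(w,\psi)$ appearing in \eqref{eq:type A Dirac} one has $\phi_m(r)^{1/2}\psi(r) = (1+r^2)^{1/4}$, so that $C_{3}(w,\psi,\phi_m) = 2\,C_{3}((1+r^2)^{-s/2},(1+r^2)^{1/4},r^2)$. Combined with Corollary~\ref{cor:Dirac Schrodinger equivalence}, this traps the Dirac optimal constant:
\begin{equation}
	C_{3}\bigl((1+r^2)^{-s/2},(1+r^2)^{1/4},r^2\bigr) \leq \widetilde{C}_{3}\bigl((1+r^2)^{-s/2},\phi_m(r)^{-1/2}(1+r^2)^{1/4},m\bigr) \leq 2\,C_{3}\bigl((1+r^2)^{-s/2},(1+r^2)^{1/4},r^2\bigr),
\end{equation}
i.e.\ between the Schr\"odinger optimal constant in case \eqref{eq:type A} and twice that constant.

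From the upper half of this trapping the first assertion is immediate: for $s \geq 2$ the Schr\"odinger constant $C_{3}((1+r^2)^{-s/2},(1+r^2)^{1/4},r^2)$ is finite by \citet[Theorem~2]{KY1989}, which is precisely case \eqref{eq:type A}, and hence $\widetilde{C}_{3} < \infty$; this is the smoothing estimate \eqref{eq:type A Dirac}.

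From the lower half I would read off sharpness: for $s < 2$ the Schr\"odinger constant $C_{3}((1+r^2)^{-s/2},(1+r^2)^{1/4},r^2)$ is infinite, since the range $s \geq 2$ in \eqref{eq:type A} is known to be sharp by \citet[Theorem~2.2.(b)]{Wal1999}; therefore $\widetilde{C}_{3} = \infty$ when $s < 2$, so the range $s \geq 2$ in \eqref{eq:type A Dirac} cannot be enlarged.

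Finally I would specialise to $s = 2$ and invoke \citet[Theorem~1.7]{BS2017} (quoted in Theorem~\ref{thm:Schrodinger explicit value}), which gives $C_{3}((1+r^2)^{-1},(1+r^2)^{1/4},r^2) = \pi$; the displayed trapping then becomes exactly $\pi \leq \widetilde{C}_{3}((1+r^2)^{-1},\phi_m(r)^{-1/2}(1+r^2)^{1/4},m) \leq 2\pi$. The argument has no real obstacle: it is essentially bookkeeping, and the only points deserving a little care are the verification of the scaling identity from \eqref{eq:lambda_k} and checking that the quoted Schr\"odinger results are used with the normalisation of the Fourier transform fixed in this paper.
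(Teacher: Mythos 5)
Your argument is correct and is essentially the paper's own reasoning (the paper gives no explicit proof, just the remark that the corollary follows by combining Corollary~\ref{cor:Dirac Schrodinger equivalence} with case \eqref{eq:type A}): you sandwich $\widetilde{C}_3$ between $C_3((1+r^2)^{-s/2},(1+r^2)^{1/4},r^2)$ and twice that constant via the observation $\phi_m^{1/2}\psi=(1+r^2)^{1/4}$, then read off finiteness, sharpness, and the numerical bounds from the quoted Schr\"odinger results. One small note: the third argument in the displayed inequality of the corollary should be $m$ rather than $r^2$ (a typo in the paper), and your write-up correctly uses $m$.
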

	Moreover, in the cases \eqref{eq:type B} and \eqref{eq:type C}, we can determine the explicit value of $\widetilde{C}_{d}(w,\phi_{m}^{-1/2} \psi,m) $ using Theorems \ref{thm:radial Dirac} and \ref{thm:3D Dirac intro}.
	\begin{theorem} \label{thm:type B Dirac}
		The smoothing estimate for the Dirac equation holds in the following case:
		\begin{alignat}{8}
			&d \geq 2, \quad && 1 < s < d, \quad&& m \geq 0 , \quad && ( w(r), \psi(r) ) = ( &&r^{-s}, \quad&&\phi_{m}(r)^{-1/2} r^{(2-s)/2} &&) .
			\label{eq:type B Dirac} \tag{$\widetilde{\textrm{B}}$}
		\end{alignat}			
		Here, the range $1 < s < d$ in \eqref{eq:type B Dirac} is sharp.
		Furthermore, we have
		\begin{align}
			\widetilde{C}_{d}(r^{-s},\phi_{m}(r)^{-1/2} r^{(2-s)/2},m) 
			&= \begin{cases}
				2 C_{d}( r^{-s} , r^{(2-s)/2} , r^2) , & d \geq 2, \, m > 0 , \\
				2 \mleft( 1 - \dfrac{s - 1}{ d + s - 2 }  \mright) C_{d}( r^{-s} , r^{(2-s)/2} , r^2) , & d = 2, 3, \, m = 0 
			\end{cases} \\
			&= \begin{cases}
				2^{2-s} \pi \dfrac{ \Gamma(s-1) \Gamma((d-s)/2) }{ ( \Gamma(s/2) )^2 \Gamma( (d+s)/2 - 1 ) }, &  d \geq 2, \, m > 0 , \\
				\mleft( 1 - \dfrac{s - 1}{ d + s - 2 }  \mright) 2^{2-s} \pi \dfrac{ \Gamma(s-1) \Gamma((d-s)/2) }{ ( \Gamma(s/2) )^2 \Gamma( (d+s)/2 - 1 ) },  & d = 2, 3, \, m = 0 ,
			\end{cases}
		\end{align}
		and extremisers exist if $d=2, 3$ and $m = 0$, and do not if $d=2, 3$ and $m > 0$.
	\end{theorem}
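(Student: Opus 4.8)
The overall plan is to reduce to the known Schr\"odinger estimates via Theorems~\ref{thm:Schrodinger}, \ref{thm:2D Dirac intro} and \ref{thm:3D Dirac intro}, exploiting the scale invariance of the weights in \eqref{eq:type B Dirac}. The first thing to record is that for these weights the profiles $\lambda_k(r)$ of \eqref{eq:lambda_k} attached to $\phi=\phi_m$ are independent of $r$: since $\widehat{\abs{\variabledot}^{-s}}(\xi)$ is a positive constant times $\abs\xi^{s-d}$, $F_w(\rho)$ is a positive constant times $\rho^{(s-d)/2}$, while $r^{d-1}\psi(r)^2/\abs{\phi_m'(r)}=r^{d-s}$ and $F_w(r^2(1-t))$ carries a factor $r^{s-d}$, so the powers of $r$ cancel and
\begin{equation}
\lambda_k=c_{d,s}\int_{-1}^{1}(1-t)^{(s-3)/2}(1+t)^{(d-3)/2}p_{d,k}(t)\,dt
\end{equation}
for an explicit $c_{d,s}>0$. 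The same computation with $\phi=r^2$ and smoothing function $r^{(2-s)/2}$ gives constants $\mu_k$ with $\lambda_k=2\mu_k$, and $\mu_0=(2\pi)^{d-1}C_d(r^{-s},r^{(2-s)/2},r^2)$ by Theorem~\ref{thm:Schrodinger} together with the monotonicity below. Since $p_{d,0}\equiv1$ and $p_{d,1}(t)=t$, writing $t=\tfrac12[(1+t)-(1-t)]$ and using $B(p,q+1)=\tfrac{q}{p+q}B(p,q)$ with $p=\tfrac{s-1}{2}$, $q=\tfrac{d-1}{2}$ gives $\lambda_1/\lambda_0=\mu_1/\mu_0=\tfrac{d-s}{d+s-2}$; more generally the relevant Jacobi-polynomial integral evaluates to $\mu_k/\mu_0=\bigl((d-s)/2\bigr)_k/\bigl((d+s-2)/2\bigr)_k$, which is strictly decreasing in $k$ when $1<s<d$, so $\lambda_0>\lambda_1>\lambda_2>\dotsb>0$.

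Next I would prove two inequalities valid for all $d\ge2$ and $m\ge0$. With the energy projections $P_\pm=\tfrac12\bigl(I\pm\phi_m(\abs D)^{-1}H_m\bigr)$ one has $\psi(\abs D)e^{-itH_m}f=\psi(\abs D)e^{-it\phi_m(\abs D)}P_+f+\psi(\abs D)e^{it\phi_m(\abs D)}P_-f$, and for each fixed $x$ the two terms have $t$-Fourier transforms supported in $(-\infty,-m]$ and $[m,\infty)$; hence integration in $t$ annihilates the cross term and, after $t\mapsto-t$ in the second piece,
\begin{equation}
\int_{t\in\R}\!\int_{x\in\R^d}\abs{\psi(\abs D)e^{-itH_m}f}^2w(\abs x)\,dx\,dt=\sum_{\pm}\int_{t\in\R}\!\int_{x\in\R^d}\abs{\psi(\abs D)e^{-it\phi_m(\abs D)}P_\pm f}^2w(\abs x)\,dx\,dt .
\end{equation}
Bounding each summand by the Schr\"odinger estimate and using $\norm{P_+f}^2+\norm{P_-f}^2=\norm f^2$ gives $\widetilde C_d(w,\psi,m)\le C_d(w,\psi,\phi_m)=2C_d(r^{-s},r^{(2-s)/2},r^2)$, which is finite precisely when $1<s<d$ by Theorem~\ref{thm:Schrodinger explicit value}; this settles that the estimate holds on the claimed range. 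Applying the same splitting to $F_n=f_nv$, where $f_n$ is a scalar near-extremiser of the Schr\"odinger estimate and $v$ is a fixed unit vector, at least one of the two resulting integrals is $\gtrsim n\norm{F_n}^2$ and equals $\int\!\int\abs{\psi(\abs D)e^{-itH_m}P_\pm F_n}^2w(\abs x)$, so $\widetilde C_d=\infty$ whenever $C_d(w,\psi,\phi_m)=\infty$; with the known sharpness of $1<s<d$ for the Schr\"odinger equation this proves the range in \eqref{eq:type B Dirac} is sharp.

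For $m>0$ the matching (sharp) lower bound comes from a low-frequency test. Take a radial $g_\rho$ with $\widehat{g_\rho}$ supported in $\{\abs\xi\le\rho\}$; since $\lambda_k(r)\equiv\lambda_k$, Theorems~\ref{thm:Schrodinger} and \ref{thm:Schrodinger explicit value} show that \emph{every} radial function is an extremiser of the Schr\"odinger estimate (for $\phi=\phi_m$ as for $\phi=r^2$). Choose a unit $v$ with $\beta v=v$ and set $F_\rho=g_\rho v$; then $P_-(\xi)v=O_m(\abs\xi)$ near $\xi=0$, so $\norm{P_-(\abs D)F_\rho}\lesssim_m\rho\norm{F_\rho}$ and $\psi(\abs D)e^{-itH_m}F_\rho$ differs from $\psi(\abs D)e^{-it\phi_m(\abs D)}F_\rho$ by an $O_m(\rho)$ error in $L^2(w(\abs x)\,dx\,dt)$; letting $\rho\to0$ gives $\widetilde C_d\ge C_d(w,\psi,\phi_m)$, hence equality $\widetilde C_d=2C_d(r^{-s},r^{(2-s)/2},r^2)$. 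Extremisers cannot exist when $m>0$: equality in the projection bound forces $P_+F$ and $P_-F$ to be Schr\"odinger extremisers, i.e.\ radial, but a nonzero radial $R$ cannot lie in the range of $P_\pm(\abs D)$ since $H_mR=\pm\phi_m(\abs D)R$ with $\widehat R(\xi)=h(\abs\xi)$ forces $(\alpha\cdot\omega)h(r)$ to be independent of $\omega\in\S^{d-1}$ (hence, averaging over $\omega$, $(\alpha\cdot\omega)h(r)\equiv0$ and $h\equiv0$). For $d=2,3$ and $m>0$ this case also follows directly from Theorems~\ref{thm:2D Dirac intro} and \ref{thm:3D Dirac intro}, since there $\Diraclambda_k(r)<\lambda_k\le\lambda_0$ for every $r>0$, so $\Diraclambda^*=\lambda_0$ is attained nowhere.

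It remains to treat $m=0$ with $d=2,3$, where Theorems~\ref{thm:2D Dirac intro} and \ref{thm:3D Dirac intro} apply: at $m=0$ we have $\Diraclambda_k(r)=\tfrac12(\lambda_k+\lambda_{k+1})$, independent of $r$ and strictly decreasing in $k$, so $\Diraclambda^*=\tfrac12(\lambda_0+\lambda_1)$ and $\set{r>0}{\Diraclambda_0(r)=\Diraclambda^*}=(0,\infty)$ has positive (indeed infinite) Lebesgue measure, giving existence of extremisers and
\begin{equation}
(2\pi)^{d-1}\widetilde C_d(r^{-s},\phi_m(r)^{-1/2}r^{(2-s)/2},0)=\tfrac12(\lambda_0+\lambda_1)=\Bigl(1+\tfrac{d-s}{d+s-2}\Bigr)\mu_0=\tfrac{2(d-1)}{d+s-2}(2\pi)^{d-1}C_d(r^{-s},r^{(2-s)/2},r^2),
\end{equation}
i.e.\ $\widetilde C_d=2\bigl(1-\tfrac{s-1}{d+s-2}\bigr)C_d(r^{-s},r^{(2-s)/2},r^2)$. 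Substituting the value of $C_d(r^{-s},r^{(2-s)/2},r^2)$ from Theorem~\ref{thm:Schrodinger explicit value} into the two cases yields the asserted $\Gamma$-function formulas. The main obstacle, and the only step not already packaged in the earlier theorems, is the sharp lower bound for $m>0$ in dimensions $d\ge4$ (not covered by Theorem~\ref{thm:3D Dirac intro}): one must be sure that scale invariance really makes \emph{all} radial functions extremal for the Schr\"odinger estimate, so that extremisers with arbitrarily small frequency support are available to annihilate the negative-energy component of the test function as $\rho\to0$.
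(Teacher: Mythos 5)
Your proposal is correct, and it reaches the same numerical conclusions as the paper via a partly different route. The computations of the scale-invariant $\lambda_k$, the ratio $\lambda_1/\lambda_0 = (d-s)/(d+s-2)$, and the $m=0$, $d=2,3$ case (via Theorems~\ref{thm:2D Dirac intro} and~\ref{thm:3D Dirac intro}) match the paper, which instead quotes Lemma~\ref{lem:BS 1.6} from \cite{BS2017} for the explicit $c_k$ and their monotonicity. The real divergence is in how you obtain the sharp lower bound $\widetilde C_d \ge C_d(w,\psi,\phi_m)$ for $d\ge 4$ and $m>0$, and in how you rule out extremisers for $m>0$. The paper squeezes $\widetilde C_d$ between $\sup_r \Diraclambda_{\rad}(r)/(2\pi)^{d-1}$ from the radial-data Theorem~\ref{thm:radial Dirac} (quoted from \cite{IkS2023}) and $2C_d$ from Corollary~\ref{cor:Dirac Schrodinger equivalence}, then evaluates $\Diraclambda_\rad$ directly; you instead construct low-frequency radial test data $F_\rho = g_\rho v$ with $\beta v = v$, exploiting $P_-(\xi)v = O_m(\abs\xi)$ so the negative-energy part becomes negligible as $\rho\to0$, which gives the same lower bound without invoking \cite{IkS2023}. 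This is a genuinely self-contained alternative; the paper's route is shorter but outsources the radial lower bound. For non-existence of extremisers with $m>0$ the paper (for $d=2,3$) reads it off from the set $\DiracL_k$ being empty, while you give a structural argument — equality would force $P_\pm F$ to be radial Schr\"odinger extremisers, yet no nonzero radial function lies in the range of $P_\pm(\abs D)$, because $(\alpha\cdot\omega)h(r)$ cannot be independent of $\omega$ unless $h\equiv 0$ — which in fact applies in every dimension $d\ge2$, slightly more than what the theorem asserts. One small point to watch: your cross-term-vanishing step for the upper bound is really the proof of the right inequality in Corollary~\ref{cor:Dirac Schrodinger equivalence}, which the paper states only for $d=2,3$ but uses (correctly, since the argument is dimension-free) also at $d\ge4$; your writeup makes that independence of $d$ explicit, which is a modest improvement in transparency.
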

	\begin{theorem} \label{thm:type C Dirac}
		The smoothing estimate for the Dirac equation holds in the following case:
		\begin{alignat}{8}
			&d \geq 2, \quad && s>1, \quad&& m \geq 0 , \quad && ( w(r), \psi(r) ) = ( &&(1+r^2)^{-s/2}, \quad&&\phi_{m}(r)^{-1/2} r^{1/2} &&) .
			\label{eq:type C Dirac} \tag{$\widetilde{\textrm{C}}$}
		\end{alignat}			
		Here, the range $s > 1$ in \eqref{eq:type C Dirac} is sharp.
		Furthermore, we have
		\begin{align}
			\widetilde{C}_{d}((1+r^2)^{-s/2},\phi_{m}(r)^{-1/2} r^{1/2},m) 
			&= 
			2 C_{d}((1+r^2)^{-s/2},r^{1/2},r^2)  \\
			&= \pi^{1/2} \frac{ \Gamma( (s-1)/2 ) }{ \Gamma(s) } 
		\end{align}
		whenever $d \geq 3$.
	\end{theorem}
	Recall that in the case \eqref{eq:type B}, the smoothing estimate for the Schr\"{o}dinger equation has extremisers.
	Nevertheless, in the case \eqref{eq:type B Dirac} with $d=2,3$ and $m > 0$, the smoothing estimate for the Dirac equation does not have extremisers.
	Which means that the existence of extremisers for the smoothing estimate for the Dirac equation with $(w, \phi_{m}^{-1/2} \psi)$ is 
	not equivalent to that for the Schr\"{o}dinger equation with $(w, \psi)$, even though the smoothing estimates themselves are equivalent.
	
	In addition, the case \eqref{eq:type B Dirac} with $m > 0$ shows that the first inequality in Corollary \ref{cor:Dirac Schrodinger equivalence} is sharp. 
	Furthermore, in the case \eqref{eq:type B Dirac} with $d=2, 3$ and $m = 0$, we have
	\begin{equation}
		\lim_{s \uparrow d} \frac{ \widetilde{C}_{d}(r^{-s},\phi_{m}(r)^{-1/2} r^{(2-s)/2},m)  }{ C_{d}( r^{-s} , r^{(2-s)/2} , r^2) }
		= \lim_{s \uparrow d} 2 \mleft( 1 - \frac{s - 1}{ d + s - 2 }  \mright)  
		= 1 ,
	\end{equation}
	which shows that the second inequality in Corollary \ref{cor:Dirac Schrodinger equivalence} is also sharp. 
	\subsection*{Organization of the paper}
	In Section \ref{section:preliminaries}, we introduce some notation and basic facts as preliminaries.
	In particular, Lemma \ref{lem:Sf norm decomposition Dirac}, which follows from the spherical harmonics decomposition (Theorem \ref{thm:spherical harmonics decomposition}) and the Funk--Hecke theorem (Theorem \ref{thm:Funk-Hecke}), plays an important role.
	Though Lemma \ref{lem:Sf norm decomposition Dirac} holds for any orthonormal bases of spherical harmonics, we will choose a certain basis to avoid a tedious calculation.
	To illustrate our idea, we give a simplified proof of Theorem \ref{thm:2D Dirac intro} (the optimal constant in the case $d = 2$) in Section \ref{section:proof for 2D}.
	We will see that expressing the spherical harmonics decomposition of $f \in L^2(\R^2, \C^2)$ as
	\begin{equation}
		f(\xi) = r^{-1/2} \sum_{k=-\infty}^{\infty}
		\frac{1}{\sqrt{2\pi}} \begin{pmatrix}
			e^{i k \theta} & 0 \\ 
			0 & e^{i (k+1) \theta}
		\end{pmatrix} \fk{k}(r) , \quad \xi = (r \cos \theta, r \sin \theta) 
	\end{equation}
	instead of as the usual form (which is used by \citet{Iko2022}), 
	\begin{equation}
		f(\xi) = r^{-1/2} \sum_{k=-\infty}^{\infty}
		\frac{1}{\sqrt{2\pi}} e^{i k \theta} \fk{k}(r) , \quad \xi = (r \cos \theta, r \sin \theta)  ,
	\end{equation}
	simplifies the proof significantly.
	The main advantage of using
	\begin{equation}
		\matrixEk{k}(\theta)
		\coloneqq 
		\frac{1}{\sqrt{2\pi}} \begin{pmatrix}
			e^{i k \theta} & 0 \\ 
			0 & e^{i (k+1) \theta}
		\end{pmatrix} 
	\end{equation}
	is the following identity:
	\begin{align}
		\mleft( \sigma_1 \cos{\theta}
		+ 
		\sigma_2 \sin{\theta} 
		\mright)
		\matrixEk{k}(\theta)
		&= 
		\matrixEk{k}(\theta) \sigma_1 ,\\
		\sigma_3 \matrixEk{k}(\theta) &= \matrixEk{k}(\theta) \sigma_3 ,
	\end{align} 
	where $\sigma_1, \sigma_2, \sigma_3$ are the Pauli matrices.
	In Section \ref{section:proof for 3D}, we first 
	give a certain expression of the spherical harmonic 
	decomposition of $f \in L^2(\R^3, \C^2)$, 
	\begin{equation}
		f(\xi) =  r^{-1} \sum_{k = 0}^{\infty} \sumnk 
		\matrixEkn{k}{n}(\theta, \varphi)
		\fkn{k}{n}(r) ,
		\quad \xi = (r \sin \theta \cos \varphi , r \sin \theta \sin \varphi, \cos \theta) , 
	\end{equation}
	where $\matrixEkn{k}{n}(\theta, \varphi)$ satisfies
	\begin{align}
		\sigma_1 \otimes \mleft( \sigma_1 \sin{\theta} \cos{\varphi} 
		+ 
		\sigma_2 \sin{\theta} \sin{\varphi} 
		+ \sigma_3 \cos{\theta} \mright) 
		\matrixEkn{k}{n}(\theta, \varphi)
		&= \matrixEkn{k}{n}(\theta, \varphi) 
		(\sigma_1 \otimes \matrixI) , \\
		(\sigma_3 \otimes \matrixI) \matrixEkn{k}{n}(\theta, \varphi) &= \matrixEkn{k}{n}(\theta, \varphi) (\sigma_3 \otimes \sigma_3) .	 
	\end{align}
	Once it is established, the proof of our main result Theorem \ref{thm:3D Dirac intro} is similar to that of Theorem \ref{thm:2D Dirac intro} given in Section \ref{section:proof for 2D}.
	Finally, we prove Theorems \ref{thm:type B Dirac} and \ref{thm:type C Dirac} in Section \ref{section:explicit value}.
	\section{Preliminaries} \label{section:preliminaries}
	We define the linear operator $\DiracS \colon L^2(\R^d, \C^N) \to L^2(\R^{d+1}, \C^N)$ by 
	\begin{equation}
		(\DiracS f)(x,t) \coloneqq w(\abs{x})^{1/2} \int_{\xi \in \R^d} e^{ix \cdot \xi}\psi(\abs{\xi})e^{-itA_{\xi}}f(\xi)\, d\xi ,
	\end{equation}
	where
	\begin{equation}
		A_\xi \coloneqq \alpha\cdot \xi + m\beta = \sum_{j=1}^{d}\alpha_j \xi_j + m\beta .
	\end{equation}
	Notice that the smoothing estimate \eqref{eq:smoothing Dirac} is equivalent to 
	\begin{equation}
		\norm{\DiracS \hat{f}}_{L^2(\R^{d+1}, \C^N)}^2 \leq C \norm{f}_{L^2(\R^d)}^2 = (2 \pi)^d C \norm{\hat{f}}_{L^2(\R^d, \C^N)}^2 ,
	\end{equation}
	and so that $\norm{\DiracS}_{L^2(\R^d, \C^N) \to L^2(\R^{d+1}, \C^N)}^2 = (2 \pi)^d C_{d}(w,\psi,m)$.
	For simplicity, hereinafter we write $\norm{\DiracS} \coloneqq \norm{\DiracS}_{L^2(\R^d, \C^N) \to L^2(\R^{d+1}, \C^N)}$.
	
	We note 
	that $\norm{\DiracS}$ is independent of the choice of $\alpha_1, \alpha_2, \ldots, \alpha_d$, $\alpha_{d+1} = \beta$.
	To see this, let $\{ \alpha_j^{(1)} \}_{j=1}^{d+1}$ and $\{ \alpha_j^{(2)} \}_{j=1}^{d+1}$ be $N\times N$ Hermitian matrices satisfying the anti-commutation relation.
	By the so-called fundamental theorem on Dirac gamma matrices, there exists a unitary matrix $U$ satisfying
	$\alpha_j^{(2)} = U^{-1} \alpha_j^{(1)} U$. 
	Therefore, we have
	\begin{align}
		\DiracS^{(2)} f(x,t) 
		&= w(\abs{x})^{1/2} \int_{\R^d} e^{ix \cdot \xi}\psi(\abs{\xi})e^{-itA_{\xi}^{(2)}}f(\xi)\, d\xi \\
		&= U^{-1} w(\abs{x})^{1/2} \int_{\R^d} e^{ix \cdot \xi}\psi(\abs{\xi})e^{-itA_{\xi}^{(1)}} Uf(\xi)\, d\xi \\
		&= U^{-1} \DiracS^{(1)} Uf(x,t) 
	\end{align}
	and so that
	\begin{equation}
		\norm{\DiracS^{(2)} f}_{L^2(\R^{d+1}, \C^N)} = \norm{\DiracS^{(1)} Uf}_{L^2(\R^{d+1}, \C^N)},
	\end{equation}
	which shows the independence.
	In this paper, we will use
	\begin{equation}
		\alpha_1 = \sigma_1 =
		\begin{pmatrix}
			0 & 1 \\
			1 & 0
		\end{pmatrix}, \quad
		\alpha_2 = \sigma_2 =
		\begin{pmatrix}
			0 & -i \\
			i & 0
		\end{pmatrix}, \quad
		\beta = \sigma_3 =
		\begin{pmatrix}
			1 & 0 \\
			0 & -1
		\end{pmatrix}
	\end{equation}
	in the case $d=2$, and 
	\begin{equation}
		\alpha_j = \sigma_1 \otimes \sigma_j =
		\begin{pmatrix}
			\matrixO & \sigma_j\\
			\sigma_j & \matrixO
		\end{pmatrix}, \quad 
		\beta = \sigma_3 \otimes \matrixI =
		\begin{pmatrix}
			\matrixI & \matrixO \\
			\matrixO & - \matrixI
		\end{pmatrix} 
	\end{equation}
	in the case $d=3$, where
	\begin{equation}
		\matrixO = \begin{pmatrix}
			0 & 0 \\
			0 & 0
		\end{pmatrix} , \quad 
		\matrixI = 
		\begin{pmatrix}
			1 & 0 \\
			0 & 1
		\end{pmatrix} .
	\end{equation}
	
	In order to compute $\norm{\DiracS f}_{L^2(\R^{d+1}, \C^N)}$, we will use the spherical harmonics decomposition (Theorem \ref{thm:spherical harmonics decomposition}) and the Funk--Hecke theorem (Theorem \ref{thm:Funk-Hecke}). 
	For each $k \in \N$, let $\HP_{k}(\R^d)$ and $\{ \Pkn{k}{n} \}_{1 \leq n \leq d_k}$ be the space of homogeneous harmonic polynomials of degree $k$ on $\R^d$ and its orthonormal basis, respectively.
	Here, 
	the inner product of $P, Q \in \HP_{k}(\R^d)$ is defined by
	\begin{equation}
		\innerproduct{P}{Q}_{\HP_{k}(\R^d)}
		\coloneqq \innerproduct{ \restr{P}{\S^{d-1}} }{ \restr{Q}{\S^{d-1}} }_{L^2(\S^{d-1})}
		= \int_{ \theta \in \S^{d-1} } P(\theta) \conjugate{Q(\theta)} \, d\sigma(\theta) ,
	\end{equation}
	as usual.
	The spherical harmonics decomposition and the Funk--Hecke theorem are as follows:
	\begin{quotetheorem} \label{thm:spherical harmonics decomposition}
		For any $f \in L^2(\R^d)$, there uniquely exists $\{ \fkn{k}{n} \}_{ k \in \N, 1 \leq n \leq d_k } \subset L^2(\positiveR)$ satisfying
		\begin{gather}
			f(\xi) = \abs{\xi}^{-(d-1)/2} \sum_{k=0}^{\infty} \sum_{n=1}^{d_k} \Pkn{k}{n}(\xi / \abs{\xi}) \fkn{k}{n}(\abs{\xi}) , \label{eq:spherical harmonics decomposition} \\
			\norm{f}_{L^2(\R^d)}^2 = \sum_{k=0}^{\infty} \sum_{n=1}^{d_k}  \norm{\fkn{k}{n}}_{L^2(\positiveR)}^2 . \label{eq:spherical harmonics decomposition norm}
		\end{gather}
		Conversely, for any $\{ \fkn{k}{n} \}_{ k \in \N, 1 \leq n \leq d_k } \subset L^2(\positiveR)$ satisfying
		\begin{equation}
			\sum_{k=0}^{\infty} \sum_{n=1}^{d_k}  \norm{\fkn{k}{n}}_{L^2(\positiveR)}^2 < \infty ,
		\end{equation}
		the function $f$ given by \eqref{eq:spherical harmonics decomposition} is in $L^2(\R^d)$ and \eqref{eq:spherical harmonics decomposition norm} holds.
	\end{quotetheorem}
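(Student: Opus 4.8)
The plan is to reduce the statement to the classical completeness of spherical harmonics in $L^2(\S^{d-1})$ together with the polar-coordinates change of variables. Writing $\xi = r\omega$ with $r = \abs{\xi} > 0$ and $\omega = \xi/\abs{\xi} \in \S^{d-1}$, the identity $d\xi = r^{d-1}\,dr\,d\sigma(\omega)$ identifies $L^2(\R^d)$ isometrically with $L^2\bigl((0,\infty)\times\S^{d-1},\,r^{d-1}\,dr\,d\sigma\bigr)$. I would take as known --- citing a standard reference on spherical harmonics such as Stein--Weiss --- that $\bigcup_{k\in\N}\{\Pkn{k}{n}\}_{1\le n\le d_k}$ is a complete orthonormal system in $L^2(\S^{d-1})$: orthonormality holds because distinct $\HP_k(\R^d)$ are orthogonal eigenspaces of the Laplace--Beltrami operator on $\S^{d-1}$, and completeness follows from the Stone--Weierstrass theorem together with the fact that the restriction to $\S^{d-1}$ of any polynomial is a finite sum of restrictions of elements of the spaces $\HP_k(\R^d)$.

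First I would fix $f \in L^2(\R^d)$. By Tonelli's theorem the function $\omega \mapsto f(r\omega)$ lies in $L^2(\S^{d-1})$ for a.e.\ $r > 0$, and for such $r$ I set
\begin{equation}
	\fkn{k}{n}(r) \coloneqq r^{(d-1)/2}\int_{\S^{d-1}} f(r\omega)\,\conjugate{\Pkn{k}{n}(\omega)}\,d\sigma(\omega) ,
\end{equation}
which is a measurable function of $r$ by Fubini's theorem. Let $S_F(\xi) \coloneqq \abs{\xi}^{-(d-1)/2}\sum_{(k,n)\in F}\Pkn{k}{n}(\xi/\abs{\xi})\,\fkn{k}{n}(\abs{\xi})$ denote the partial sum over a finite index set $F$. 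Since $r^{-(d-1)/2}\fkn{k}{n}(r)$ is by construction the $(k,n)$-th Fourier coefficient of $\omega\mapsto f(r\omega)$ on $\S^{d-1}$, Parseval's identity on the sphere --- where the completeness of $\{\Pkn{k}{n}\}$ is used --- gives, for a.e.\ $r$,
\begin{equation}
	\int_{\S^{d-1}} \abs{f(r\omega) - S_F(r\omega)}^2\,d\sigma(\omega) = r^{-(d-1)}\sum_{(k,n)\notin F}\abs{\fkn{k}{n}(r)}^2 .
\end{equation}
Multiplying by $r^{d-1}$, integrating over $(0,\infty)$, and interchanging sum and integral by Tonelli (the summands being nonnegative) yields
\begin{equation}
	\norm{f - S_F}_{L^2(\R^d)}^2 = \sum_{(k,n)\notin F}\norm{\fkn{k}{n}}_{L^2(\positiveR)}^2 .
\end{equation}

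Taking $F=\emptyset$ gives the norm identity \eqref{eq:spherical harmonics decomposition norm}; in particular each $\fkn{k}{n}$ belongs to $L^2(\positiveR)$ and $\sum_{k,n}\norm{\fkn{k}{n}}_{L^2(\positiveR)}^2<\infty$. Letting $F$ exhaust the index set, the right-hand side above is the tail of a convergent series and tends to $0$, so $S_F\to f$ in $L^2(\R^d)$, which is precisely \eqref{eq:spherical harmonics decomposition}. Uniqueness is then immediate: if $\{\fkn{k}{n}\}$ and $\{g_k^n\}$ in $L^2(\positiveR)$ both represent $f$ through \eqref{eq:spherical harmonics decomposition}, the norm identity applied to their difference forces $\fkn{k}{n}=g_k^n$ in $L^2(\positiveR)$ for all $k,n$. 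For the converse direction, given any $\{\fkn{k}{n}\}\subset L^2(\positiveR)$ with $\sum_{k,n}\norm{\fkn{k}{n}}_{L^2(\positiveR)}^2<\infty$, the fiberwise Pythagorean identity integrated in $r$ gives $\norm{S_{F'}-S_F}_{L^2(\R^d)}^2=\sum_{(k,n)\in F'\setminus F}\norm{\fkn{k}{n}}_{L^2(\positiveR)}^2$ whenever $F\subset F'$, so the partial sums form a Cauchy net in $L^2(\R^d)$ and converge to some $f\in L^2(\R^d)$; since $\norm{S_F}_{L^2(\R^d)}^2=\sum_{(k,n)\in F}\norm{\fkn{k}{n}}_{L^2(\positiveR)}^2$, continuity of the norm yields \eqref{eq:spherical harmonics decomposition norm} for this $f$.

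I expect the only genuine work to be the measure-theoretic bookkeeping: the almost-everywhere-in-$r$ fiberwise statements, measurability of the coefficient functions $\fkn{k}{n}$, and the interchanges of summation with integration, all of which reduce to Tonelli and Fubini since the relevant integrands are nonnegative and the inner sums over $n$ are finite. The one substantive ingredient --- completeness of $\{\Pkn{k}{n}\}$ in $L^2(\S^{d-1})$ --- is entirely classical, and I would cite it rather than reprove it.
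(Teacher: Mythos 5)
The paper does not prove this statement: it is labeled as a quoted theorem (Theorem \ref{thm:spherical harmonics decomposition}) and is simply recorded as a classical fact, to be used in Lemma \ref{lem:Sf norm decomposition Dirac}, without proof or even a specific citation. Your argument is a correct and complete version of the standard proof---polar coordinates identify $L^2(\R^d)$ with $L^2((0,\infty)\times\S^{d-1}, r^{d-1}\,dr\,d\sigma)$, the fiberwise Parseval identity on the sphere (using completeness of $\bigcup_k \{\Pkn{k}{n}\}$ in $L^2(\S^{d-1})$) gives the Pythagorean identity, Tonelli handles the interchange of sum and integral, and the Cauchy-net argument handles the converse. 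The only point I would tidy is that in the converse direction you should say a word about why the limit $f$ of the Cauchy net actually has the prescribed $\fkn{k}{n}$ as its coefficients (i.e.\ that the representation \eqref{eq:spherical harmonics decomposition} holds for the limit, not merely that the net converges); this follows immediately from continuity of the coefficient functionals $f \mapsto \fkn{k}{n}$ and uniqueness, but is worth one sentence. Since the paper takes the result for granted, there is no ``paper's approach'' to compare against; your proof supplies exactly the omitted justification.
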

	\begin{quotetheorem} \label{thm:Funk-Hecke}
		Let $d \geq 2$, $k \in \N$, $F \in L^1 ( [-1, 1], (1-t^2)^{(d-3)/2} \, dt )$ and write
		\begin{equation} \label{eq:mu in Funk-Hecke}
			\mu_{k}[F] \coloneqq \measure{\S^{d-2}} \int_{-1}^1 F(t) p_{d, k}(t) 
			(1-t^2)^{\frac{d-3}{2}} \, dt.
		\end{equation} 
		Here recall that $p_{d, k}$ denotes the Legendre polynomial of degree $k$ in $d$ dimensions; 
		see \eqref{Legendre}.
		Then, for any $P \in \HP_k(\R^d)$ and $\omega \in \S^{d-1}$, we have
		\begin{equation} \label{eq:Funk-Hecke}
			\int_{\theta \in \S^{d-1}} F(\theta \cdot \omega) P(\theta) \, d\sigma(\theta) 
			= \mu_{k}[F] P(\omega) . 
		\end{equation}
	\end{quotetheorem}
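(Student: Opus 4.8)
The plan is to recognise the left-hand side of \eqref{eq:Funk-Hecke} as a rotation-invariant integral operator on the sphere and to exploit the symmetry via Schur's lemma. First I would fix $F$ as in the statement and introduce, for bounded measurable $g$ on $\S^{d-1}$, the operator $(T_F g)(\omega) \coloneqq \int_{\theta \in \S^{d-1}} F(\theta \cdot \omega) g(\theta) \, d\sigma(\theta)$ for $\omega \in \S^{d-1}$. Writing $\theta = t\omega + \sqrt{1-t^2}\,\zeta$ with $t \in [-1,1]$ and $\zeta \in \S^{d-2} \subset \omega^{\perp}$, the surface measure factors as $d\sigma(\theta) = (1-t^2)^{(d-3)/2}\, dt\, d\sigma(\zeta)$, so the integral converges absolutely and $\abs{(T_F g)(\omega)} \le \Norm[\infty]{g}\,\measure{\S^{d-2}} \int_{-1}^1 \abs{F(t)}(1-t^2)^{(d-3)/2}\, dt$ uniformly in $\omega$; in particular $T_F$ maps $L^{\infty}(\S^{d-1})$ into $L^{\infty}(\S^{d-1}) \subset L^2(\S^{d-1})$. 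A change of variables $\theta \mapsto R\theta$, using invariance of $\sigma$ and of the Euclidean inner product, then shows $T_F(g \circ R^{-1}) = (T_F g)\circ R^{-1}$ for every $R$ in the orthogonal group $\mathrm{O}(d)$.

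Next I would invoke the classical decomposition $L^2(\S^{d-1}) = \bigoplus_{k \ge 0} V_k$, where $V_k \coloneqq \set{\restr{P}{\S^{d-1}}}{P \in \HP_k(\R^d)}$ is an irreducible $\mathrm{O}(d)$-module (irreducible over $\C$ for all $d \ge 2$), and the $V_k$ are pairwise inequivalent, each being the $\bigl(-k(k+d-2)\bigr)$-eigenspace of the spherical Laplacian. By this multiplicity-freeness and Schur's lemma over $\C$, the space of $\mathrm{O}(d)$-equivariant linear maps $V_k \to L^2(\S^{d-1})$ is one-dimensional, spanned by the inclusion. Since $\restr{T_F}{V_k}$ is such a map — linear by construction, equivariant by the previous paragraph, and $L^2$-valued — there is a scalar $c_k(F)$ with $T_F P = c_k(F) P$ for every $P \in \HP_k(\R^d)$. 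Equivalently, $\int_{\theta\in\S^{d-1}} F(\theta\cdot\omega) P(\theta)\, d\sigma(\theta) = c_k(F) P(\omega)$ for all $\omega \in \S^{d-1}$, and it remains only to identify $c_k(F) = \mu_k[F]$.

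To compute the scalar I would test against a zonal harmonic. For a fixed $\eta \in \S^{d-1}$, the function $\theta \mapsto p_{d,k}(\theta\cdot\eta)$ is classically the restriction to $\S^{d-1}$ of a homogeneous harmonic polynomial of degree $k$ (the zonal harmonic with pole $\eta$), hence lies in $V_k$; moreover a short computation comparing the leading behaviour as $t \uparrow 1$ of the two sides of the Rodrigues formula \eqref{Legendre} gives the normalisation $p_{d,k}(1) = 1$. Applying the identity above with $P(\theta) = p_{d,k}(\theta\cdot\eta)$ and $\omega = \eta$, the right-hand side equals $c_k(F)\, p_{d,k}(1) = c_k(F)$, while the left-hand side, after the substitution $t = \theta\cdot\eta$ and the measure factorisation from the first paragraph, equals $\measure{\S^{d-2}} \int_{-1}^1 F(t) p_{d,k}(t)(1-t^2)^{(d-3)/2}\, dt = \mu_k[F]$. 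Comparing the two gives $c_k(F) = \mu_k[F]$, which is \eqref{eq:Funk-Hecke}.

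The main obstacle I anticipate is not any single computation but the reliance on the structural facts about spherical harmonics used in the second step — that the $V_k$ exhaust $L^2(\S^{d-1})$, are $\mathrm{O}(d)$-irreducible over $\C$, and are pairwise inequivalent — together with the assertion that $\theta \mapsto p_{d,k}(\theta\cdot\eta)$ genuinely belongs to $V_k$, not merely that it is a polynomial in $\theta\cdot\eta$ of degree $k$. Both are standard (see, e.g., Stein--Weiss, or Atkinson--Han), so I would cite them rather than reprove them; a fully self-contained treatment would instead require first developing the zonal-harmonic and addition-theorem machinery. A minor technical point is the absolute convergence and Fubini step underlying the manipulations, but this is harmless since the test functions $P$ are bounded on the compact sphere and $F$ is integrable against $(1-t^2)^{(d-3)/2}\,dt$.
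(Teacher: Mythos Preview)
Your proposal is a correct and standard route to the Funk--Hecke theorem, but note that the paper does not prove this statement at all: it is recorded in the \texttt{quotetheorem} environment precisely because it is a classical result, invoked in Section~\ref{section:preliminaries} as background alongside the spherical-harmonics decomposition. There is therefore no ``paper's own proof'' to compare against.

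On the substance of your argument: the Schur's-lemma approach you outline is one of the two textbook proofs (the other being a direct computation with the addition formula for zonal harmonics), and your sketch is sound. The only place I would tighten the wording is the step asserting that $\restr{T_F}{V_k}$ lands in $L^2(\S^{d-1})$ and is $\mathrm{O}(d)$-equivariant, hence by multiplicity-one must in fact land back in $V_k$; you have the ingredients, but making explicit that the image lies in the $V_k$-isotypic component (rather than merely in $L^2$) is what converts ``one-dimensional Hom space'' into ``scalar multiple of the identity on $V_k$''. Your identification of the scalar via the zonal harmonic $p_{d,k}(\theta\cdot\eta)$ with $p_{d,k}(1)=1$ is exactly the usual endgame.
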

	Note that the function $\lambda_k$ defined in \eqref{eq:lambda_k} satisfies
	\begin{equation}
		\lambda_{k}(r) 
		= \frac{r^{d-1}\psi(r)^2}{\abs{\phi^{\prime}(r)}} \mu_{k}[F_w( r^2 (1 - \variabledot) ) ] , 
	\end{equation}
	in other words, it satisfies 
	\begin{equation}
		\frac{r^{d-1}\psi(r)^2}{\abs{\phi^{\prime}(r)}} \int_{\theta \in \S^{d-1}} F_w( r^2(1 - \theta \cdot \omega) ) P(\theta) \, d\sigma(\theta) 
		= \lambda_k(r) P(\omega) 
	\end{equation}
	for each $k \in \N$, $P \in \HP_k(\R^d)$ and $\omega \in \S^{d-1}$.
	
	Using these facts, 
	we can obtain the following important lemma.
	\begin{quotelemma} \label{lem:Sf norm decomposition Dirac}
		Let $f \in L^2(\R^d, \C^N)$ and define $\fpm \in L^2(\R^d, \C^N)$ by
		\begin{equation}
			\fpm(\xi) \coloneqq \frac{1}{2} \mleft( f(\xi) \pm \frac{1}{\phi_{m}(\abs{\xi})} \bigg( m \beta f(\xi) + \sum_{j=1}^{d} \alpha_j \xi_j f(\xi) \bigg) \mright) .
		\end{equation}
		If
		\begin{equation}
			\fpm(\xi) = \abs{\xi}^{-(d-1)/2} \sum_{k=0}^{\infty} \sum_{n=1}^{d_k} \Pkn{k}{n}(\xi / \abs{\xi}) \fknpm{k}{n}(\abs{\xi}) ,
		\end{equation}
		then we have
		\begin{equation}
			\norm{\DiracS f}_{L^2(\R^{d+1}, \C^N)}^2 = 2 \pi \sum_{k=0}^{\infty} \sum_{n=1}^{d_k} \int_{0}^{\infty} \lambda_k (r) ( \abs{\fknp{k}{n}(r)}^2 + \abs{\fknm{k}{n}(r)}^2 )  \, dr ,
		\end{equation}
		where $\lambda_k$ is that given by \eqref{eq:lambda_k} associated with $(w, \psi, \phi_m)$.
	\end{quotelemma}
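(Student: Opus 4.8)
The plan is to diagonalise the matrix propagator $e^{-itA_\xi}$, which splits $\DiracS f$ into a positive- and a negative-energy piece, then to observe that these two pieces are orthogonal in $L^2(\R^{d+1},\C^N)$ because their partial Fourier transforms in the time variable are supported on essentially disjoint half-lines, and finally to recognise each piece as a componentwise copy of the scalar Schr\"odinger-type propagator of \eqref{eq:smoothing Schrodinger} with $\phi = \phi_m$, for which the corresponding norm identity is the pre-supremum form of Theorem \ref{thm:Schrodinger}. To start, note that $A_\xi^2 = (\abs\xi^2 + m^2)\matrixI_N = \phi_m(\abs\xi)^2\matrixI_N$, so the Hermitian matrix $A_\xi$ has spectrum $\{\pm\phi_m(\abs\xi)\}$ and
\[
\Pi_\pm(\xi) \coloneqq \frac12\mleft(\matrixI_N \pm \frac{A_\xi}{\phi_m(\abs\xi)}\mright)
\]
are the orthogonal projections onto the $\pm\phi_m(\abs\xi)$-eigenspaces; by definition $\fpm(\xi) = \Pi_\pm(\xi) f(\xi)$. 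Hence $e^{-itA_\xi} f(\xi) = e^{-it\phi_m(\abs\xi)} f_+(\xi) + e^{it\phi_m(\abs\xi)} f_-(\xi)$, so that $\DiracS f = u_+ + u_-$ with
\[
u_\pm(x,t) \coloneqq w(\abs x)^{1/2}\int_{\R^d} e^{ix\cdot\xi}\psi(\abs\xi)\, e^{\mp it\phi_m(\abs\xi)} f_\pm(\xi)\,d\xi .
\]
Since each $\Pi_\pm(\xi)$ is a contraction on $\C^N$, we have $\Norm[L^2]{f_\pm} \le \Norm[L^2]{f}$, so the radial--angular decomposition $f_\pm(\xi) = \abs\xi^{-(d-1)/2}\sum_{k,n}\Pkn{k}{n}(\xi/\abs\xi)\fknpm{k}{n}(\abs\xi)$ supplied by Theorem \ref{thm:spherical harmonics decomposition} is legitimate.

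Next I would prove that $u_+$ and $u_-$ are orthogonal in $L^2(\R^{d+1},\C^N)$. Passing to polar coordinates $\xi = r\omega$ and taking the Fourier transform in $t$, one finds that for fixed $x$ the partial Fourier transform of $u_+(x,\cdot)$ at frequency $\tau$ equals $\tfrac{2\pi}{\phi_m'(\rho)}\,w(\abs x)^{1/2}\psi(\rho)\rho^{d-1}\int_{\S^{d-1}} e^{i\rho x\cdot\omega} f_+(\rho\omega)\,d\sigma(\omega)$ with $\rho = \phi_m^{-1}(-\tau)$ when $\tau < -m$, and vanishes when $\tau > -m$; here we use that $\phi_m$ is strictly increasing on $(0,\infty)$ with range $(m,\infty)$. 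Symmetrically, the partial Fourier transform of $u_-(x,\cdot)$ is supported in $\{\tau \ge m\}$. Thus the supports of the two are disjoint up to a Lebesgue-null set (they can meet only at $\tau = 0$, and only when $m = 0$), so by Plancherel in $t$ applied for each fixed $x$ we obtain $\Innerproduct{u_+}{u_-}_{L^2(\R^{d+1},\C^N)} = 0$, whence
\[
\Norm[L^2(\R^{d+1},\C^N)]{\DiracS f}^2 = \Norm[L^2(\R^{d+1},\C^N)]{u_+}^2 + \Norm[L^2(\R^{d+1},\C^N)]{u_-}^2 .
\]

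It then remains to identify each summand. Each $u_\pm$ has the same form as $\DiracS$ except that the scalar phase $e^{\mp it\phi_m(\abs\xi)}$ replaces $e^{-itA_\xi}$; that is, $u_\pm$ is the Schr\"odinger-type propagator of \eqref{eq:smoothing Schrodinger} with $\phi = \phi_m$, applied componentwise to $f_\pm$ (for $u_-$ one first replaces $t$ by $-t$, which preserves the $L^2(\R^{d+1})$ norm). For this operator one has
\[
\Norm[L^2(\R^{d+1},\C^N)]{u_\pm}^2 = 2\pi \sum_{k=0}^{\infty}\sum_{n=1}^{d_k}\int_0^\infty \lambda_k(r)\,\abs{\fknpm{k}{n}(r)}^2\,dr ,
\]
which is precisely the computation carried out in the proof of Theorem \ref{thm:Schrodinger} in \cite{BSS2015}, before the supremum over $k$ and $r$ is taken. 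It can also be obtained directly: after the change of variables $\tau = \phi_m(r)$ above and Plancherel in $t$, one inserts the harmonic decomposition of $f_\pm$, uses $\widehat{w(\abs{\variabledot})}(\rho(\omega-\omega')) = F_w(\rho^2(1-\omega\cdot\omega'))$, and then applies the Funk--Hecke theorem (Theorem \ref{thm:Funk-Hecke}) together with the orthonormality of $\{\Pkn{k}{n}\}$ to collapse the resulting double sum over the spherical-harmonic indices to its diagonal, the surviving radial weight being $\lambda_k(\rho)$ by \eqref{eq:lambda_k}. Summing the two identities over $\pm$ yields the lemma.

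The main obstacle is making the time-variable manipulations rigorous: $e^{\mp it\phi_m(\abs\xi)}$ is only a tempered distribution in $t$, and for $m > 0$ the substitution $\tau = \phi_m(r)$ degenerates at $r = 0$, where $\phi_m'(0) = 0$ so that the factor $1/\phi_m'$ occurring in $\lambda_k$ is singular. This is handled in the customary way: one first proves the identity for $f$ in a dense subspace for which every $\fknpm{k}{n}$ is smooth and supported in a compact subset of $(0,\infty)$ --- so that all the integrals converge absolutely and the uses of Fubini, Plancherel and the change of variables are justified --- and then passes to general $f \in L^2(\R^d,\C^N)$ by density, both sides of the identity being monotone limits of sums of non-negative quantities.
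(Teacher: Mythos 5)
Your proof is correct and follows essentially the same route as the reference the paper cites for this lemma: split $f$ into the $\pm$ energy pieces via the spectral projections $\Pi_\pm(\xi) = \tfrac12(\matrixI_N \pm A_\xi/\phi_m(\abs\xi))$ of $A_\xi$, exploit the disjoint-frequency-support orthogonality of $u_\pm$ coming from Plancherel in $t$, and reduce each piece to the scalar Funk--Hecke computation of \cite{BSS2015}. Framing this as orthogonality of $u_\pm$ plus two applications of the pre-supremum form of Theorem~\ref{thm:Schrodinger} is a clean repackaging of the same direct calculation, and your final remark about first working on a dense class of nice radial profiles correctly handles the distributional and change-of-variables subtleties.
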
	
	We omit the proof of Lemma \ref{lem:Sf norm decomposition Dirac}. See \cite[Proof of Theorem 2.1]{Iko2022}.
	
	\section{In the case \texorpdfstring{$d=2$}{d=2}} \label{section:proof for 2D}
	In this section, we use the usual Pauli matrices for $\{ \alpha_j \}_{j=1}^{3}$:
	\begin{equation}
		\alpha_1 = \sigma_1 =
		\begin{pmatrix}
			0 & 1 \\
			1 & 0
		\end{pmatrix}, \quad
		\alpha_2 = \sigma_2 =
		\begin{pmatrix}
			0 & -i \\
			i & 0
		\end{pmatrix}, \quad
		\beta = \sigma_3 =
		\begin{pmatrix}
			1 & 0 \\
			0 & -1
		\end{pmatrix}.
	\end{equation}
	At first we prove the following lemma:	
	\begin{lemma} \label{lem:lemma 2D}
		Let
		\begin{equation}
			\matrixEk{k}(\theta)
			\coloneqq 
			\frac{1}{\sqrt{2\pi}} \begin{pmatrix}
				e^{i k \theta} & 0 \\ 
				0 & e^{i (k+1) \theta}
			\end{pmatrix} .
		\end{equation}
		Then the following hold:
		\begin{eqenumerate}
			\eqitem \label{item:spherical harmonics decomposition 2D}
			For any $f \in L^2(\R^2, \C^2)$, there uniquely exists $\{ \fk{k} \} \subset L^2(\R_{>0}, \C^2)$ satisfying
			\begin{gather}
				f(\xi) = r^{-1/2} \sum_{k=-\infty}^{\infty} \matrixEk{k}(\theta) \fk{k}(r) , \quad \xi = (r \cos \theta, r \sin \theta) , 
				\label{eq:spherical harmonics decomposition 2D} \tag{\ref{item:spherical harmonics decomposition 2D}.i} \\
				\norm{f}_{L^2(\R^2, \C^2)}^2 = \sum_{k=-\infty}^{\infty} \norm{\fk{k}}_{L^2(\positiveR, \C^2)}^2 . 
				\label{eq:spherical harmonics decomposition norm 2D} \tag{\ref{item:spherical harmonics decomposition 2D}.ii}
			\end{gather}
			Conversely, for any $\{ \fk{k} \} \subset L^2(\R_{>0}, \C^2)$ satisfying
			\begin{equation}
				\sum_{k=-\infty}^{\infty} \norm{\fk{k}}_{L^2(\positiveR, \C^2)}^2 < \infty ,
			\end{equation}
			the function $f$ given by \eqref{eq:spherical harmonics decomposition 2D} is in $L^2(\R^2, \C^2)$ and \eqref{eq:spherical harmonics decomposition norm 2D} holds.
			\eqitem \label{item:Sf norm decomposition Dirac 2D}
			For simplicity, we define $\lambda_k(r)$ for $k \leq -1$ by $\lambda_k(r) \coloneqq \lambda_{\abs{k}}(r)$. 
			Let $f \in L^2(\R^2, \C^2)$ and decompose $\fpm$ as
			\begin{equation}
				\fpm(\xi) =  r^{-1/2} \sum_{k=-\infty}^{\infty} \matrixEk{k}(\theta) \fkpm{k}(r) .
			\end{equation}
			Then we have
			\begin{align}
				&\quad \norm{\DiracS f}_{L^2(\R^{3}, \C^2)}^2 \\
				&= 2 \pi \sum_{k = -\infty}^{\infty} \int_{0}^{\infty} 
				\mleft(
				\Innerproduct{%
					\Lambdak{k}(r)
					\fkp{k}(r)%
				}%
				{\fkp{k}(r)} 
				+ \Innerproduct{%
					\Lambdak{k}(r)
					\fkm{k}(r)%
				}%
				{\fkm{k}(r)} 
				\mright) \, dr ,
			\end{align}
			where
			\begin{equation}
				\Lambdak{k}(r) \coloneqq \begin{pmatrix}
					\lambda_k(r) & 0 \\
					0 & \lambda_{k+1}(r)
				\end{pmatrix} .
			\end{equation}
			\eqitem \label{item:identity 2D} 
			We have
			\begin{equation}
				(\sigma_1 \cos{\theta} + \sigma_2 \sin{\theta}) \matrixEk{k}(\theta)
				= 
				\matrixEk{k}(\theta) \sigma_1 .
			\end{equation}
		\end{eqenumerate}
	\end{lemma}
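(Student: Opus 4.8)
The three items of Lemma \ref{lem:lemma 2D} are of decreasing difficulty, so I would dispatch (iii) first, use it to prove (ii), and treat (i) as a mostly standard repackaging of the $d=2$ spherical harmonics decomposition.

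For item (iii) the plan is a direct computation. Writing $\sigma_1 \cos\theta + \sigma_2 \sin\theta$ out as the matrix $\begin{pmatrix} 0 & e^{-i\theta} \\ e^{i\theta} & 0 \end{pmatrix}$ and $\matrixEk{k}(\theta)$ as $\tfrac{1}{\sqrt{2\pi}}\operatorname{diag}(e^{ik\theta}, e^{i(k+1)\theta})$, one multiplies the two $2\times 2$ matrices on each side: the left-hand product has off-diagonal entries $\tfrac{1}{\sqrt{2\pi}} e^{-i\theta} e^{i(k+1)\theta} = \tfrac{1}{\sqrt{2\pi}} e^{ik\theta}$ (top right) and $\tfrac{1}{\sqrt{2\pi}} e^{i\theta} e^{ik\theta} = \tfrac{1}{\sqrt{2\pi}} e^{i(k+1)\theta}$ (bottom left), which is exactly $\matrixEk{k}(\theta)\sigma_1$. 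This is purely a one-line verification and carries no obstacle.

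For item (i), I would reduce to the scalar decomposition Theorem \ref{thm:spherical harmonics decomposition} applied componentwise. In $d = 2$ the spaces $\HP_k(\R^2)$ are one-dimensional for $k \geq 1$ and two-dimensional for $k = 0$ over $\R$, but over $\C$ an orthonormal basis of $L^2(\S^1)$ is simply $\{(2\pi)^{-1/2} e^{ik\theta}\}_{k \in \Z}$; these are the restrictions of harmonic polynomials (resp.\ their complex conjugates for $k < 0$), so the scalar $L^2(\R^2)$ theory yields, for each component $f^{(1)}, f^{(2)}$ of $f$, a unique expansion $f^{(j)}(\xi) = r^{-1/2}\sum_{k} (2\pi)^{-1/2} e^{ik\theta} g_k^{(j)}(r)$ with $\sum_k \|g_k^{(j)}\|_{L^2}^2 = \|f^{(j)}\|^2$. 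Now the only point specific to this lemma is the index shift in the second component built into $\matrixEk{k}$: setting $\fk{k}(r) \coloneqq (g_k^{(1)}(r), g_{k+1}^{(2)}(r))^\top$ exactly matches the definition of $\matrixEk{k}(\theta)$, and summing the Parseval identities over the two components gives \eqref{eq:spherical harmonics decomposition norm 2D}, since reindexing $k \mapsto k-1$ in the second sum is harmless. Uniqueness and the converse follow the same way from the scalar statement.

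For item (ii), the strategy is to feed the decomposition of (i) into Lemma \ref{lem:Sf norm decomposition Dirac}. That lemma is stated for an arbitrary orthonormal basis $\{\Pkn{k}{n}\}$ of each $\HP_k$, expressed in the scalar form \eqref{eq:spherical harmonics decomposition}; so I first rewrite the matrix-valued expansion $\fpm(\xi) = r^{-1/2}\sum_k \matrixEk{k}(\theta)\fkpm{k}(r)$ in that scalar form, reading off that the coefficient of $(2\pi)^{-1/2}e^{ik\theta}$ in the first ($\C^2$-)component is the first entry of $\fkpm{k}(r)$ and the coefficient of $(2\pi)^{-1/2}e^{ik\theta}$ in the second component is the second entry of $\fkpm{k-1}(r)$. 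Applying Lemma \ref{lem:Sf norm decomposition Dirac} (with $N = 2$, $\phi = \phi_m$) then gives $\norm{\DiracS f}^2 = 2\pi \sum_k \int_0^\infty \lambda_k(r)\big(|(\fkp{k})_1|^2 + |(\fkm{k})_1|^2 + |(\fkp{k-1})_2|^2 + |(\fkm{k-1})_2|^2\big)\,dr$; reindexing the terms involving the second component by $k \mapsto k+1$ collects, for each fixed $k$, the weight $\lambda_k$ on the first entry and $\lambda_{k+1}$ on the second entry, which is precisely $\langle \Lambdak{k}(r)\fkp{k}, \fkp{k}\rangle + \langle \Lambdak{k}(r)\fkm{k}, \fkm{k}\rangle$. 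The bookkeeping here — keeping the $\pm$ pieces and the two vector components straight through one index shift, and making sure the convention $\lambda_{-k} = \lambda_k$ makes every term well-defined — is the only place that needs care; there is no analytic difficulty, since Lemma \ref{lem:Sf norm decomposition Dirac} already encapsulates the Funk--Hecke computation.

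The main obstacle, such as it is, is notational rather than mathematical: correctly matching the index-shifted matrix basis $\matrixEk{k}$ against the un-shifted scalar basis used in Lemma \ref{lem:Sf norm decomposition Dirac} and in Theorem \ref{thm:spherical harmonics decomposition}, so that the reindexing in the norm identity lands the pair $(\lambda_k, \lambda_{k+1})$ on the diagonal of $\Lambdak{k}$. Once item (iii) is in hand, it is exactly the algebraic fact that makes this basis the natural one — it will be the engine of the $d=2$ proof of Theorem \ref{thm:2D Dirac intro} in the rest of the section — but for the lemma itself it is just the short matrix identity above.
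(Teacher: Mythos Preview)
Your proposal is correct and follows the same approach as the paper: item (iii) by direct matrix multiplication, and items (i) and (ii) as immediate consequences of Theorem~\ref{thm:spherical harmonics decomposition} and Lemma~\ref{lem:Sf norm decomposition Dirac} respectively, with the index shift in the second component handled exactly as you describe. The paper's own proof is in fact terser than yours---it simply declares (i) and (ii) ``immediate'' from those two results---so your unpacking of the reindexing is a faithful elaboration; note only that your opening line about using (iii) to prove (ii) is a slip, since (as you correctly observe at the end) (iii) is not needed for (ii) but only later for Theorem~\ref{thm:2D Dirac}.
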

	\begin{proof}[Proof of Lemma \ref{lem:lemma 2D}]
		\eqref{item:spherical harmonics decomposition 2D} and \eqref{item:Sf norm decomposition Dirac 2D} are immediate from Theorem \ref{thm:spherical harmonics decomposition} and Lemma \ref{lem:Sf norm decomposition Dirac}, respectively. 
		\eqref{item:identity 2D} is also quite easy:
		\begin{align}
			&(\sigma_1 \cos{\theta} + \sigma_2 \sin{\theta}) \matrixEk{k}(\theta) \\
			&= \frac{1}{\sqrt{2\pi}}
			\mleft( \cos{\theta}
			\begin{pmatrix}
				0 & 1 \\
				1 & 0
			\end{pmatrix}
			+ 
			\sin{\theta} 
			\begin{pmatrix}
				0 & -i \\
				i & 0
			\end{pmatrix} 
			\mright)
			\begin{pmatrix}
				e^{i k \theta} & 0 \\ 
				0 & e^{i (k+1) \theta}
			\end{pmatrix}
			\\
			&= \frac{1}{\sqrt{2\pi}}
			\begin{pmatrix}
				0 & e^{- i \theta} \\
				e^{i\theta} & 0
			\end{pmatrix} 
			\begin{pmatrix}
				e^{i k \theta} & 0 \\ 
				0 & e^{i (k+1) \theta}
			\end{pmatrix}
			\\
			&= \frac{1}{\sqrt{2\pi}}
			\begin{pmatrix}
				0 & e^{i k \theta} \\ 
				e^{i (k+1) \theta} & 0
			\end{pmatrix} \\
			&= 
			\matrixEk{k}(\theta) \sigma_1 . \qedhere
		\end{align} 
	\end{proof}
	Using Lemma \ref{lem:lemma 2D}, we prove the following result:
	\begin{theorem}[{\cite[Theorem 2.2]{Iko2022}}] \label{thm:2D Dirac}
		We have
		\begin{equation}
			\norm{\DiracS}_{L^2(\R^2, \C^2) \to L^2(\R^3, \C^2)}^2 = 2 \pi \Diraclambdasup ,
		\end{equation}
		where
		\begin{gather}
			\Diraclambdasup
			\coloneqq \sup_{k \in \Z} \sup_{r > 0} \Diraclambdak{k}(r), \\
			\Diraclambdak{k}(r) \coloneqq 
			\frac{1}{2} ( \lambda_k(r) + \lambda_{k+1}(r) ) + \frac{m}{2\phi_{m}(r)} \abs{ \lambda_k(r) - \lambda_{k+1}(r) } .
		\end{gather}
		Regarding extremisers, let $f \in L^2(\R^2, \C^2)$ be such that
		\begin{equation}
			f(\xi) = r^{-1/2} \sum_{k=-\infty}^{\infty} \matrixEk{k}(\theta) \fk{k}(r) , \quad \xi = (r \cos \theta, r \sin \theta) .
		\end{equation}
		Then the following are equivalent:
		\begin{eqenumerate}
			\eqitem \label{item:extremiser 2D Dirac}
			The equality 
			\begin{equation}
				\norm{\DiracS f}_{L^2(\R^3, \C^2)}^2 = 2 \pi \Diraclambdasup \norm{f}_{L^2(\R^2, \C^2)}^2 
			\end{equation}
			holds.
			\eqitem \label{item:extremiser condition 2D Dirac}
			The functions $\{ \fk{k} \}_{k \in \Z}$ satisfy
			\begin{gather}
				\supp{ \fk{k} } \subset \DiracL_k \coloneqq \set{ r > 0 }{ \Diraclambdak{k}(r) = \Diraclambdasup } , \\
				\fk{k}(r)
				\in W_k(r) , \quad \text{ a.e. } r > 0 
			\end{gather}
			for each $k \in \Z$, 
			where
			\begin{align}
				&W_k(r) \\
				&= 
				\begin{dcases}
					\C^2, & m ( \lambda_k(r) - \lambda_{k+1}(r) ) = 0 , \\
					\text{the eigenspace of $m \sigma_3 + r \sigma_1$ associated with $\phi_{m}(r)$} , 
					& m ( \lambda_k(r) - \lambda_{k+1}(r) )  > 0 , \\
					\text{the eigenspace of $m \sigma_3 + r \sigma_1$ associated with $- \phi_{m}(r)$} ,
					& m ( \lambda_k(r) - \lambda_{k+1}(r) )  < 0 ,
				\end{dcases}
			\end{align}
		\end{eqenumerate}
		As a consequence, extremisers exist if and only if there exists 
		$k \in \Z$ such that $\measure{\DiracL_k} > 0$.
	\end{theorem}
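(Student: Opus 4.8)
The plan is to chain together the three parts of Lemma \ref{lem:lemma 2D} so that the operator-norm identity collapses, for each $k\in\Z$ and $r>0$, to a $2\times 2$ linear-algebra problem. Part \eqref{item:Sf norm decomposition Dirac 2D} already writes $\norm{\DiracS f}_{L^2(\R^3,\C^2)}^2$ as $2\pi\sum_{k\in\Z}\int_0^\infty(\Innerproduct{\Lambdak{k}(r)\fkp{k}(r)}{\fkp{k}(r)}+\Innerproduct{\Lambdak{k}(r)\fkm{k}(r)}{\fkm{k}(r)})\,dr$, so the first task is to express the coefficients $\fkpm{k}$ of $\fpm$ in terms of the coefficients $\fk{k}$ of $f$ itself. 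With $\alpha_1=\sigma_1$, $\alpha_2=\sigma_2$, $\beta=\sigma_3$ and $\xi=(r\cos\theta,r\sin\theta)$, the definition of $\fpm$ reads $\fpm(\xi)=\frac{1}{2}(\matrixI\pm\phi_m(r)^{-1}(m\sigma_3+r(\sigma_1\cos\theta+\sigma_2\sin\theta)))f(\xi)$. I would substitute the expansion \eqref{eq:spherical harmonics decomposition 2D}, commute the diagonal matrix $\sigma_3$ past the diagonal matrix $\matrixEk{k}(\theta)$, and apply the intertwining identity \eqref{item:identity 2D} to obtain $\fpm(\xi)=r^{-1/2}\sum_k\matrixEk{k}(\theta)P_\pm(r)\fk{k}(r)$, where $P_\pm(r):=\frac{1}{2}(\matrixI\pm J(r))$ with $J(r):=\phi_m(r)^{-1}(m\sigma_3+r\sigma_1)$. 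Since $J(r)$ is Hermitian and $J(r)^2=\matrixI$ (because $\phi_m(r)^2=r^2+m^2$), the $P_\pm(r)$ are the orthogonal projections onto the $\pm\phi_m(r)$-eigenspaces of $m\sigma_3+r\sigma_1$, and the uniqueness in \eqref{item:spherical harmonics decomposition 2D} gives $\fkpm{k}(r)=P_\pm(r)\fk{k}(r)$.

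The core step is the pointwise evaluation. Writing $\Lambdak{k}(r)=\frac{1}{2}(\lambda_k(r)+\lambda_{k+1}(r))\matrixI+\frac{1}{2}(\lambda_k(r)-\lambda_{k+1}(r))\sigma_3$, using $P_+(r)+P_-(r)=\matrixI$, and establishing the Pauli-algebra identity
\[
P_+(r)\sigma_3 P_+(r)+P_-(r)\sigma_3 P_-(r)=\frac{m}{\phi_m(r)}J(r),
\]
which reduces to the computation $J(r)\sigma_3 J(r)=\phi_m(r)^{-2}((m^2-r^2)\sigma_3+2mr\sigma_1)$ together with $\phi_m(r)^2=m^2+r^2$, a short calculation yields
\[
\Innerproduct{\Lambdak{k}(r)\fkp{k}(r)}{\fkp{k}(r)}+\Innerproduct{\Lambdak{k}(r)\fkm{k}(r)}{\fkm{k}(r)}=\frac{\lambda_k(r)+\lambda_{k+1}(r)}{2}\abs{\fk{k}(r)}^2+\frac{m(\lambda_k(r)-\lambda_{k+1}(r))}{2\phi_m(r)}\Innerproduct{J(r)\fk{k}(r)}{\fk{k}(r)}.
\]
Since $\abs{\Innerproduct{J(r)v}{v}}\le\abs{v}^2$ for all $v\in\C^2$, the right-hand side is $\le\Diraclambdak{k}(r)\abs{\fk{k}(r)}^2$, with equality precisely when $m(\lambda_k(r)-\lambda_{k+1}(r))=0$ or $\fk{k}(r)$ lies in the $\operatorname{sgn}(\lambda_k(r)-\lambda_{k+1}(r))$-eigenspace of $J(r)$ — that is, precisely when $\fk{k}(r)\in W_k(r)$. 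Summing over $k$, integrating, and invoking \eqref{item:spherical harmonics decomposition 2D} then gives $\norm{\DiracS f}_{L^2(\R^3,\C^2)}^2\le 2\pi\sum_{k\in\Z}\int_0^\infty\Diraclambdak{k}(r)\abs{\fk{k}(r)}^2\,dr\le 2\pi\Diraclambdasup\norm{f}_{L^2(\R^2,\C^2)}^2$, so $\norm{\DiracS}^2\le 2\pi\Diraclambdasup$.

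To finish I would read off the equality cases: \eqref{item:extremiser 2D Dirac} holds iff, for every $k$, the pointwise bound is saturated for a.e.\ $r$ (so $\fk{k}(r)\in W_k(r)$ a.e.) and $\int_0^\infty\Diraclambdak{k}(r)\abs{\fk{k}(r)}^2\,dr=\Diraclambdasup\norm{\fk{k}}_{L^2(\positiveR,\C^2)}^2$; since $\Diraclambdak{k}\le\Diraclambdasup$ everywhere, the latter is equivalent to $\supp\fk{k}\subset\DiracL_k$ up to null sets, which is exactly \eqref{item:extremiser condition 2D Dirac}. For the reverse inequality $\norm{\DiracS}^2\ge 2\pi\Diraclambdasup$, I would pick $k$ and $r_0$ with $\Diraclambdak{k}(r_0)$ within $\varepsilon$ of $\Diraclambdasup$, use continuity of $r\mapsto\lambda_k(r)$ to keep $\Diraclambdak{k}$ within $\varepsilon$ of $\Diraclambdasup$ on a short interval, and take $\fk{k}$ supported on that interval with values in $W_k(r)$ and all other coefficients zero, so that $\norm{\DiracS f}^2\ge 2\pi(\Diraclambdasup-\varepsilon)\norm{f}^2$; letting $\varepsilon\to0$ completes the norm identity, and the same construction shows an extremiser exists whenever some $\DiracL_k$ has positive Lebesgue measure, the converse being the equivalence just obtained.

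The bulk of this is routine Pauli-matrix bookkeeping, and the one pleasant surprise is that the cross-terms collapse — via the identity for $P_+\sigma_3 P_++P_-\sigma_3 P_-$ — exactly into the weight $\Diraclambdak{k}(r)$. The step I expect to require the most care is the lower bound together with the existence claim: one must produce, on a set of positive measure, a measurable selection $r\mapsto\fk{k}(r)\in W_k(r)$ that genuinely belongs to $L^2(\positiveR,\C^2)$ — done by normalizing a continuously varying eigenvector of $m\sigma_3+r\sigma_1$ — and one must invoke continuity of $r\mapsto\lambda_k(r)$ to pass from the supremum defining $\Diraclambdasup$ to an honest near-maximiser.
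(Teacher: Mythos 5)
Your proposal is correct and follows essentially the same route as the paper: both expand $f$ in the basis $\matrixEk{k}(\theta)$, use the intertwining identity \eqref{item:identity 2D} to compute $\fkpm{k}(r)$ as $\frac12(\matrixI\pm\phi^{-1}(m\sigma_3+r\sigma_1))\fk{k}(r)$, reduce the norm to $\int\innerproduct{\DiracLambdak{k}(r)\fk{k}(r)}{\fk{k}(r)}\,dr$, and read off the top eigenvalue $\Diraclambdak{k}(r)$ of the $2\times2$ matrix with eigenspace $W_k(r)$, then handle the sharpness and extremiser characterization by exactly the same near-maximiser construction. The only cosmetic difference is that you organize the algebra via the projections $P_\pm$ and the identity $P_+\sigma_3P_++P_-\sigma_3P_-=\tfrac{m}{\phi}J$, whereas the paper conjugates $\Lambdak{k}$ by $\matrixI\pm J$ directly; both yield the identical matrix $\DiracLambdak{k}(r)=\tfrac12(\lambda_k+\lambda_{k+1})\matrixI+\tfrac{m}{2\phi^2}(\lambda_k-\lambda_{k+1})(m\sigma_3+r\sigma_1)$.
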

	\begin{proof}[Proof of Theorem \ref{thm:2D Dirac}]
		Let $f \in L^2(\R^2, \C^2)$ be such that
		\begin{gather}
			f(\xi) = r^{-1/2} \sum_{k=-\infty}^{\infty} \matrixEk{k}(\theta) \fk{k}(r) , \quad \xi = (r \cos \theta, r \sin \theta) , \\
			\norm{f}_{L^2(\R^2, \C^2)}^2 = \sum_{k=-\infty}^{\infty}  \norm{\fk{k}}_{L^2(\positiveR, \C^2)}^2 .
		\end{gather}
		At first we need to compute $\fpm$.
		By \eqref{item:identity 2D} and $\sigma_3 \matrixEk{k} = \matrixEk{k} \sigma_3$, we have
		\begin{align}
			\fpm(\xi)
			&= 
			\frac{1}{2 \abs{\xi}^{1/2}} \mleft( f(\xi) \pm \frac{1}{\phi_{m}(\abs{\xi})} \bigg( m \sigma_3 f(\xi) + \sum_{j=1}^{2} \sigma_j \xi_j f(\xi) \bigg) \mright) \\
			&= 
			\frac{1}{2r^{1/2}} \sum_{k=-\infty}^{\infty}\mleft( \matrixEk{k}(\theta) \pm \frac{1}{\phi_{m}(r)} ( m\sigma_3 \matrixEk{k}(\theta) + r \matrixEk{k}(\theta) \sigma_1 ) \mright) \fk{k}(r) \\
			&= 
			\frac{1}{2r^{1/2}} \sum_{k=-\infty}^{\infty}  \matrixEk{k}(\theta) \mleft( \matrixI \pm \frac{1}{\phi_{m}(r)} ( m\sigma_3 + r \sigma_1 ) \mright) \fk{k}(r) .
		\end{align}
		Therefore, \eqref{item:Sf norm decomposition Dirac 2D} implies
		\begin{equation}
			\norm{\DiracS f}_{L^2(\R^{3}, \C^2)}^2 \\ 
			= 2 \pi \sum_{k=-\infty}^{\infty} \int_{0}^{\infty} 
			\innerproduct{
				\DiracLambdak{k}(r) \fk{k}(r)
			}
			{
				\fk{k}(r)
			}  \, dr ,
		\end{equation}
		where
		\begin{align}
			2 \DiracLambdak{k}(r)&\coloneqq \frac{1}{2} \mleft( \matrixI + \frac{1}{\phi_{m}} ( m\sigma_3 + r \sigma_1 ) \mright) \Lambdak{k} \mleft( \matrixI + \frac{1}{\phi_{m}} ( m\sigma_3 + r \sigma_1 ) \mright) \\
			&\quad+ \frac{1}{2} \mleft( \matrixI - \frac{1}{\phi_{m}} ( m\sigma_3 + r \sigma_1 ) \mright) \Lambdak{k} \mleft( \matrixI - \frac{1}{\phi_{m}} ( m\sigma_3 + r \sigma_1 ) \mright)\\
			&=  \Lambdak{k} + \frac{1}{\phi_{m}^2} ( m\sigma_3 + r \sigma_1 ) \Lambdak{k} ( m\sigma_3 + r \sigma_1 ) \\
			&=  \Lambdak{k}  + \frac{1}{\phi_{m}^2}
			(m^2 \sigma_3 \Lambdak{k} \sigma_3 + mr ( \sigma_3 \Lambdak{k} \sigma_1 + \sigma_1 \Lambdak{k} \sigma_3 ) + r^2 \sigma_1 \Lambdak{k} \sigma_1  ) \\
			&=  \Lambdak{k}  + \frac{1}{\phi_{m}^2}
			(m^2 \Lambdak{k} + mr (\lambda_k - \lambda_{k+1}) \sigma_1 + (\phi_{m}^2 - m^2) \sigma_1 \Lambdak{k} \sigma_1  ) \\ 
			&=  \Lambdak{k} + \sigma_1 \Lambdak{k} \sigma_1 + \frac{m}{\phi_{m}^2}
			(m ( \Lambdak{k} - \sigma_1 \Lambdak{k} \sigma_1 ) + r (\lambda_k - \lambda_{k+1}) \sigma_1  ) \\ 
			&= (\lambda_k(r) + \lambda_{k+1}(r)) \matrixI + \frac{m}{\phi_{m}(r)^2} (\lambda_k(r) - \lambda_{k+1}(r)) (m \sigma_3 + r \sigma_1) .
		\end{align}
		Now we need to determine the maximal eigenvalue of $\DiracLambdak{k}(r)$ and its associated eigenspace. 
		Since eigenvalues of the matrix $m \sigma_3 + r \sigma_1$ are $\pm \phi_{m}(r)$,
		we conclude that the maximal eigenvalue of $\DiracLambdak{k}(r)$ and its associated eigenspace are
		\begin{equation}
			\Diraclambdak{k}(r) 
			= \frac{1}{2} ( \lambda_k(r) + \lambda_{k+1}(r) ) + \frac{m}{2\phi_{m}(r)} \abs{ \lambda_k(r) - \lambda_{k+1}(r) }
		\end{equation}
		and
		\begin{align}
			&W_k(r) \\
			&= 
			\begin{dcases}
				\C^2, & m ( \lambda_k(r) - \lambda_{k+1}(r) ) = 0 , \\
				\text{the eigenspace of $m \sigma_3 + r \sigma_1$ associated with $\phi_{m}(r)$} , 
				& m ( \lambda_k(r) - \lambda_{k+1}(r) )  > 0 , \\
				\text{the eigenspace of $m \sigma_3 + r \sigma_1$ associated with $- \phi_{m}(r)$} ,
				& m ( \lambda_k(r) - \lambda_{k+1}(r) )  < 0 ,
			\end{dcases}
		\end{align}
		respectively.
		Therefore, we have
		\begin{align}
			\norm{\DiracS f}_{L^2(\R^{3}, \C^2)}^2 
			&= 2 \pi \sum_{k=-\infty}^{\infty} \int_{0}^{\infty} 
			\innerproduct{
				\DiracLambdak{k}(r) \fk{k}(r)
			}
			{
				\fk{k}(r)
			}  \, dr \\
			&\leq 2 \pi \sum_{k=-\infty}^{\infty} \int_{0}^{\infty} \Diraclambdak{k}(r)
			\abs{\fk{k}(r)}^2 \, dr \\
			&\leq 2 \pi \Diraclambdasup \norm{f}_{L^2(\R^2, \C^2)}^2 
		\end{align}
		and hence
		\begin{equation}
			\norm{\DiracS}^2 \leq 2 \pi \Diraclambdasup .
		\end{equation}
		
		To see the equality $\norm{\DiracS}^2 = 2 \pi \Diraclambdasup$, 
		we will show that for any $\varepsilon > 0$, 
		there exists $f \in L^2(\R^2, \C^2) \setminus \{0\}$ 
		such that $\norm{\DiracS f}^{2}_{L^2(\R^3, \C^2)} \geq 2 \pi ( \Diraclambdasup - \varepsilon ) \norm{f}_{L^2(\R^2, \C^2)}^2$.
		Fix $\varepsilon > 0$. 
		Then, by the definition of $\Diraclambdasup$ and the continuity of $\Diraclambdak{k}$, 
		we can choose $k \in \Z$ such that the Lebesgue measure of
		\begin{equation}
			\DiracL_k(\varepsilon) \coloneqq \set{r > 0}{ \Diraclambda_k(r) \geq \Diraclambdasup - \varepsilon }	
		\end{equation}
		is nonzero (possibly infinite). 
		Now let $f \in L^2(\R^2, \C^2) \setminus \{ 0 \}$ be
		\begin{equation}
			f(\xi) = \matrixEk{k}(\theta) \fk{k}(r)
		\end{equation}
		with
		$\fk{k} \in L^2(\positiveR, \C^2)$ satisfying
		\begin{gather}
			\supp{ \fk{k} } \subset \DiracL_k (\varepsilon) , \\
			\fk{k}(r)
			\in W_k(r) , \quad \text{ a.e. } r > 0 .
		\end{gather}
		Then we have
		\begin{align}
			\norm{\DiracS f}_{L^2(\R^{3}, \C^2)}^2 
			&= 2 \pi  \int_{0}^{\infty}
			\innerproduct{
				\DiracLambdak{k}(r) \fk{k}(r)
			}
			{
				\fk{k}(r)
			}  \, dr \\
			&= 2 \pi  \int_{0}^{\infty} \Diraclambdak{k}(r)
			\abs{\fk{k}(r)}^2 \, dr \\
			&\geq 2 \pi (\Diraclambdasup - \varepsilon) \norm{f}_{L^2(\R^2, \C^2)}^2 ,
		\end{align}
		hence the equality $\norm{\DiracS}^2 = 2 \pi \Diraclambdasup$ holds.
		
		Using 
		a similar argument, we can show that $\eqref{item:extremiser condition 2D Dirac} \implies \eqref{item:extremiser 2D Dirac}$.
		Suppose that $\{ \fk{k} \}_{k \in \Z}$ satisfies
		\begin{gather}
			\supp{ \fk{k} } \subset \DiracL_k , \\
			\fk{k}(r)
			\in W_k(r) , \quad \text{ a.e. } r > 0 
		\end{gather}
		for each $k \in \Z$.
		Then we have
		\begin{align}
			\norm{\DiracS f}_{L^2(\R^{3}, \C^2)}^2 
			&= 2 \pi  \int_{0}^{\infty} \sum_{k=-\infty}^{\infty}
			\innerproduct{
				\DiracLambdak{k}(r) \fk{k}(r)
			}
			{
				\fk{k}(r)
			}  \, dr \\
			&= 2 \pi  \int_{0}^{\infty} \sum_{k=-\infty}^{\infty} \Diraclambdak{k}(r)
			\abs{\fk{k}(r)}^2 \, dr \\
			&= 2 \pi \Diraclambdasup \norm{f}_{L^2(\R^2, \C^2)}^2 .
		\end{align}
		
		Finally, we show that $\eqref{item:extremiser 2D Dirac} \implies \eqref{item:extremiser condition 2D Dirac}$.
		Suppose that the equality $\norm{\DiracS f}_{L^2(\R^3, \C^2)}^2 = 2 \pi \Diraclambdasup \norm{f}_{L^2(\R^2, \C^2)}^2$ holds. 
		Then, since
		\begin{align}
			&\quad 2\pi \sum_{k \in \Z} \int_{0}^{\infty} \mleft( \Diraclambdasup \abs{ \fk{k}(r) }^2 - \innerproduct{
				\DiracLambdak{k}(r) \fk{k}(r)
			}
			{
				\fk{k}(r)
			} \mright)  \, dr
			\\
			&= 2 \pi \Diraclambdasup \norm{f}_{L^2(\R^2, \C^2)}^2 - \norm{\DiracS f}_{L^2(\R^3, \C^2)}^2 \\
			&= 0
		\end{align}
		and 
		\begin{equation}
			\Diraclambdasup \abs{ \fk{k}(r) }^2 - \innerproduct{
				\DiracLambdak{k}(r) \fk{k}(r)
			}
			{
				\fk{k}(r)
			} \geq 0 ,
		\end{equation}
		we obtain 
		\begin{gather}
			\fk{k}(r) \in W_k(r) , \\
			\Diraclambdasup \abs{ \fk{k}(r) }^2 - \innerproduct{
				\DiracLambdak{k}(r) \fk{k}(r)
			}
			{
				\fk{k}(r)
			}
			= ( \Diraclambdasup - \Diraclambda_k(r) ) \abs{ \fk{k}(r) }^2  = 0 
		\end{gather}
		for almost every $r > 0$ and each $k \in \Z$.
		On the other hand, the definition of $\DiracL_k$ implies that 
		\begin{equation}
			\Diraclambdasup - \Diraclambda_k(r) > 0 
		\end{equation}
		for any $r \in \positiveR \setminus \DiracL_k$ and each $k \in \Z$.
		Therefore, we conclude that 
		\begin{equation}
			\fk{k}(r) = 0 
		\end{equation}
		holds for almost every $r \in \positiveR \setminus \DiracL_k$ and each $k \in \Z$.
	\end{proof}
	\section{In the case \texorpdfstring{$d=3$}{d=3}} \label{section:proof for 3D}
	In the case $d = 3$, it is known that the following functions $\{ \Ykn{k}{n} \}_{\abs{n} \leq k}$ form an orthonormal basis for $\HP_{k}(\R^3)$ for each $k \geq 0$:
	\begin{equation} \label{eq:spherical harmonics}
		\Ykn{k}{n}(\theta, \varphi) = \frac{1}{\sqrt{2\pi}} (-1)^{(n + \abs{n})/2} \Normalizekn{k}{\abs{n}} ( \sin \theta )^{\abs{n}} \GPpn{\abs{n} + 1/2}{k - \abs{n}} (\cos \theta) e^{i n \varphi}, 
	\end{equation}
	where $\GPpn{p}{n}$ denotes the Gegenbauer polynomial, which is defined by the following recurrence relation:
	\begin{equation}
		\begin{cases}
			\GPpn{p}{-1}(x) = 0 , \\
			\GPpn{p}{0}(x) = 1 , \\
			(n+1) \GPpn{p}{n+1}(x) = 2 (n + p) x \GPpn{p}{n}(x) - ( n + 2p - 1) \GPpn{p}{n-1}(x) , 
		\end{cases}
	\end{equation}
	and $\Normalizekn{k}{\abs{n}}$ is the normalizing constant given by
	\begin{equation} \label{eq:normalizing}
		\Normalizekn{k}{n} \coloneqq (2n-1)!!
		\mleft( (k + 1/2) \frac{  (k - n)! }{ (k + n)! } \mright)^{1/2} .
	\end{equation}
	At first we prove the following lemma:
	\begin{lemma} \label{lem:identity for 3D}
		Let 
		\begin{equation}
			\matrixYkn{k}{n}(\theta, \varphi) \\
			= \begin{pmatrix}
				\Ykn{k}{n}(\theta, \varphi) & 0\\
				0 & \Ykn{k}{n+1}(\theta, \varphi)
			\end{pmatrix} . 
		\end{equation}
		Then there exist matrices $\{ \Akn{k}{n} \} \subset M_{2 \times 2}(\R)$ satisfying the following properties:
		\begin{gather}
			\mleft( \sigma_1 \sin{\theta} \cos{\varphi} 
			+ 
			\sigma_2 \sin{\theta} \sin{\varphi} 
			+ \sigma_3 \cos{\theta}
			\mright)
			\matrixYkn{k}{n}(\theta, \varphi)
			= 
			\matrixYkn{k+1}{n}(\theta, \varphi) \Akn{k}{n}
			+
			\matrixYkn{k-1}{n}(\theta, \varphi) \transposeAkn{k-1}{n}, 
			\label{eq:lemma 1} \\
			\Akn{k+1}{n} \Akn{k}{n} = \matrixO , 
			\label{eq:lemma 2}\\
			\transposeAkn{k}{n} \Akn{k}{n} + \Akn{k-1}{n} \transposeAkn{k-1}{n} = \matrixI ,
			\label{eq:lemma 3}\\
			\det\Akn{k}{n} = 0.
			\label{eq:lemma 4}
		\end{gather}
		
	\end{lemma}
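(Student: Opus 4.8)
The plan is to read off the matrices $\Akn{k}{n}$ from the classical three-term recurrences satisfied by the spherical harmonics $\Ykn{k}{n}$ — this gives \eqref{eq:lemma 1} — and then to deduce the algebraic relations \eqref{eq:lemma 2}, \eqref{eq:lemma 3} and \eqref{eq:lemma 4} from the single pointwise identity
\begin{equation*}
	\mleft( \sigma_1 \sin{\theta} \cos{\varphi} + \sigma_2 \sin{\theta} \sin{\varphi} + \sigma_3 \cos{\theta} \mright)^2 = \matrixI
\end{equation*}
together with the orthogonality of spherical harmonics of distinct degrees. Throughout, write $M(\theta,\varphi)$ for the matrix on the left, so that
\begin{equation*}
	M(\theta,\varphi) = \begin{pmatrix} \cos\theta & \sin\theta\, e^{-i\varphi} \\ \sin\theta\, e^{i\varphi} & - \cos\theta \end{pmatrix} , \qquad \matrixYkn{k}{n}(\theta,\varphi) = \operatorname{diag}\mleft( \Ykn{k}{n}(\theta,\varphi) , \Ykn{k}{n+1}(\theta,\varphi) \mright) .
\end{equation*}

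First I would derive, from the definition \eqref{eq:spherical harmonics} and the recurrence and contiguous relations for the Gegenbauer polynomials (the latter used to absorb the change in the power of $\sin\theta$ when the order $n$ is raised or lowered), three-term recurrences
\begin{equation*}
	\cos\theta\, \Ykn{k}{n} = a_k^n\, \Ykn{k+1}{n} + a_{k-1}^n\, \Ykn{k-1}{n}, \qquad \sin\theta\, e^{i\varphi}\, \Ykn{k}{n} = b_k^n\, \Ykn{k+1}{n+1} + c_k^n\, \Ykn{k-1}{n+1}
\end{equation*}
(and the analogue for $\sin\theta\, e^{-i\varphi}$) with \emph{real} coefficients. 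Two features are essential: the reality of the coefficients, which is exactly what the unimodular phase and the normalizing constants $\Normalizekn{k}{n}$ in \eqref{eq:spherical harmonics} are designed to ensure; and the fact that the degree-lowering coefficients coincide with degree-raising coefficients one level down — this holds because multiplication by $\cos\theta$ is self-adjoint on $L^2(\S^2)$, and multiplication by $\sin\theta\, e^{i\varphi}$ and by $\sin\theta\, e^{-i\varphi}$ are mutually adjoint there, so the relevant expansion coefficients are real and symmetric under the appropriate index exchange. Substituting these recurrences into $M(\theta,\varphi)\matrixYkn{k}{n}$, the first row of the product involves only $\Ykn{k\pm1}{n}$ and the second only $\Ykn{k\pm1}{n+1}$; collecting the degree-$(k+1)$ terms exhibits them as $\matrixYkn{k+1}{n}$ times a real $2\times2$ matrix, which I define to be $\Akn{k}{n}$, and the adjointness just noted shows the degree-$(k-1)$ terms assemble precisely into $\matrixYkn{k-1}{n}\transposeAkn{k-1}{n}$. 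This establishes \eqref{eq:lemma 1}.

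For \eqref{eq:lemma 2} and \eqref{eq:lemma 3}, apply $M(\theta,\varphi)$ once more to \eqref{eq:lemma 1}; the left side becomes $\matrixYkn{k}{n}$ because $M(\theta,\varphi)^2 = \matrixI$, while expanding the right side again by \eqref{eq:lemma 1} gives
\begin{equation*}
	\matrixYkn{k}{n} = \matrixYkn{k+2}{n}\, \Akn{k+1}{n}\Akn{k}{n} + \matrixYkn{k}{n}\bigl( \transposeAkn{k}{n}\Akn{k}{n} + \Akn{k-1}{n}\transposeAkn{k-1}{n} \bigr) + \matrixYkn{k-2}{n}\, \transposeAkn{k-2}{n}\transposeAkn{k-1}{n} .
\end{equation*}
Since spherical harmonics of degrees $k-2$, $k$ and $k+2$ are mutually orthogonal and the diagonal entries of $\matrixYkn{j}{n}$ are nonzero on the admissible range, comparing the degree-$(k+2)$ and degree-$k$ components entrywise forces $\Akn{k+1}{n}\Akn{k}{n} = \matrixO$ and $\transposeAkn{k}{n}\Akn{k}{n} + \Akn{k-1}{n}\transposeAkn{k-1}{n} = \matrixI$. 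Finally, \eqref{eq:lemma 4} is immediate from the explicit entries found in the first step: the product of the two diagonal entries of $\Akn{k}{n}$ and the product of its two off-diagonal entries are built from the same four linear factors in $k$ and $n$ under square roots, hence coincide, so $\det\Akn{k}{n} = 0$; alternatively, $\Akn{k+1}{n}\Akn{k}{n} = \matrixO$ with both factors nonzero already forces each to have rank one.

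The main obstacle is the bookkeeping in the first step. One must handle the varying power $(\sin\theta)^{\abs{n}}$ — which changes when $n$ is raised or lowered — keep careful track of the Gegenbauer indices and the constants $\Normalizekn{k}{n}$, and pin down the signs so that all recurrence coefficients are real and the degree-lowering block of $M(\theta,\varphi)\matrixYkn{k}{n}$ is genuinely the transpose of the degree-raising block one level down; it is precisely this symmetry that makes \eqref{eq:lemma 3} come out in the clean form stated, and without it one gets a messier relation. A minor additional point: at the two extreme orders $n = -k-1$ and $n = k$ one diagonal entry of $\matrixYkn{k}{n}$ degenerates (with the convention $\Ykn{k}{m} = 0$ for $\abs m > k$), so the part of $\Akn{k}{n}$ not pinned down by \eqref{eq:lemma 1} must be chosen to fit \eqref{eq:lemma 2}–\eqref{eq:lemma 4}; a short direct check shows this is always possible. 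Once \eqref{eq:lemma 1} is established the remaining identities follow quickly from $M(\theta,\varphi)^2 = \matrixI$ and a rank count.
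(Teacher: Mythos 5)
Your proof follows the same skeleton as the paper's: derive three-term recurrences for $\Ykn{k}{n}$ from the Gegenbauer contiguous relations, read off $\Akn{k}{n}$ from the degree-raising piece, and obtain \eqref{eq:lemma 2}--\eqref{eq:lemma 4} by applying $M(\theta,\varphi)$ to \eqref{eq:lemma 1} a second time, using $M(\theta,\varphi)^2 = \matrixI$ together with linear independence of the degrees $k-2,k,k+2$, with \eqref{eq:lemma 4} following since $\Akn{k+1}{n}\Akn{k}{n} = \matrixO$ with both factors nonzero. The genuine departure is in how the transpose structure in \eqref{eq:lemma 1} is established. The paper first factors out the $\varphi$-dependence via the $d=2$ identity of Lemma \ref{lem:lemma 2D}, writes $\Akn{k}{n}$ explicitly, and then substitutes the closed form of $\Normalizekn{k}{n}$ into $\transposeAkn{k-1}{n}$ minus the degree-lowering coefficient matrix, cancelling each of the four entries by hand; it also reduces $n \le -1$ to $n \ge 0$ via $\Ykn{k}{n}(\theta,\varphi) = \Ykn{k}{-n}(-\theta,-\varphi)$. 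Your observation that the transpose relation is forced by self-adjointness of multiplication by $\cos\theta$ and mutual adjointness of multiplication by $\sin\theta\,e^{\pm i\varphi}$ on $L^2(\S^2)$ — once the recurrence coefficients are known to be real, which is precisely what the phase $(-1)^{(n+\abs{n})/2}$ and the constants $\Normalizekn{k}{n}$ are arranged to guarantee — replaces that page of cancellations with a short conceptual argument and is a real simplification.

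One point you flag but do not close, and which you should not leave as a hand-wave: at the extreme orders $n=k$ and $n=-k-1$ one diagonal entry of $\matrixYkn{k}{n}$ vanishes identically (under the convention $\Ykn{k}{m}=0$ for $\abs{m}>k$), so \eqref{eq:lemma 1} determines only one column of $\Akn{k}{n}$, and the linear-independence step you use for \eqref{eq:lemma 3} likewise pins down only part of $\transposeAkn{k}{n}\Akn{k}{n} + \Akn{k-1}{n}\transposeAkn{k-1}{n}$. A complete proof must exhibit the remaining entries at these boundary orders and verify \eqref{eq:lemma 2}--\eqref{eq:lemma 4} there; the paper avoids the issue by giving explicit closed-form matrices throughout, rather than only characterizing them via \eqref{eq:lemma 1}.
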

	\begin{proof}[Proof of Lemma \ref{lem:identity for 3D}]	
		At first we prove \eqref{eq:lemma 1} for $n \geq 0$. By \eqref{eq:spherical harmonics}, we have
		\begin{align}
			&\quad \matrixYkn{k}{n}(\theta, \varphi) \\
			&= \begin{pmatrix}
				\Ykn{k}{n}(\theta, \varphi) & 0\\
				0 & \Ykn{k}{n+1}(\theta, \varphi)
			\end{pmatrix} \\
			&= \matrixEk{n}(\varphi) \begin{pmatrix}
				(-1)^{n} \Normalizekn{k}{n} ( \sin \theta )^{n} \GPpn{n + 1/2}{k - n} (\cos \theta)  & 0\\
				0 & (-1)^{n+1} \Normalizekn{k}{n+1} ( \sin \theta )^{n+1} \GPpn{n + 3/2}{k - n - 1} (\cos \theta) 
			\end{pmatrix} \\
			&\eqqcolon \matrixEk{n}(\varphi) \matrixCkn{k}{n}(\theta) , 
		\end{align}
		hence Lemma \ref{lem:lemma 2D} implies
		\begin{align}
			& \mleft( \sigma_1 \sin{\theta} \cos{\varphi}
			+ 
			\sigma_2 \sin{\theta} \sin{\varphi}
			\mright)
			\matrixYkn{k}{n} (\theta, \varphi)
			\\
			&= \mleft( \sigma_1 \cos{\varphi} 
			+ 
			\sigma_2 \sin{\varphi} 
			\mright)
			\matrixEk{n}(\varphi) \matrixCkn{k}{n}(\theta) \sin{\theta}  \\
			&= \matrixEk{n}(\varphi) \sigma_1 \matrixCkn{k}{n}(\theta) \sin{\theta} .
		\end{align}
		Since
		\begin{align}
			\sigma_3 \matrixYkn{k}{n}(\theta, \varphi) \cos{\theta} = \matrixEk{n}(\varphi) \matrixCkn{k}{n}(\theta) \sigma_3 \cos{\theta} ,
		\end{align}
		we obtain 
		\begin{equation}
			\mleft( \sigma_1 \sin{\theta} \cos{\varphi} 
			+ 
			\sigma_2 \sin{\theta} \sin{\varphi} 
			+ \sigma_3 \cos{\theta}
			\mright)
			\matrixYkn{k}{n}(\theta, \varphi)
			= 
			\matrixEk{n}(\varphi) ( \sigma_1 \matrixCkn{k}{n}(\theta) \sin{\theta} + \matrixCkn{k}{n}(\theta) \sigma_3 \cos{\theta} ) .
		\end{equation}
		Therefore, it is enough to show that
		\begin{equation}
			\sigma_1 \matrixCkn{k}{n}(\theta) \sin{\theta} + \matrixCkn{k}{n}(\theta) \sigma_3 \cos{\theta} = 
			\matrixCkn{k+1}{n}(\theta) \Akn{k}{n}
			+
			\matrixCkn{k-1}{n}(\theta) \transposeAkn{k-1}{n} .
		\end{equation}
		
		In order to prove this, 
		we use the following identities of the Gegenbauer polynomials:
		\begin{gather}
			(n + p) \GPpn{p}{n}(x) = p ( \GPpn{p+1}{n}(x) - \GPpn{p+1}{n-2}(x) ) ,
			\label{eq:Gegenbauer identity 1} \\
			4p (n + p) (1 - x^2) \GPpn{p+1}{n-1}(x) = (n + 2p - 1)(n + 2p) \GPpn{p}{n-1}(x) - n (n + 1) \GPpn{p}{n + 1}(x) , 
			\label{eq:Gegenbauer identity 2} \\
			2 (n + p) x \GPpn{p}{n}(x) = (n+1) \GPpn{p}{n+1}(x) + ( n + 2p - 1) \GPpn{p}{n-1}(x) .
			\label{eq:Gegenbauer identity 3}
		\end{gather}
		Using \eqref{eq:Gegenbauer identity 1} and \eqref{eq:Gegenbauer identity 2}, we obtain
		\begin{align}
			&\quad (2k + 1) (-1)^{n} \Normalizekn{k}{n} ( \sin \theta )^{n+1} \GPpn{n + 1/2}{k - n} (\cos \theta) \\
			&= (-1)^{n} \Normalizekn{k}{n} ( \sin \theta )^{n+1} (2n + 1) ( \GPpn{n+3/2}{(k+1)-(n+1)}(\cos \theta) - \GPpn{n+3/2}{(k-1) - (n+1)}(\cos \theta) ) \\
			&= (-1)^{n} ( \sin \theta )^{n+1} (2n + 1) \mleft(  \Normalizekn{k}{n}  \GPpn{n+3/2}{(k+1)-(n+1)}(\cos \theta) -\Normalizekn{k}{n}  \GPpn{n+3/2}{(k-1) - (n+1)}(\cos \theta) \mright) 
		\end{align}
		and
		\begin{align}
			&\quad(2n + 1) (2k + 1) (-1)^{n+1} \Normalizekn{k}{n+1} ( \sin \theta )^{n + 2} \GPpn{n+3/2}{k - (n+1)} (\cos \theta) \\
			&= (-1)^{n+1} (2n + 1) (2k + 1) ( \sin \theta )^{n} \Normalizekn{k}{n+1} ( 1 - (\cos{\theta})^2 ) \GPpn{n+3/2}{k - (n+1)} (\cos \theta) \\
			&= (-1)^{n+1} ( \sin \theta )^{n} \Normalizekn{k}{n+1} ( (k + n )(k + n + 1) \GPpn{n + 1/2}{(k-1) - n} (\cos \theta) - (k - n)(k - n + 1) \GPpn{n+1/2}{(k+1) - n} (\cos \theta) ) \\
			&= (-1)^{n+1} ( \sin \theta )^{n} \mleft( (k + n )(k + n + 1) \Normalizekn{k}{n+1} \GPpn{n + 1/2}{(k-1) - n} (\cos \theta) - (k - n)(k - n + 1) \Normalizekn{k}{n+1} \GPpn{n+1/2}{(k+1) - n} (\cos \theta) \mright) ,
		\end{align}
		respectively. Combining these, we get 
		\begin{align} 
			& \matrixCkn{k}{n}(\theta) \sin{\theta} \\
			&= (-1)^{n} ( \sin \theta )^{n}
			\begin{pmatrix}
				- \Normalizekn{k+1}{n+1} ( \sin \theta )  \GPpn{n+3/2}{(k+1)-(n+1)}(\cos \theta)  & 0 \\
				0 &   \Normalizekn{k+1}{n}  \GPpn{n+1/2}{(k+1) - n} (\cos \theta)
			\end{pmatrix}
			\begin{pmatrix}
				- \frac{ (2n+1) \Normalizekn{k}{n} }{ (2k+1 )\Normalizekn{k+1}{n+1} } & 0 \\
				0 & \frac{ (k - n )(k - n + 1) \Normalizekn{k}{n+1} }{ (2n + 1) (2k + 1) \Normalizekn{k+1}{n} } 
			\end{pmatrix} \\
			&+ (-1)^{n} ( \sin \theta )^{n}
			\begin{pmatrix}
				- \Normalizekn{k-1}{n+1} ( \sin \theta ) \GPpn{n+3/2}{(k-1) - (n+1)}(\cos \theta)  & 0 \\
				0 &  \Normalizekn{k-1}{n} \GPpn{n + 1/2}{(k-1) - n} (\cos \theta)
			\end{pmatrix}
			\begin{pmatrix}
				\frac{ (2n+1) \Normalizekn{k}{n} }{ (2k+1 )\Normalizekn{k-1}{n+1} } & 0 \\
				0 & - \frac{ (k + n )(k + n + 1) \Normalizekn{k}{n+1} }{ (2n + 1) (2k + 1) \Normalizekn{k-1}{n} }
			\end{pmatrix} \\
			&= \sigma_1 \matrixCkn{k+1}{n}(\theta) \sigma_1
			\begin{pmatrix}
				- \frac{ (2n+1) \Normalizekn{k}{n} }{ (2k+1 )\Normalizekn{k+1}{n+1} } & 0 \\
				0 & \frac{ (k - n )(k - n + 1) \Normalizekn{k}{n+1} }{ (2n + 1) (2k + 1) \Normalizekn{k+1}{n} } 
			\end{pmatrix}
			+ \sigma_1 \matrixCkn{k-1}{n}(\theta) \sigma_1
			\begin{pmatrix}
				\frac{ (2n+1) \Normalizekn{k}{n} }{ (2k+1 )\Normalizekn{k-1}{n+1} } & 0 \\
				0 & - \frac{ (k + n )(k + n + 1) \Normalizekn{k}{n+1} }{ (2n + 1) (2k + 1) \Normalizekn{k-1}{n} }
			\end{pmatrix} \\
			&= \sigma_1 \matrixCkn{k+1}{n}(\theta) 
			\begin{pmatrix}
				0 & \frac{ (k - n )(k - n + 1) \Normalizekn{k}{n+1} }{ (2n + 1) (2k + 1) \Normalizekn{k+1}{n} }  \\
				- \frac{ (2n+1) \Normalizekn{k}{n} }{ (2k+1 )\Normalizekn{k+1}{n+1} } & 0
			\end{pmatrix}
			+ \sigma_1 \matrixCkn{k-1}{n}(\theta) 
			\begin{pmatrix}
				0 & - \frac{ (k + n )(k + n + 1) \Normalizekn{k}{n+1} }{ (2n + 1) (2k + 1) \Normalizekn{k-1}{n} } \\
				\frac{ (2n+1) \Normalizekn{k}{n} }{ (2k+1 )\Normalizekn{k-1}{n+1} } & 0
			\end{pmatrix} .
		\end{align}
		Furthermore, \eqref{eq:Gegenbauer identity 3} implies
		\begin{align}
			& (2k + 1) (-1)^{n} \Normalizekn{k}{n} ( \sin \theta )^{n} \GPpn{n + 1/2}{k - n} (\cos \theta) \cos{\theta} \\
			&\quad = (-1)^{n}  ( \sin \theta )^{n} \mleft( (k-n+1) \Normalizekn{k}{n} \GPpn{n+1/2}{(k+1)-n}(\cos \theta) +  (k+n) \Normalizekn{k}{n} \GPpn{n + 1/2}{(k-1)-n}(\theta) \mright)
		\end{align}
		and so that
		\begin{align} 
			& \matrixCkn{k}{n}(\theta) \sigma_3 \cos{\theta} \\
			&= (-1)^{n} ( \sin \theta )^{n}
			\begin{pmatrix}
				\Normalizekn{k+1}{n}  \GPpn{n+1/2}{(k+1)-n}(\cos \theta)  & 0 \\
				0 &  - \Normalizekn{k+1}{n+1} ( \sin \theta ) \GPpn{n+3/2}{(k+1)-(n+1)}(\cos \theta)
			\end{pmatrix}
			\begin{pmatrix}
				\frac{(k-n+1) \Normalizekn{k}{n}}{ (2k+1) \Normalizekn{k+1}{n} } & 0 \\
				0 & \frac{(k-n) \Normalizekn{k}{n+1}}{ (2k+1) \Normalizekn{k+1}{n+1} }
			\end{pmatrix} \sigma_3 \\
			&+ (-1)^{n} ( \sin \theta )^{n}
			\begin{pmatrix}
				\Normalizekn{k-1}{n}  \GPpn{n+1/2}{(k-1)-n}(\cos \theta)  & 0 \\
				0 & - \Normalizekn{k-1}{n+1} ( \sin \theta ) \GPpn{n+3/2}{(k-1)-(n+1)}(\cos \theta)
			\end{pmatrix}
			\begin{pmatrix}
				\frac{(k+n) \Normalizekn{k}{n}}{ (2k+1) \Normalizekn{k-1}{n} } & 0 \\
				0 & \frac{(k+n+1) \Normalizekn{k}{n+1}}{ (2k+1) \Normalizekn{k-1}{n+1} }
			\end{pmatrix} \sigma_3 \\
			&= \matrixCkn{k+1}{n}(\theta)
			\begin{pmatrix}
				\frac{(k-n+1) \Normalizekn{k}{n}}{ (2k+1) \Normalizekn{k+1}{n} } & 0 \\
				0 & - \frac{(k-n) \Normalizekn{k}{n+1}}{ (2k+1) \Normalizekn{k+1}{n+1} }
			\end{pmatrix} 
			+ \matrixCkn{k-1}{n}(\theta)
			\begin{pmatrix}
				\frac{(k+n) \Normalizekn{k}{n}}{ (2k+1) \Normalizekn{k-1}{n} } & 0 \\
				0 & - \frac{(k+n+1) \Normalizekn{k}{n+1}}{ (2k+1) \Normalizekn{k-1}{n+1} } 
			\end{pmatrix} .
		\end{align}
		Therefore, we conclude that
		\begin{align}
			\quad &\sigma_1 \matrixCkn{k}{n}(\theta) \sin{\theta} + \matrixCkn{k}{n}(\theta) \sigma_3 \cos{\theta} \\
			&= \matrixCkn{k+1}{n}(\theta) 
			\begin{pmatrix}
				\frac{(k-n+1) \Normalizekn{k}{n}}{ (2k+1) \Normalizekn{k+1}{n} } & \frac{ (k - n )(k - n + 1) \Normalizekn{k}{n+1} }{ (2n + 1) (2k + 1) \Normalizekn{k+1}{n} }  \\
				- \frac{ (2n+1) \Normalizekn{k}{n} }{ (2k+1 )\Normalizekn{k+1}{n+1} } & - \frac{(k-n) \Normalizekn{k}{n+1}}{ (2k+1) \Normalizekn{k+1}{n+1} }
			\end{pmatrix}
			+  \matrixCkn{k-1}{n}(\theta) 
			\begin{pmatrix}
				\frac{(k+n) \Normalizekn{k}{n}}{ (2k+1) \Normalizekn{k-1}{n} } & - \frac{ (k + n )(k + n + 1) \Normalizekn{k}{n+1} }{ (2n + 1) (2k + 1) \Normalizekn{k-1}{n} } \\
				\frac{ (2n+1) \Normalizekn{k}{n} }{ (2k+1 )\Normalizekn{k-1}{n+1} } & - \frac{(k+n+1) \Normalizekn{k}{n+1}}{ (2k+1) \Normalizekn{k-1}{n+1} } 
			\end{pmatrix} 
		\end{align}
		holds.
		Now let
		\begin{equation}
			\Akn{k}{n} \coloneqq \begin{pmatrix}
				\frac{(k-n+1) \Normalizekn{k}{n}}{ (2k+1) \Normalizekn{k+1}{n} } & \frac{ (k - n )(k - n + 1) \Normalizekn{k}{n+1} }{ (2n + 1) (2k + 1) \Normalizekn{k+1}{n} }  \\
				- \frac{ (2n+1) \Normalizekn{k}{n} }{ (2k+1 )\Normalizekn{k+1}{n+1} } & - \frac{(k-n) \Normalizekn{k}{n+1}}{ (2k+1) \Normalizekn{k+1}{n+1} }
			\end{pmatrix} .
		\end{equation}
		Then we have
		\begin{align}
			&\quad \transposeAkn{k-1}{n} - 	\begin{pmatrix}
				\frac{(k+n) \Normalizekn{k}{n}}{ (2k+1) \Normalizekn{k-1}{n} } & - \frac{ (k + n )(k + n + 1) \Normalizekn{k}{n+1} }{ (2n + 1) (2k + 1) \Normalizekn{k-1}{n} } \\
				\frac{ (2n+1) \Normalizekn{k}{n} }{ (2k+1 )\Normalizekn{k-1}{n+1} } & - \frac{(k+n+1) \Normalizekn{k}{n+1}}{ (2k+1) \Normalizekn{k-1}{n+1} } 
			\end{pmatrix} \\
			&=   \begin{pmatrix}
				\frac{(k-n) \Normalizekn{k-1}{n}}{ (2k-1) \Normalizekn{k}{n} } & - \frac{ (2n+1) \Normalizekn{k-1}{n} }{ (2k-1 )\Normalizekn{k}{n+1} }  \\
				\frac{ (k - n - 1)(k - n) \Normalizekn{k-1}{n+1} }{ (2n + 1) (2k - 1) \Normalizekn{k}{n} }  & - \frac{(k-n-1) \Normalizekn{k-1}{n+1}}{ (2k-1) \Normalizekn{k}{n+1} }
			\end{pmatrix} - \begin{pmatrix}
				\frac{(k+n) \Normalizekn{k}{n}}{ (2k+1) \Normalizekn{k-1}{n} } & - \frac{ (k + n )(k + n + 1) \Normalizekn{k}{n+1} }{ (2n + 1) (2k + 1) \Normalizekn{k-1}{n} } \\
				\frac{ (2n+1) \Normalizekn{k}{n} }{ (2k+1 )\Normalizekn{k-1}{n+1} } & - \frac{(k+n+1) \Normalizekn{k}{n+1}}{ (2k+1) \Normalizekn{k-1}{n+1} } 
			\end{pmatrix} \\
			&= \begin{pmatrix}
				\frac{(k-n) \Normalizekn{k-1}{n}}{ (2k-1) \Normalizekn{k}{n} } - \frac{(k+n) \Normalizekn{k}{n}}{ (2k+1) \Normalizekn{k-1}{n} } & \frac{ (k + n )(k + n + 1) \Normalizekn{k}{n+1} }{ (2n + 1) (2k + 1) \Normalizekn{k-1}{n} } - \frac{ (2n+1) \Normalizekn{k-1}{n} }{ (2k-1 )\Normalizekn{k}{n+1} }  \\
				\frac{ (k - n - 1)(k - n) \Normalizekn{k-1}{n+1} }{ (2n + 1) (2k - 1) \Normalizekn{k}{n} } - \frac{ (2n+1) \Normalizekn{k}{n} }{ (2k+1 )\Normalizekn{k-1}{n+1} }  & \frac{(k+n+1) \Normalizekn{k}{n+1}}{ (2k+1) \Normalizekn{k-1}{n+1} }  - \frac{(k-n-1) \Normalizekn{k-1}{n+1}}{ (2k-1) \Normalizekn{k}{n+1} }
			\end{pmatrix} \\
			&= \begin{pmatrix}
				\frac{(2k+1) (k-n) ( \Normalizekn{k-1}{n} )^2 - (2k-1) (k+n) ( \Normalizekn{k}{n} )^2 }{ (2k-1) (2k+1) \Normalizekn{k}{n} \Normalizekn{k-1}{n} }& \frac{ (2k-1)(k + n )(k + n + 1) ( \Normalizekn{k}{n+1} )^2 - (2n+1)^2 (2k + 1) ( \Normalizekn{k-1}{n} )^2 }{ (2n + 1) (2k + 1)  (2k-1 )\Normalizekn{k}{n+1} \Normalizekn{k-1}{n} } \\
				\frac{ (2k+1)(k - n )(k - n - 1) ( \Normalizekn{k-1}{n+1} )^2 - (2n+1)^2 (2k - 1) ( \Normalizekn{k}{n} )^2 }{ (2n + 1) (2k + 1)  (2k-1 )\Normalizekn{k}{n} \Normalizekn{k-1}{n+1} }  & \frac{(2k-1) (k+n+1) ( \Normalizekn{k}{n+1} )^2 - (2k+1) (k-n-1) ( \Normalizekn{k-1}{n+1} )^2 }{ (2k-1) (2k+1) \Normalizekn{k}{n+1} \Normalizekn{k-1}{n+1} } 
			\end{pmatrix} ,
		\end{align}
		and substituting the explicit expression for the normalizing constant \eqref{eq:normalizing} shows that
		\begin{align}
			&\quad (2k+1) (k-n) ( \Normalizekn{k-1}{n} )^2 - (2k-1) (k+n) ( \Normalizekn{k}{n} )^2 \\
			&= (2k+1) (k-n) ( (2n-1)!! )^2
			(k - 1/2) \frac{  (k - n - 1)! }{ (k + n - 1)! } - (2k-1) (k+n) ( (2n-1)!! )^2 (k + 1/2) \frac{  (k - n)! }{ (k + n)! } \\
			&= 0, 
		\end{align}
		that 
		\begin{align}
			&\quad (2k-1)(k + n )(k + n + 1) ( \Normalizekn{k}{n+1} )^2 - (2n+1)^2 (2k + 1) ( \Normalizekn{k-1}{n} )^2 \\
			&= (2k-1) (k+n) (k+n+1) ( (2n+1)!! )^2 (k + 1/2) \frac{  (k - n - 1)! }{ (k + n + 1)! } - (2n+1)^2 (2k + 1) ( (2n-1)!! )^2	(k - 1/2) \frac{  (k - n - 1)! }{ (k + n - 1)! } \\
			&= 0, 
		\end{align}
		that 
		\begin{align}
			&\quad (2k+1)(k - n )(k - n - 1) ( \Normalizekn{k-1}{n+1} )^2 - (2n+1)^2 (2k- 1) ( \Normalizekn{k}{n} )^2 \\
			&= (2k+1) (k-n) (k-n-1) ( (2n+1)!! )^2 (k - 1/2) \frac{  (k - n - 2)! }{ (k + n)! } - (2n+1)^2 (2k - 1) ( (2n-1)!! )^2 (k + 1/2) \frac{  (k - n)! }{ (k + n)! } \\
			&= 0, 
		\end{align}
		and that 
		\begin{align}
			&\quad (2k-1) (k+n+1) ( \Normalizekn{k}{n+1} )^2 - (2k+1) (k-n-1) ( \Normalizekn{k-1}{n+1} )^2 \\
			&= (2k-1) (k+n+1) ( (2n+1)!! )^2 (k + 1/2) \frac{  (k - n - 1)! }{ (k + n + 1)! } - (2k+1) (k-n-1) ( (2n+1)!! )^2 (k - 1/2) \frac{  (k - n - 2)! }{ (k + n)! } \\
			&= 0 .
		\end{align}
		Consequently, we obtain 
		\begin{equation}
			\mleft( \sigma_1 \sin{\theta} \cos{\varphi} 
			+ 
			\sigma_2 \sin{\theta} \sin{\varphi} 
			+ \sigma_3 \cos{\theta}
			\mright)
			\matrixYkn{k}{n}(\theta, \varphi)
			= 
			\matrixYkn{k+1}{n}(\theta, \varphi) \Akn{k}{n}
			+
			\matrixYkn{k-1}{n}(\theta, \varphi) \transposeAkn{k-1}{n}
		\end{equation}
		for $n \geq 0$. 
		
		Now we consider the case $n \leq -1$. 
		In this case, $\Ykn{k}{n}(\theta, \varphi) = \Ykn{k}{-n} (-\theta, -\varphi) $ implies $\matrixYkn{k}{n}(\theta, \varphi) = \sigma_1 \matrixYkn{k}{-(n+1)} (-\theta, -\varphi) \sigma_1$ and so that 
		\begin{align}
			&\quad \mleft( \sigma_1 \sin{\theta} \cos{\varphi} 
			+ 
			\sigma_2 \sin{\theta} \sin{\varphi} 
			+ \sigma_3 \cos{\theta}
			\mright)
			\matrixYkn{k}{n}(\theta, \varphi) \\
			&= \mleft( \sigma_1 \sin{\theta} \cos{\varphi} 
			+ 
			\sigma_2 \sin{\theta} \sin{\varphi} 
			+ \sigma_3 \cos{\theta}
			\mright) \sigma_1 \matrixYkn{k}{-(n+1)} (-\theta, -\varphi) \sigma_1 \\
			&= \sigma_1 \mleft( \sigma_1 \sin{\theta} \cos{\varphi} 
			-
			\sigma_2 \sin{\theta} \sin{\varphi} 
			- \sigma_3 \cos{\theta}
			\mright) \matrixYkn{k}{-(n+1)} (-\theta, -\varphi) \sigma_1 \\
			&= - \sigma_1 \mleft( \sigma_1 \sin{(-\theta)} \cos{(-\varphi)} 
			+
			\sigma_2 \sin{(-\theta)} \sin{(-\varphi)} 
			+ \sigma_3 \cos{(-\theta)}
			\mright) \matrixYkn{k}{-(n+1)} (-\theta, -\varphi) \sigma_1 \\
			&= - \sigma_1 \mleft( \matrixYkn{k+1}{-(n+1)}(-\theta, -\varphi) \Akn{k}{-(n+1)}
			+
			\matrixYkn{k-1}{-(n+1)}(-\theta, -\varphi) \transposeAkn{k-1}{-(n+1)}
			\mright)  \sigma_1 \\
			&= - \matrixYkn{k+1}{n} (\theta, \varphi) \sigma_1
			\Akn{k}{-(n+1)} \sigma_1
			- \matrixYkn{k-1}{n} (\theta, \varphi) \sigma_1
			\transposeAkn{k-1}{-(n+1)} \sigma_1 .
		\end{align}
		Thus, by letting
		\begin{equation}
			\Akn{k}{n} \coloneqq - \sigma_1 \Akn{k}{-(n+1)} \sigma_1 , 
		\end{equation}
		we have the desired result for $n \leq -1$.
		
		Finally, we prove \eqref{eq:lemma 2}, \eqref{eq:lemma 3} and \eqref{eq:lemma 4}. 
		Since
		\begin{equation}
			\mleft( \sigma_1 \sin{\theta} \cos{\varphi} 
			+ 
			\sigma_2 \sin{\theta} \sin{\varphi} 
			+ \sigma_3 \cos{\theta}
			\mright)^2 = \matrixI, 
		\end{equation}
		we have
		\begin{equation}
			\mleft( \sigma_1 \sin{\theta} \cos{\varphi} 
			+ 
			\sigma_2 \sin{\theta} \sin{\varphi} 
			+ \sigma_3 \cos{\theta}
			\mright)^2 \matrixYkn{k}{n} = \matrixYkn{k}{n} .
		\end{equation}
		On the other hand, using \eqref{eq:lemma 1} twice, we also have
		\begin{align}
			&\quad \mleft( \sigma_1 \sin{\theta} \cos{\varphi} 
			+ 
			\sigma_2 \sin{\theta} \sin{\varphi} 
			+ \sigma_3 \cos{\theta}
			\mright)^2 \matrixYkn{k}{n} \\
			&= \mleft( \sigma_1 \sin{\theta} \cos{\varphi} 
			+ 
			\sigma_2 \sin{\theta} \sin{\varphi} 
			+ \sigma_3 \cos{\theta}
			\mright) ( \matrixYkn{k+1}{n} \Akn{k}{n}
			+
			\matrixYkn{k-1}{n} \transposeAkn{k-1}{n} ) \\
			&= \matrixYkn{k+2}{n} \Akn{k+1}{n} \Akn{k}{n} 
			+ \matrixYkn{k}{n} ( \transposeAkn{k}{n} \Akn{k}{n} + \Akn{k-1}{n} \transposeAkn{k-1}{n} )
			+ \matrixYkn{k-2}{n} \transposeAkn{k-2}{n} \transposeAkn{k-1}{n},
		\end{align}
		hence \eqref{eq:lemma 2} and \eqref{eq:lemma 3} hold.
		Moreover, since $\Akn{k}{n}, \Akn{k+1}{n} \ne \matrixO$ and $\Akn{k+1}{n} \Akn{k}{n} = \matrixO$, we obtain $\det\Akn{k}{n} = \det \Akn{k+1}{n} = 0$, which shows \eqref{eq:lemma 4}.
	\end{proof}
	As a consequence of \eqref{eq:lemma 2}, \eqref{eq:lemma 3} and \eqref{eq:lemma 4}, we obtain a certain orthonormal basis of $\C^2$:
	\begin{corollary} \label{cor:orthonormal basis for C^2}
		There exist $\{ \ukn{k}{n} \}, \{ \vkn{k}{n} \} \subset \C^2$ such that
		\begin{gather}
			\abs{ \ukn{k}{n} } = \abs{ \vkn{k}{n} } = 1 , \\
			\innerproduct{\ukn{k}{n}}{\vkn{k}{n}} = 0 , \\
			\Akn{k}{n} \ukn{k}{n} = \transposeAkn{k-1}{n} \vkn{k}{n} = \zerovec , \\
			\ukn{k+1}{n} = \Akn{k}{n} \vkn{k}{n} = \Akn{k}{n} \transposeAkn{k}{n} \ukn{k+1}{n}, \quad \vkn{k}{n} = \transposeAkn{k}{n} \ukn{k+1}{n} = \transposeAkn{k}{n} \Akn{k}{n} \vkn{k}{n} .
		\end{gather}
	\end{corollary}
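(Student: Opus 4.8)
The plan is to read off the vectors $\ukn{k}{n}$ and $\vkn{k}{n}$ directly from the rank-one structure that \eqref{eq:lemma 2}--\eqref{eq:lemma 4} impose on the (real) matrices $\Akn{k}{n}$. Fix $n$ and write $A_k \coloneqq \Akn{k}{n}$; since $A_k$ is real, $\transpose{A_k}$ is its adjoint on $\C^2$. First I would record that $\operatorname{rank} A_k = 1$: by \eqref{eq:lemma 4} the rank is at most one, while if $A_k = \matrixO$ then \eqref{eq:lemma 3} would give $A_{k-1}\transpose{A_{k-1}} = \matrixI$, forcing $\det A_{k-1} \neq 0$ and contradicting \eqref{eq:lemma 4}. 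Hence $\ker A_k$ is a line, and I let $\ukn{k}{n}$ be any (real) unit vector spanning it; this immediately gives $\abs{\ukn{k}{n}} = 1$ and $\Akn{k}{n}\ukn{k}{n} = \zerovec$, and I note that the choices for different $k$ need not be coherent.

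Next I would set $\vkn{k}{n} \coloneqq \transposeAkn{k}{n}\ukn{k+1}{n}$. The single computation driving everything is the identity
\begin{equation}
\Akn{k}{n}\transposeAkn{k}{n}\ukn{k+1}{n} = \ukn{k+1}{n},
\end{equation}
obtained by applying \eqref{eq:lemma 3} with $k$ replaced by $k+1$, namely $\transposeAkn{k+1}{n}\Akn{k+1}{n} + \Akn{k}{n}\transposeAkn{k}{n} = \matrixI$, to the vector $\ukn{k+1}{n}$ and using $\Akn{k+1}{n}\ukn{k+1}{n} = \zerovec$. (In fact \eqref{eq:lemma 3} forces $\transposeAkn{k}{n}\Akn{k}{n}$ and $\Akn{k}{n}\transposeAkn{k}{n}$ to be the orthogonal projections onto $(\ker\Akn{k}{n})^{\perp}$ and onto $\operatorname{range}\Akn{k}{n}=\operatorname{span}\ukn{k+1}{n}$ respectively, since a sum of two rank-$\le 1$ positive matrices equal to $\matrixI$ must be a pair of complementary rank-one orthogonal projections; but the direct computation above is all that is needed.)

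Granting this identity, every clause of the corollary becomes a one-line check. The norm: $\abs{\vkn{k}{n}}^2 = \innerproduct{\Akn{k}{n}\transposeAkn{k}{n}\ukn{k+1}{n}}{\ukn{k+1}{n}} = \abs{\ukn{k+1}{n}}^2 = 1$. Orthogonality: $\innerproduct{\ukn{k}{n}}{\vkn{k}{n}} = \innerproduct{\Akn{k}{n}\ukn{k}{n}}{\ukn{k+1}{n}} = 0$. Annihilation: $\transposeAkn{k-1}{n}\vkn{k}{n} = \transposeAkn{k-1}{n}\transposeAkn{k}{n}\ukn{k+1}{n} = \zerovec$, since \eqref{eq:lemma 2} with $k$ replaced by $k-1$ gives $\Akn{k}{n}\Akn{k-1}{n} = \matrixO$, hence $\transposeAkn{k-1}{n}\transposeAkn{k}{n} = \matrixO$. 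Raising and lowering: $\Akn{k}{n}\vkn{k}{n} = \Akn{k}{n}\transposeAkn{k}{n}\ukn{k+1}{n} = \ukn{k+1}{n}$ (so also $\ukn{k+1}{n} = \Akn{k}{n}\transposeAkn{k}{n}\ukn{k+1}{n}$, which is the driving identity), while $\transposeAkn{k}{n}\Akn{k}{n}\vkn{k}{n} = \transposeAkn{k}{n}(\Akn{k}{n}\transposeAkn{k}{n}\ukn{k+1}{n}) = \transposeAkn{k}{n}\ukn{k+1}{n} = \vkn{k}{n}$, and $\vkn{k}{n} = \transposeAkn{k}{n}\ukn{k+1}{n}$ holds by definition.

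I do not expect any genuinely hard step here; the only place requiring care is the smallest admissible $k$ for a given $n$, where one of $\Ykn{k}{n}$, $\Ykn{k}{n+1}$ degenerates and the recurrence \eqref{eq:lemma 1}---hence the boundary instance of \eqref{eq:lemma 3} used above---must be read with the convention that out-of-range $\Akn{k}{n}$ are $\matrixO$. There one either checks that the few relations involving $\Akn{k-1}{n}$ hold trivially at that index, or restricts the range of $k$ to where \eqref{eq:lemma 2}--\eqref{eq:lemma 4} hold verbatim; either way this is routine bookkeeping and does not affect the construction.
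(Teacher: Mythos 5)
Your proof is correct and essentially identical to the paper's: you define $\ukn{k}{n}$ as a unit kernel vector of $\Akn{k}{n}$, set $\vkn{k}{n} \coloneqq \transposeAkn{k}{n}\ukn{k+1}{n}$, and verify each claimed property by the same one-line computations driven by the identity $\Akn{k}{n}\transposeAkn{k}{n}\ukn{k+1}{n} = \ukn{k+1}{n}$ from \eqref{eq:lemma 3}. Your extra observation that $\Akn{k}{n}\neq\matrixO$ (which the paper already noted while proving \eqref{eq:lemma 4}) and the boundary-index remark are sensible hygiene but do not change the argument.
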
 
	\begin{proof}[Proof of Corollary \ref{cor:orthonormal basis for C^2}]
		Since $\det \Akn{k}{n} = 0$, we can take $\ukn{k}{n} \in \C^2$ satisfying $\Akn{k}{n} \ukn{k}{n} = \zerovec$ and $\abs{ \ukn{k}{n} } = 1$ for each $k$, $n$.
		Now let $\vkn{k}{n} \coloneqq \transposeAkn{k}{n} \ukn{k+1}{n}$.
		Then we have
		\begin{gather*}
			\Akn{k}{n} \vkn{k}{n} 
			= \Akn{k}{n} \transposeAkn{k}{n} \ukn{k+1}{n} 
			= ( \transposeAkn{k+1}{n} \Akn{k+1}{n} + \Akn{k}{n} \transposeAkn{k}{n} ) \ukn{k+1}{n} 
			= \ukn{k+1}{n} , \\
			\transposeAkn{k-1}{n} \vkn{k}{n} 
			= \transposeAkn{k-1}{n} \transposeAkn{k}{n} \ukn{k+1}{n} 
			= \zerovec , \\
			\innerproduct{\ukn{k}{n}}{\vkn{k}{n}}
			= \innerproduct{\ukn{k}{n}}{\transposeAkn{k}{n} \ukn{k+1}{n} } 
			= \innerproduct{\Akn{k}{n} \ukn{k}{n}}{ \ukn{k+1}{n} } \
			= 0 , \\
			\abs{ \vkn{k}{n} }^2 
			= \innerproduct{ \vkn{k}{n} }{ \transposeAkn{k}{n} \ukn{k+1}{n} } 
			= \innerproduct{ \Akn{k}{n} \vkn{k}{n} }{ \ukn{k+1}{n} } \
			= \abs{ u_{k+1}^{n} }^2 
			= 1 . \quad \qedhere
		\end{gather*}
	\end{proof}
	Now we prove the following lemma, which is analogue to Lemma \ref{lem:lemma 2D}:
	\begin{lemma} \label{lem:identity for 3D Ekn}
		Let 
		\begin{equation}
			\matrixEkn{k}{n}(\theta, \varphi) \coloneqq \begin{pmatrix}
				\matrixYkn{k}{n}(\theta, \varphi) \vkn{k}{n} & \zerovec & \zerovec & \matrixYkn{k+1}{n}(\theta, \varphi) \ukn{k+1}{n} \\
				\zerovec & \matrixYkn{k}{n}(\theta, \varphi) \vkn{k}{n} & \matrixYkn{k+1}{n}(\theta, \varphi) \ukn{k+1}{n} & \zerovec
			\end{pmatrix} .
		\end{equation}
		Then the following hold:
		\begin{eqenumerate}
			\eqitem \label{item:spherical harmonics decomposition 3D}
			For any $f \in L^2(\R^3, \C^4)$, there uniquely exists $\{ \fkn{k}{n} \} \subset L^2(\R_{>0}, \C^4)$ satisfying
			\begin{gather}
				f(\xi) =  r^{-1} \sum_{k = 0}^{\infty} \sumnk 
				\matrixEkn{k}{n}(\theta, \varphi) 
				\fkn{k}{n}(r) ,
				\quad \xi = (r \sin \theta \cos \varphi , r \sin \theta \sin \varphi, \cos \theta) , 
				\label{eq:spherical harmonics decomposition 3D} \tag{\ref{item:spherical harmonics decomposition 3D}.i} \\
				\norm{f}_{L^2(\R^3, \C^4)}^2 = \sum_{k = 0}^{\infty} \sumnk \norm{\fkn{k}{n}}_{L^2(\positiveR, \C^4)}^2 .
				\label{eq:spherical harmonics decomposition norm 3D} \tag{\ref{item:spherical harmonics decomposition 3D}.ii} 
			\end{gather}
			Conversely, for any $\{ \fkn{k}{n} \} \subset L^2(\R_{>0}, \C^4)$ satisfying
			\begin{equation}
				\sum_{k = 0}^{\infty} \sumnk \norm{\fkn{k}{n}}_{L^2(\positiveR, \C^4)}^2 < \infty ,
			\end{equation}
			the function $f$ given by \eqref{eq:spherical harmonics decomposition 3D} is in $L^2(\R^3, \C^4)$ and \eqref{eq:spherical harmonics decomposition norm 3D} holds.
			\eqitem \label{item:Sf norm decomposition Dirac 3D}
			Let $f \in L^2(\R^3, \C^4)$ and decompose $\fpm$ as
			\begin{equation}
				\fpm(\xi) = r^{-1} \sum_{k = 0}^{\infty} \sumnk 
				\matrixEkn{k}{n}(\theta, \varphi) 
				\fknpm{k}{n}(r) .
			\end{equation}
			Then we have
			\begin{align}
				&\quad \norm{\DiracS f}_{L^2(\R^{4}, \C^4)}^2 \\
				&= 2 \pi \sum_{k = 0}^{\infty} \sumnk \int_{0}^{\infty} 
				\mleft(
				\Innerproduct{%
					(\Lambdak{k}(r) \otimes \matrixI)
					\fknp{k}{n}(r)%
				}%
				{\fknp{k}{n}(r)} 
				+ \Innerproduct{%
					(\Lambdak{k}(r) \otimes \matrixI)
					\fknm{k}{n}(r)%
				}%
				{\fknm{k}{n}(r)} 
				\mright) \, dr ,
			\end{align}
			where
			\begin{equation}
				\Lambdak{k}(r) \coloneqq \begin{pmatrix}
					\lambda_k(r) & 0 \\
					0 & \lambda_{k+1}(r) 
				\end{pmatrix} .
			\end{equation}
			\eqitem \label{item:identity 3D} 
			We have
			\begin{align}
				\mleft( \alpha_1 \sin{\theta} \cos{\varphi} 
				+ 
				\alpha_2 \sin{\theta} \sin{\varphi} 
				+ \alpha_3 \cos{\theta}
				\mright) \matrixEkn{k}{n}(\theta, \varphi)
				&= \matrixEkn{k}{n}(\theta, \varphi) (\sigma_{1} \otimes \matrixI) , \\
				\beta \matrixEkn{k}{n}(\theta, \varphi) &= \matrixEkn{k}{n}(\theta, \varphi) (\sigma_{3} \otimes \sigma_{3} ).
			\end{align}
		\end{eqenumerate}
	\end{lemma}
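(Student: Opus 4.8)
The plan is to prove the three assertions in the order \eqref{item:identity 3D}, \eqref{item:spherical harmonics decomposition 3D}, \eqref{item:Sf norm decomposition Dirac 3D}: the algebraic identities \eqref{item:identity 3D} are the heart of the lemma and fall out of Lemma~\ref{lem:identity for 3D} together with Corollary~\ref{cor:orthonormal basis for C^2}, and once the columns of the $\matrixEkn{k}{n}$ are known to form an orthonormal basis of $L^2(\S^2,\C^4)$ the remaining two parts go exactly as in the two-dimensional Lemma~\ref{lem:lemma 2D}. For \eqref{item:identity 3D}, abbreviate $\sigma\cdot\omega \coloneqq \sigma_1\sin\theta\cos\varphi + \sigma_2\sin\theta\sin\varphi + \sigma_3\cos\theta$. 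Multiplying \eqref{eq:lemma 1} on the right by $\vkn{k}{n}$, respectively by $\ukn{k+1}{n}$, and using $\Akn{k}{n}\vkn{k}{n} = \ukn{k+1}{n}$, $\transposeAkn{k-1}{n}\vkn{k}{n} = \zerovec$, $\Akn{k+1}{n}\ukn{k+1}{n} = \zerovec$, $\transposeAkn{k}{n}\ukn{k+1}{n} = \vkn{k}{n}$ from Corollary~\ref{cor:orthonormal basis for C^2}, one gets
\begin{equation}
	(\sigma\cdot\omega)\,\matrixYkn{k}{n}(\theta,\varphi)\,\vkn{k}{n} = \matrixYkn{k+1}{n}(\theta,\varphi)\,\ukn{k+1}{n}, \qquad (\sigma\cdot\omega)\,\matrixYkn{k+1}{n}(\theta,\varphi)\,\ukn{k+1}{n} = \matrixYkn{k}{n}(\theta,\varphi)\,\vkn{k}{n},
\end{equation}
that is, $\sigma\cdot\omega$ interchanges the two $\C^2$-valued angular functions $\matrixYkn{k}{n}\vkn{k}{n}$ and $\matrixYkn{k+1}{n}\ukn{k+1}{n}$ from which $\matrixEkn{k}{n}$ is assembled. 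With the chosen representation $\alpha_j=\sigma_1\otimes\sigma_j$, $\beta=\sigma_3\otimes\matrixI$, one has $\alpha_1\sin\theta\cos\varphi + \alpha_2\sin\theta\sin\varphi + \alpha_3\cos\theta = \sigma_1\otimes(\sigma\cdot\omega)$, so inspecting the four columns of $\matrixEkn{k}{n}$ shows that $\sigma_1\otimes(\sigma\cdot\omega)$ replaces its first column by its third and its second by its fourth (and conversely), hence equals right multiplication by $\sigma_1\otimes\matrixI$, while $\sigma_3\otimes\matrixI$ multiplies those four columns by $1,-1,-1,1$, hence equals right multiplication by $\sigma_3\otimes\sigma_3$. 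This is \eqref{item:identity 3D}.

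Note, for what follows, that each column of $\matrixEkn{k}{n}$ is homogeneous of a single degree, the first two of degree $k$ and the last two of degree $k+1$. For \eqref{item:spherical harmonics decomposition 3D}, by Theorem~\ref{thm:spherical harmonics decomposition} applied componentwise it suffices to show that the columns of $\{\matrixEkn{k}{n}\}_{k\ge0,\,-k-1\le n\le k}$, restricted to $\S^2$, form an orthonormal basis of $L^2(\S^2,\C^4)$. Orthonormality is elementary: the four columns of a fixed $\matrixEkn{k}{n}$ lie in orthogonal $\C^4$-slots or involve the distinct degrees $k,k+1$; columns from $(k,n)$ and $(k',n')$ with $n\ne n'$ are orthogonal, since in each $\C^4$-coordinate $\matrixEkn{k}{n}$ carries harmonics of a single azimuthal order by \eqref{eq:spherical harmonics}; and for $n=n'$ one uses $\Norm[L^2(\S^2)]{\Ykn{k}{n}}=1$, orthogonality of $\Ykn{k}{n},\Ykn{k'}{n}$ for $k\ne k'$, and $\innerproduct{\ukn{k}{n}}{\vkn{k}{n}}=0$, $\abs{\ukn{k}{n}}=\abs{\vkn{k}{n}}=1$ from Corollary~\ref{cor:orthonormal basis for C^2}. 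For completeness I would argue one degree at a time: $\matrixYkn{j}{n}=\operatorname{diag}(\Ykn{j}{n},\Ykn{j}{n+1})$ maps $\C^2$ isometrically onto the span of its two coordinate functions, and $\{\ukn{j}{n},\vkn{j}{n}\}$ is an orthonormal basis of $\C^2$, so for $-j\le n\le j-1$ the pair $\matrixYkn{j}{n}\ukn{j}{n},\matrixYkn{j}{n}\vkn{j}{n}$ spans that plane, while the extreme columns $\matrixYkn{j}{j}\vkn{j}{j}$ and $\matrixYkn{j}{-j-1}\vkn{j}{-j-1}$ contribute the last two missing harmonics $\Ykn{j}{j}$ and $\Ykn{j}{-j}$; tensoring with the two outer slots and counting ($4(2j+1)$ columns of degree $j$) shows that the degree-$j$ columns span the $4(2j+1)$-dimensional space of degree-$j$ vector spherical harmonics, so they form a basis of it, and completeness in $L^2(\S^2,\C^4)$ follows since these subspaces are dense.

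For \eqref{item:Sf norm decomposition Dirac 3D}, I would invoke Lemma~\ref{lem:Sf norm decomposition Dirac}, which — valid for any orthonormal basis of spherical harmonics — expresses $\Norm[L^2(\R^4,\C^4)]{\DiracS f}^2$ as $2\pi$ times the integral over $r>0$ of $\sum_{j\ge0}\lambda_j(r)$ multiplied by the sum of the squared $L^2(\S^2,\C^4)$-norms of the degree-$j$ parts of the angular profiles of $f_{+}$ and $f_{-}$ at radius $r$; this is basis-independent. Expanding those degree-$j$ parts in the orthonormal basis of degree-$j$ columns found above — the first two columns of each $\matrixEkn{j}{n}$, carrying the first two entries of $\fknpm{j}{n}(r)$, together with the last two columns of each $\matrixEkn{j-1}{n}$, carrying the last two entries of $\fknpm{j-1}{n}(r)$ — attaches weight $\lambda_j(r)$ to the first two entries of $\fknpm{j}{n}(r)$ and weight $\lambda_j(r)$ to the last two entries of $\fknpm{j-1}{n}(r)$. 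Reindexing the second contribution by $j\rightsquigarrow k+1$, the weights combine, for each $(k,n)$, into $\operatorname{diag}(\lambda_k(r),\lambda_k(r),\lambda_{k+1}(r),\lambda_{k+1}(r)) = \Lambdak{k}(r)\otimes\matrixI$ acting on $\fknpm{k}{n}(r)$, which is precisely \eqref{item:Sf norm decomposition Dirac 3D}. This is the exact analogue of the passage from Lemma~\ref{lem:Sf norm decomposition Dirac} to \eqref{item:Sf norm decomposition Dirac 2D} in dimension two.

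Given Lemma~\ref{lem:identity for 3D} and Corollary~\ref{cor:orthonormal basis for C^2}, the algebra above and the reindexing of the last step are routine; the genuine obstacle is the completeness part of \eqref{item:spherical harmonics decomposition 3D}, and inside it the two boundary orders $n=j$ and $n=-j-1$, where $\matrixYkn{j}{n}$ drops rank: there one must check that $\matrixYkn{j}{j}\vkn{j}{j}$ and $\matrixYkn{j}{-j-1}\vkn{j}{-j-1}$ are nonzero, i.e. that the surviving coordinate of the relevant $\vkn{j}{n}$ does not vanish. I would verify this by computing $\Akn{j}{j}$ and $\Akn{j+1}{j}$ from the explicit normalizing constants \eqref{eq:normalizing} and using their positivity — mechanical, but the fussiest point in the argument.
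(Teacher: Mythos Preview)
Your proposal is correct and follows the paper's approach: \eqref{item:identity 3D} via Lemma~\ref{lem:identity for 3D} and Corollary~\ref{cor:orthonormal basis for C^2}, and \eqref{item:spherical harmonics decomposition 3D}, \eqref{item:Sf norm decomposition Dirac 3D} from Theorem~\ref{thm:spherical harmonics decomposition} and Lemma~\ref{lem:Sf norm decomposition Dirac}, though the paper dispatches the latter two in one line each without ever discussing completeness at the boundary orders. Your concern there is legitimate but resolves without computing $\Akn{j}{j}$ explicitly: since $\sigma\cdot\omega$ is a pointwise unitary on $\C^2$ and you have already shown $(\sigma\cdot\omega)\,\matrixYkn{j}{n}\vkn{j}{n}=\matrixYkn{j+1}{n}\ukn{j+1}{n}$, the unit $L^2(\S^2,\C^2)$-norm of the right side (both diagonal entries of $\matrixYkn{j+1}{n}$ are genuine harmonics for $-j-1\le n\le j$) forces $\matrixYkn{j}{n}\vkn{j}{n}$ to have unit norm as well.
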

	\begin{proof}[Proof of Lemma \ref{lem:identity for 3D Ekn}]
		\eqref{item:spherical harmonics decomposition 3D} and \eqref{item:Sf norm decomposition Dirac 3D} are immediate from Theorem \ref{thm:spherical harmonics decomposition} and Lemma \ref{lem:Sf norm decomposition Dirac}, respectively. 
		The second and third identities of \eqref{item:identity 3D} are also easy:
		\begin{align}
			&\quad \beta \matrixEkn{k}{n} (\theta, \varphi) \\
			&= \begin{pmatrix}
				\matrixI & \matrixO \\
				\matrixO & - \matrixI
			\end{pmatrix}
			\begin{pmatrix}
				\matrixYkn{k}{n} \vkn{k}{n} & \zerovec & \zerovec & \matrixYkn{k+1}{n} \ukn{k+1}{n}  \\
				\zerovec & \matrixYkn{k}{n} \vkn{k}{n} & \matrixYkn{k+1}{n} \ukn{k+1}{n}  & \zerovec
			\end{pmatrix} 
			\\
			&= 
			\begin{pmatrix}
				\matrixYkn{k}{n} \vkn{k}{n} & \zerovec  & \zerovec &  \matrixYkn{k+1}{n} \ukn{k+1}{n} \\
				\zerovec & -  \matrixYkn{k}{n} \vkn{k}{n} & - \matrixYkn{k+1}{n} \ukn{k+1}{n} & \zerovec  
			\end{pmatrix} \\
			&= 
			\begin{pmatrix}
				\matrixYkn{k}{n} \vkn{k}{n} & \zerovec & \zerovec & \matrixYkn{k+1}{n} \ukn{k+1}{n}  \\
				\zerovec & \matrixYkn{k}{n} \vkn{k}{n} & \matrixYkn{k+1}{n} \ukn{k+1}{n}  & \zerovec
			\end{pmatrix} 
			\begin{pmatrix}
				1 &0  & 0 & 0 \\
				0  & -1 & 0 & 0 \\
				0 & 0 & -1 &0 \\
				0 &0 & 0  & 1
			\end{pmatrix} \\
			&=  \matrixEkn{k}{n} (\theta, \varphi) (\sigma_{3} \otimes \sigma_{3} ) ,
		\end{align}
		\begin{align}
			&\quad ( \sigma_2 \otimes \matrixI ) \matrixEkn{k}{n} (\theta, \varphi) \\
			&= \begin{pmatrix}
				\matrixO & - i \matrixI \\
				i \matrixI & \matrixO
			\end{pmatrix}
			\begin{pmatrix}
				\matrixYkn{k}{n} \vkn{k}{n} & \zerovec & \zerovec & \matrixYkn{k+1}{n} \ukn{k+1}{n}  \\
				\zerovec & \matrixYkn{k}{n} \vkn{k}{n} & \matrixYkn{k+1}{n} \ukn{k+1}{n}  & \zerovec
			\end{pmatrix} 
			\\
			&= 
			\begin{pmatrix}
				\zerovec & - i \matrixYkn{k}{n} \vkn{k}{n} & - i \matrixYkn{k+1}{n} \ukn{k+1}{n} & \zerovec \\
				i \matrixYkn{k}{n} \vkn{k}{n} & \zerovec & \zerovec & i \matrixYkn{k+1}{n} \ukn{k+1}{n}
			\end{pmatrix} \\
			&= 
			\begin{pmatrix}
				\matrixYkn{k}{n} \vkn{k}{n} & \zerovec & \zerovec & \matrixYkn{k+1}{n} \ukn{k+1}{n}  \\
				\zerovec & \matrixYkn{k}{n} \vkn{k}{n} & \matrixYkn{k+1}{n} \ukn{k+1}{n}  & \zerovec
			\end{pmatrix} 
			\begin{pmatrix}
				0 & - i  & 0 & 0 \\
				i  & 0 & 0 & 0  \\
				0 & 0 & 0 &  i  \\
				0 &0 & - i  & 0
			\end{pmatrix} \\
			&=  \matrixEkn{k}{n} (\theta, \varphi) (\sigma_3 \otimes \sigma_2 ) .
		\end{align}
		The first identity of \eqref{item:identity 3D} follows from Lemma \ref{lem:identity for 3D} and Corollary \ref{cor:orthonormal basis for C^2}. Since
		\begin{gather}
			\begin{aligned}
				\mleft( \sigma_1 \sin{\theta} \cos{\varphi} 
				+ 
				\sigma_2 \sin{\theta} \sin{\varphi} 
				+ \sigma_3 \cos{\theta}
				\mright)
				\matrixYkn{k}{n} \ukn{k}{n} 
				\underset{\text{Lemma \ref{lem:identity for 3D}}}&{=} ( \matrixYkn{k+1}{n} \Akn{k}{n} + \matrixYkn{k-1}{n} \transposeAkn{k-1}{n} ) \ukn{k}{n} \\
				\underset{\text{Corollary \ref{cor:orthonormal basis for C^2}}}&{=} \matrixYkn{k-1}{n} \vkn{k-1}{n} , 	
			\end{aligned}\\
			\begin{aligned}
				\mleft( \sigma_1 \sin{\theta} \cos{\varphi} 
				+ 
				\sigma_2 \sin{\theta} \sin{\varphi} 
				+ \sigma_3 \cos{\theta}
				\mright)
				\matrixYkn{k}{n} \vkn{k}{n} 
				\underset{\text{Lemma \ref{lem:identity for 3D}}}&{=} ( \matrixYkn{k+1}{n} \Akn{k}{n} + \matrixYkn{k-1}{n} \transposeAkn{k-1}{n} ) \vkn{k}{n} \\
				\underset{\text{Corollary \ref{cor:orthonormal basis for C^2}}}&{=} \matrixYkn{k+1}{n} \ukn{k+1}{n} ,
			\end{aligned}
		\end{gather}
		we have
		\begin{align}
			&\quad \mleft( \alpha_1 \sin{\theta} \cos{\varphi} 
			+ 
			\alpha_2 \sin{\theta} \sin{\varphi} 
			+ \alpha_3 \cos{\theta}
			\mright) \matrixEkn{k}{n}(\theta, \varphi) \\
			&= ( \sigma_1 \otimes ( \sigma_1 \sin{\theta} \cos{\varphi} 
			+ 
			\sigma_2 \sin{\theta} \sin{\varphi} 
			+ \sigma_3 \cos{\theta} ) )
			\begin{pmatrix}
				\matrixYkn{k}{n} \vkn{k}{n} & \zerovec & \zerovec & \matrixYkn{k+1}{n} \ukn{k+1}{n} \\
				\zerovec & \matrixYkn{k}{n} \vkn{k}{n} & \matrixYkn{k+1}{n} \ukn{k+1}{n} & \zerovec
			\end{pmatrix} \\
			&= 
			\begin{pmatrix}
				\zerovec  & \matrixYkn{k+1}{n} \ukn{k+1}{n} & \matrixYkn{k}{n} \vkn{k}{n} & \zerovec  \\
				\matrixYkn{k+1}{n} \ukn{k+1}{n} & \zerovec & \zerovec & \matrixYkn{k}{n} \vkn{k}{n}
			\end{pmatrix} \\
			&= \matrixEkn{k}{n} (\sigma_{1} \otimes \matrixI). \quad \qedhere
		\end{align}
	\end{proof}
	Now we prove the following result:
	\begin{theorem} \label{thm:3D Dirac}
		We have
		\begin{equation}
			\norm{\DiracS}_{L^2(\R^3, \C^4) \to L^2(\R^4, \C^4)}^2 = 2 \pi \Diraclambdasup ,
		\end{equation}
		where
		\begin{gather}
			\Diraclambdasup
			\coloneqq \sup_{k \in \N} \sup_{r > 0} \Diraclambdak{k}(r), \\
			\Diraclambdak{k}(r) \coloneqq 
			\frac{1}{2} ( \lambda_k(r) + \lambda_{k+1}(r) ) + \frac{m}{2\phi_{m}(r)} \abs{ \lambda_k(r) - \lambda_{k+1}(r) } .
		\end{gather}
		Regarding extremisers, let $f \in L^2(\R^3, \C^4)$ be such that
		\begin{equation}
			f(\xi) = r^{-1} \sum_{k = 0}^{\infty} \sumnk 
			\matrixEkn{k}{n}(\theta, \varphi) 
			\fkn{k}{n}(r) .
		\end{equation}
		Then the following are equivalent:
		\begin{eqenumerate}
			\eqitem \label{item:extremiser 3D Dirac}
			The equality 
			\begin{equation}
				\norm{\DiracS f}_{L^2(\R^4, \C^4)}^2 = 2 \pi \Diraclambdasup \norm{f}_{L^2(\R^3, \C^4)}^2 
			\end{equation}
			holds.
			\eqitem \label{item:extremiser condition 3D Dirac}
			The functions $\{ \fkn{k}{n} \}$ satisfy
			\begin{gather}
				\supp{ \fkn{k}{n} } \subset \DiracL_k \coloneqq \set{ r > 0 }{ \Diraclambdak{k}(r) = \Diraclambdasup } , \\
				\fkn{k}{n}(r)
				\in W_k(r) , \quad \text{ a.e. } r > 0 
			\end{gather}
			for each $k \geq 0$ and $- k - 1 \leq n \leq k$, 
			where
			\begin{align}
				&W_k(r) \\
				&= 
				\begin{dcases}
					\C^4, & m ( \lambda_k(r) - \lambda_{k+1}(r) ) = 0 , \\
					\text{the eigenspace of $m \sigma_3 \otimes \matrixI + r \sigma_1 \otimes \sigma_3$ associated with $\phi_{m}(r)$}, 
					& m ( \lambda_k(r) - \lambda_{k+1}(r) )  > 0 , \\
					\text{the eigenspace of $m \sigma_3 \otimes \matrixI + r \sigma_1 \otimes \sigma_3$ associated with $- \phi_{m}(r)$},  
					& m ( \lambda_k(r) - \lambda_{k+1}(r) )  < 0 ,
				\end{dcases}
			\end{align}
		\end{eqenumerate}
		As a consequence, extremisers exist if and only if there exists 
		$k \in \N$ such that $\measure{\DiracL_k} > 0$.
	\end{theorem}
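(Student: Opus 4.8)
The plan is to follow the proof of Theorem \ref{thm:2D Dirac} essentially line by line, with Lemma \ref{lem:identity for 3D Ekn} (together with Corollary \ref{cor:orthonormal basis for C^2}) playing the role of Lemma \ref{lem:lemma 2D}; the only genuinely new computation is the diagonalization of one explicit $4 \times 4$ matrix. First I would fix $f \in L^2(\R^3, \C^4)$, write it as in \eqref{item:spherical harmonics decomposition 3D}, namely $f(\xi) = r^{-1} \sum_{k=0}^{\infty} \sumnk \matrixEkn{k}{n}(\theta, \varphi) \fkn{k}{n}(r)$, and compute $\fpm$. With $\xi = (r \sin\theta \cos\varphi, r \sin\theta \sin\varphi, r \cos\theta)$ one has $\sum_{j=1}^{3} \alpha_j \xi_j = r ( \alpha_1 \sin\theta \cos\varphi + \alpha_2 \sin\theta \sin\varphi + \alpha_3 \cos\theta )$, so the two commutation relations in \eqref{item:identity 3D} give
\begin{equation}
	m \beta f(\xi) + \sum_{j=1}^{3} \alpha_j \xi_j f(\xi) = r^{-1} \sum_{k=0}^{\infty} \sumnk \matrixEkn{k}{n}(\theta, \varphi) \mleft( m \, \sigma_3 \otimes \sigma_3 + r \, \sigma_1 \otimes \matrixI \mright) \fkn{k}{n}(r) .
\end{equation}
Putting $M(r) \coloneqq m \, \sigma_3 \otimes \sigma_3 + r \, \sigma_1 \otimes \matrixI$, which is Hermitian and satisfies $\norm{ M(r)/\phi(r) } \leq \sqrt{2}$, this shows that the decomposition of $\fpm$ required in \eqref{item:Sf norm decomposition Dirac 3D} has coefficients $\fknpm{k}{n}(r) = \frac{1}{2} \mleft( \matrixI \pm \phi(r)^{-1} M(r) \mright) \fkn{k}{n}(r)$, which indeed lie in $L^2(\positiveR, \C^4)$.

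Second, I would substitute these into the norm identity of \eqref{item:Sf norm decomposition Dirac 3D}. Since $M(r)$ and $\matrixI \pm \phi(r)^{-1} M(r)$ are Hermitian, the cross terms cancel when the $+$ and $-$ contributions are added, and one gets
\begin{equation}
	\norm{\DiracS f}_{L^2(\R^4, \C^4)}^2 = 2\pi \sum_{k=0}^{\infty} \sumnk \int_0^\infty \innerproduct{ \DiracLambdak{k}(r) \fkn{k}{n}(r) }{ \fkn{k}{n}(r) } \, dr, \qquad 2 \DiracLambdak{k}(r) = (\Lambdak{k}(r) \otimes \matrixI) + \frac{ M(r) (\Lambdak{k}(r) \otimes \matrixI) M(r) }{ \phi(r)^2 } ,
\end{equation}
which is block diagonal in the index $(k, n)$. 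Expanding $M (\Lambdak{k} \otimes \matrixI) M$ exactly as in the $d = 2$ computation --- using $\sigma_3 \Lambdak{k} \sigma_3 = \Lambdak{k}$, $\sigma_3 \Lambdak{k} \sigma_1 + \sigma_1 \Lambdak{k} \sigma_3 = (\lambda_k - \lambda_{k+1}) \sigma_1$, the fact that $\sigma_1 \Lambdak{k} \sigma_1$ is $\Lambdak{k}$ with its diagonal entries interchanged, and $r^2 = \phi^2 - m^2$ --- collapses everything to
\begin{equation}
	\DiracLambdak{k}(r) = \frac{ \lambda_k(r) + \lambda_{k+1}(r) }{ 2 } \, (\matrixI \otimes \matrixI) + \frac{ m ( \lambda_k(r) - \lambda_{k+1}(r) ) }{ 2 \, \phi(r)^2 } \mleft( m \, \sigma_3 \otimes \matrixI + r \, \sigma_1 \otimes \sigma_3 \mright) .
\end{equation}

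The key point --- the exact analogue of ``the eigenvalues of $m \sigma_3 + r \sigma_1$ are $\pm \phi(r)$'' in the $d = 2$ proof --- is that $(m \, \sigma_3 \otimes \matrixI + r \, \sigma_1 \otimes \sigma_3)^2 = (m^2 + r^2)(\matrixI \otimes \matrixI) = \phi(r)^2 (\matrixI \otimes \matrixI)$, because $\sigma_1$ and $\sigma_3$ anticommute; hence this matrix has eigenvalues $\pm \phi(r)$, each of multiplicity $2$. It follows that the maximal eigenvalue of $\DiracLambdak{k}(r)$ equals $\Diraclambdak{k}(r)$ with associated eigenspace $W_k(r)$ as in the statement. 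From here the argument copies the proof of Theorem \ref{thm:2D Dirac} verbatim: the pointwise bound $\innerproduct{ \DiracLambdak{k}(r) \fkn{k}{n}(r) }{ \fkn{k}{n}(r) } \leq \Diraclambdak{k}(r) \abs{ \fkn{k}{n}(r) }^2 \leq \Diraclambdasup \abs{ \fkn{k}{n}(r) }^2$ together with \eqref{item:spherical harmonics decomposition 3D} gives $\norm{\DiracS}^2 \leq 2\pi \Diraclambdasup$; the reverse inequality is obtained, for a given $\varepsilon > 0$, by testing on $f(\xi) = r^{-1} \matrixEkn{k}{n}(\theta, \varphi) \fkn{k}{n}(r)$ with $k$ chosen so that $\set{ r > 0 }{ \Diraclambdak{k}(r) \geq \Diraclambdasup - \varepsilon }$ has positive measure and $\fkn{k}{n}$ supported there with $\fkn{k}{n}(r) \in W_k(r)$ a.e.; and the equivalence of \eqref{item:extremiser 3D Dirac} and \eqref{item:extremiser condition 3D Dirac}, as well as the final existence claim (using that $W_k(r) \neq \{ \zerovec \}$ always, so a nonzero admissible test function exists precisely when $\measure{\DiracL_k} > 0$), follows word for word as in Section \ref{section:proof for 2D}.

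I do not expect a serious obstacle here: all the real work is already contained in Lemma \ref{lem:identity for 3D Ekn}, which block-diagonalizes the infinite-dimensional matrix into the $4 \times 4$ blocks $\DiracLambdak{k}(r)$. The one point to keep track of is that the matrix $M(r) = m \, \sigma_3 \otimes \sigma_3 + r \, \sigma_1 \otimes \matrixI$ entering $\fpm$ is \emph{not} the same as the matrix $m \, \sigma_3 \otimes \matrixI + r \, \sigma_1 \otimes \sigma_3$ governing $\DiracLambdak{k}(r)$ --- the latter is produced by the conjugation $M (\Lambdak{k} \otimes \matrixI) M$ via the Pauli identities --- but both square to $\phi(r)^2 (\matrixI \otimes \matrixI)$, which is exactly what makes the eigenvalue computation as painless as in the two-dimensional case.
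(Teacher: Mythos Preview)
Your proposal is correct and follows essentially the same route as the paper's own proof: both compute $\fpm$ via the commutation identities in Lemma \ref{lem:identity for 3D Ekn}, reduce $\norm{\DiracS f}^2$ to a sum of quadratic forms with the explicit $4 \times 4$ matrix $\DiracLambdak{k}(r) = \tfrac{1}{2}(\lambda_k + \lambda_{k+1})\matrixI_4 + \tfrac{m(\lambda_k - \lambda_{k+1})}{2\phi^2}(m\,\sigma_3 \otimes \matrixI + r\,\sigma_1 \otimes \sigma_3)$, read off its eigenvalues from the fact that the second tensor squares to $\phi(r)^2 \matrixI_4$, and then copy the $d=2$ argument for the upper bound, sharpness, and extremiser characterization. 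Your closing remark distinguishing $M(r)$ from the matrix governing $\DiracLambdak{k}(r)$ is a nice clarification; note incidentally that $M(r)^2 = \phi(r)^2 \matrixI_4$ as well, so in fact $\norm{M(r)/\phi(r)} = 1$ rather than merely $\leq \sqrt{2}$.
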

	
	\begin{proof}[Proof of Theorem \ref{thm:3D Dirac}]
		Let $f \in L^2(\R^3, \C^4)$ be such that 
		\begin{gather}
			f(\xi) =  r^{-1} \sum_{k = 0}^{\infty} \sumnk 
			\matrixEkn{k}{n}(\theta, \varphi)  \fkn{k}{n}(r) ,
			\quad \xi = (r \sin \theta \cos \varphi , r \sin \theta \sin \varphi, \cos \theta) ,  \\
			\norm{f}_{L^2(\R^3, \C^4)}^2 = \sum_{k = 0}^{\infty} \sumnk \norm{\fkn{k}{n}}_{L^2(\positiveR, \C^4)}^2  . 
		\end{gather}
		At first we need to compute $\fpm$.
		By \eqref{item:identity 3D}, we have
		\begin{align}
			\fpm
			&= 
			\frac{1}{2} \mleft( f \pm \frac{1}{\phi_{m}} \bigg( m \beta f + \sum_{j=1}^{3} \alpha_j \xi_j f \bigg) \mright) \\
			&= 
			\frac{1}{2r} \sum_{k=0}^{\infty} \sumnk 
			\matrixEkn{k}{n}(\theta, \varphi)
			\mleft(\matrixI_4 \pm \frac{1}{\phi_{m}} (m \sigma_3 \otimes \sigma_3 + r \sigma_1 \otimes \matrixI)\mright)
			\fkn{k}{n}(r) .
		\end{align}
		Therefore, \eqref{item:Sf norm decomposition Dirac 3D} implies
		\begin{equation}
			\norm{\DiracS f}_{L^2(\R^{4}, \C^4)}^2 \\ 
			= 2 \pi \sum_{k = 0}^{\infty} \sumnk \int_{0}^{\infty} 
			\innerproduct{
				\DiracLambdak{k}(r) \fkn{k}{n}(r)
			}
			{
				\fkn{k}{n}(r)
			}  \, dr ,
		\end{equation}
		where
		\begin{align}
			&\quad 2 \DiracLambdak{k}(r) \\
			&\coloneqq \frac{1}{2} (\matrixI_4 + \frac{1}{\phi_{m}} (m \sigma_3 \otimes \sigma_3 + r \sigma_1 \otimes \matrixI))
			(\Lambdak{k} \otimes \matrixI)
			(\matrixI_4 + \frac{1}{\phi_{m}} (m \sigma_3 \otimes \sigma_3 + r \sigma_1 \otimes \matrixI)) \\
			&\quad+ \frac{1}{2} (\matrixI_4 - \frac{1}{\phi_{m}} (m \sigma_3 \otimes \sigma_3 + r \sigma_1 \otimes \matrixI))
			(\Lambdak{k} \otimes \matrixI)
			(\matrixI_4 - \frac{1}{\phi_{m}} (m \sigma_3 \otimes \sigma_3 + r \sigma_1 \otimes \matrixI)) \\
			&= \Lambdak{k} \otimes \matrixI
			+ \frac{1}{\phi_{m}^2} (m \sigma_3 \otimes \sigma_3 + r \sigma_1 \otimes \matrixI) (\Lambdak{k} \otimes \matrixI) (m \sigma_3 \otimes \sigma_3 + r \sigma_1 \otimes \matrixI) \\
			&= \Lambdak{k} \otimes \matrixI
			+ \frac{1}{\phi_{m}^2} (m^2 \sigma_3 \Lambdak{k} \sigma_3 \otimes \matrixI + mr ( \sigma_3 \Lambdak{k} \sigma_1 +  \sigma_1 \Lambdak{k} \sigma_3 ) \otimes \sigma_3 + r^2 \sigma_1 \Lambdak{k} \sigma_1 \otimes \matrixI ) \\
			&= \Lambdak{k} \otimes \matrixI
			+ \frac{1}{\phi_{m}^2} (m^2 \Lambdak{k} \otimes \matrixI + mr ( \lambda_k - \lambda_{k+1} ) \sigma_{1} \otimes \sigma_{3}  + (\phi_{m}^2 - m^2) \sigma_1 \Lambdak{k} \sigma_1 \otimes \matrixI ) \\
			&=  (\Lambdak{k} + \sigma_1 \Lambdak{k} \sigma_1) \otimes \matrixI
			+ \frac{m}{\phi_{m}^2} (m ( \Lambdak{k} - \sigma_1 \Lambdak{k} \sigma_1 ) \otimes \matrixI + r ( \lambda_k - \lambda_{k+1} ) \sigma_{1} \otimes \sigma_{3} ) ) \\
			&= (\lambda_k + \lambda_{k+1}) \matrixI_4
			+ \frac{m}{\phi_{m}^2} (\lambda_k - \lambda_{k+1}) (m \sigma_3 \otimes \matrixI + r \sigma_1 \otimes \sigma_3 ) .
		\end{align}
		Now we need to determine the maximal eigenvalue of $\DiracLambdak{k}(r)$ and its associated eigenspace. 
		Since eigenvalues of the matrix $m \sigma_3 \otimes \matrixI + r \sigma_1 \otimes \sigma_3$ are $\pm \phi_{m}(r)$, 
		we conclude that the maximal eigenvalue of $\DiracLambdak{k}(r)$ and its associated eigenspace are
		\begin{equation}
			\Diraclambdak{k}(r) 
			= \frac{1}{2} ( \lambda_k(r) + \lambda_{k+1}(r) ) + \frac{m}{2\phi_{m}(r)} \abs{ \lambda_k(r) - \lambda_{k+1}(r) }
		\end{equation}
		and
		\begin{align}
			&W_k(r) \\ 
			&= 
			\begin{dcases}
				\C^4, & m ( \lambda_k(r) - \lambda_{k+1}(r) ) = 0 , \\
				\text{the eigenspace of $m \sigma_3 \otimes \matrixI + r \sigma_1 \otimes \sigma_3$ associated with $\phi_{m}(r)$}, 
				& m ( \lambda_k(r) - \lambda_{k+1}(r) )  > 0 , \\
				\text{the eigenspace of $m \sigma_3 \otimes \matrixI + r \sigma_1 \otimes \sigma_3$ associated with $- \phi_{m}(r)$},  
				& m ( \lambda_k(r) - \lambda_{k+1}(r) )  < 0 ,
			\end{dcases}
		\end{align}
		respectively.
		Therefore, we have
		\begin{align}
			\norm{\DiracS f}_{L^2(\R^{4}, \C^4)}^2 
			&= 2 \pi \sum_{k = 0}^{\infty} \sumnk \int_{0}^{\infty} 
			\innerproduct{
				\DiracLambdak{k}(r) \fkn{k}{n}(r)
			}
			{
				\fkn{k}{n}(r)
			}  \, dr \\
			&\leq 2 \pi \sum_{k = 0}^{\infty} \sumnk \int_{0}^{\infty}  \Diraclambdak{k}(r) \abs{\fkn{k}{n}(r)}^2 \, dr \\
			&\leq 2 \pi \Diraclambdasup \norm{f}_{L^2(\R^3, \C^4)}^2 
		\end{align}
		and hence
		\begin{equation}
			\norm{\DiracS}^2 \leq 2 \pi \Diraclambdasup .
		\end{equation}
		
		The equality $\norm{\DiracS}^2 = 2 \pi \Diraclambdasup$ and the characterization of extremisers \eqref{item:extremiser 3D Dirac}$\iff$\eqref{item:extremiser condition 3D Dirac} can be proved by the same argument as in the proof of Theorem \ref{thm:2D Dirac}, so we omit the details.
	\end{proof}
	\section{Explicit values} \label{section:explicit value}
	In this section, we prove Theorems \ref{thm:type B Dirac} and \ref{thm:type C Dirac}.
	In order to prove Theorem \ref{thm:type B Dirac}, we use the following lemma:
	\begin{quotelemma}[\cite{BS2017}] \label{lem:BS 1.6}
		Let $d \geq 2$, $1 < s < d$, $a > 0$, and suppose that $( w, \psi, \phi )$ satisfy
		\begin{equation}
			w(r) = r^{-s}, \quad \psi(r)^2 = a r^{1-s} \abs{ \phi'(r) } .
		\end{equation}
		Then we have
		\begin{equation}
			\frac{1}{(2\pi)^{d-1}} \lambda_k(r) = a c_k \coloneqq 2^{2-s} a \pi \frac{ \Gamma(s-1) \Gamma((d-s)/2 + k) }{ ( \Gamma(s/2) )^2 \Gamma( (d + s)/2 + k - 1 ) } .
		\end{equation}
		Furthermore, $\{ c_k \}_{k \in \N}$ is strictly decreasing.
		For example, if $d \geq 3$ and $s = 2$, then
		\begin{equation}
			\frac{1}{(2\pi)^{d-1}} \lambda_k(r) = a c_k = a \frac{2 \pi }{d + 2k - 2} .
		\end{equation}
	\end{quotelemma}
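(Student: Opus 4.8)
The plan is a direct computation from formula \eqref{eq:lambda_k} for $\lambda_k$, in three stages: first reduce $\lambda_k(r)$ to a single integral over $[-1,1]$, then evaluate that integral via the Rodrigues formula and a Beta integral, and finally tidy up the $\Gamma$-factors. For the first stage, since the weight is $w(\abs{\,\cdot\,})=\abs{\,\cdot\,}^{-s}$ with $0<s<d$, the classical formula for the Fourier transform of $\abs{x}^{-s}$ as a tempered distribution gives $\widehat{\abs{\,\cdot\,}^{-s}}(\xi)=2^{d-s}\pi^{d/2}\frac{\Gamma((d-s)/2)}{\Gamma(s/2)}\abs{\xi}^{s-d}$ in our convention; comparing with \eqref{w_Fourier} and writing $\rho=\tfrac12\abs{\xi}^2$ yields $F_w(\rho)=2^{(d-s)/2}\pi^{d/2}\frac{\Gamma((d-s)/2)}{\Gamma(s/2)}\rho^{(s-d)/2}$. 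Substituting $F_w(r^2(1-t))$ into \eqref{eq:lambda_k} and using the hypothesis $\psi(r)^2=a\,r^{1-s}\abs{\phi'(r)}$, which makes $\frac{r^{d-1}\psi(r)^2}{\abs{\phi'(r)}}=a\,r^{d-s}$, all powers of $r$ cancel and one is left with the $r$-independent expression
\[
\lambda_k(r)=a\,\abs{\S^{d-2}}\,2^{(d-s)/2}\pi^{d/2}\frac{\Gamma((d-s)/2)}{\Gamma(s/2)}\,I_k,\qquad I_k\coloneqq\int_{-1}^1(1-t)^{(s-d)/2}(1-t^2)^{(d-3)/2}p_{d,k}(t)\,dt.
\]

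For the second stage I would insert the Rodrigues formula \eqref{Legendre} for $(1-t^2)^{(d-3)/2}p_{d,k}(t)$ into $I_k$ and integrate by parts $k$ times. All boundary terms vanish: the derivatives of $(1-t^2)^{k+(d-3)/2}$ of order $\le k-1$ vanish at $t=\pm1$ because $d>1$, and the leftover factor near $t=1$ is $(1-t)^{(s-1)/2}\to0$ because $s>1$ — this is exactly where the restriction $1<s<d$ enters, both here and for integrability of $I_k$. Using $\frac{d^k}{dt^k}(1-t)^{(s-d)/2}=\frac{\Gamma((d-s)/2+k)}{\Gamma((d-s)/2)}(1-t)^{(s-d)/2-k}$ together with $(1-t)^{(s-d)/2-k}(1-t^2)^{k+(d-3)/2}=(1-t)^{(s-3)/2}(1+t)^{k+(d-3)/2}$, the surviving integral is a Beta integral,
\[
\int_{-1}^1(1-t)^{(s-3)/2}(1+t)^{k+(d-3)/2}\,dt=2^{(d+s)/2+k-2}\,\frac{\Gamma((s-1)/2)\,\Gamma(k+(d-1)/2)}{\Gamma(k+(d+s)/2-1)}.
\]

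For the third stage, collecting all factors (with $\abs{\S^{d-2}}=2\pi^{(d-1)/2}/\Gamma((d-1)/2)$) and simplifying gives $\frac{1}{(2\pi)^{d-1}}\lambda_k=a\,\pi^{1/2}\frac{\Gamma((s-1)/2)\,\Gamma((d-s)/2+k)}{\Gamma(s/2)\,\Gamma((d+s)/2+k-1)}$; applying the Legendre duplication formula in the form $\Gamma(\tfrac{s-1}{2})\Gamma(\tfrac{s}{2})=2^{2-s}\sqrt{\pi}\,\Gamma(s-1)$ rewrites $\pi^{1/2}\Gamma((s-1)/2)/\Gamma(s/2)$ as $2^{2-s}\pi\,\Gamma(s-1)/(\Gamma(s/2))^2$, producing exactly $a c_k$. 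Monotonicity is then immediate from $c_{k+1}/c_k=\frac{(d-s)/2+k}{(d+s)/2+k-1}<1$, since $\tfrac{d+s}{2}-1-\tfrac{d-s}{2}=s-1>0$ and both entries are positive for $k\ge0$, $d\ge2$; and the displayed case $d\ge3$, $s=2$ follows by plugging in $s=2$ and using $\Gamma(d/2+k)=(\tfrac{d-2}{2}+k)\Gamma(\tfrac{d-2}{2}+k)$, which gives $c_k=2\pi/(d+2k-2)$. The argument is essentially mechanical: the only non-bookkeeping step is recognising the final $\Gamma$-quotient via the duplication formula, and the sole (minor) obstacle is checking that no boundary terms survive the $k$-fold integration by parts, which the hypothesis $1<s<d$ guarantees.
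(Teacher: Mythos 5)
Your computation is correct and complete; I checked each step. The Fourier transform of $\abs{x}^{-s}$ in the paper's (no-$2\pi$) convention is indeed $2^{d-s}\pi^{d/2}\,\Gamma((d-s)/2)/\Gamma(s/2)\,\abs{\xi}^{s-d}$, and after the change of variable $\rho=\tfrac12\abs{\xi}^2$ your $F_w(\rho)=2^{(d-s)/2}\pi^{d/2}\,\Gamma((d-s)/2)/\Gamma(s/2)\,\rho^{(s-d)/2}$ follows. The cancellation of $r$-powers via $\psi(r)^2=ar^{1-s}\abs{\phi'(r)}$ is exactly right, the $k$-fold integration by parts from the Rodrigues formula is legitimate because each boundary term behaves like $(1-t)^{(s-1)/2}$ at $t=1$ (so $s>1$ is used) and $(1+t)^{(d-1)/2+j}$ at $t=-1$, the Beta integral and duplication-formula simplification are accurate, and the ratio $c_{k+1}/c_k=\bigl(\tfrac{d-s}{2}+k\bigr)/\bigl(\tfrac{d+s}{2}+k-1\bigr)<1$ correctly uses $s>1$.

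Note that the paper does not prove this lemma itself; it is quoted and the proof is delegated to \cite[Proof of Theorem~1.6]{BS2017}. Your argument is a self-contained derivation of the same formula. As far as I can tell it follows the same essential mechanism as Bez--Sugimoto's proof (compute $F_w$ explicitly, plug into the Funk--Hecke integral $\lambda_k$, and evaluate the resulting Gegenbauer-type integral), with the difference that you derive the key one-dimensional integral directly from the Rodrigues formula rather than quoting it as a known special-function identity. This makes your write-up slightly longer but entirely self-contained, which is a fair trade.
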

	The proof of Lemma \ref{lem:BS 1.6} can be found in \cite[Proof of Theorem 1.6]{BS2017}.
	Note that \cite[Theorem 1.6]{BS2017} is immediate from Theorem \ref{thm:Schrodinger} and Lemma \ref{lem:BS 1.6}. 
	In fact, we have
	\begin{align}
		C_{d}(r^{-s},r^{(2-s)/2},r^2) 
		&= \frac{1}{(2\pi)^{d-1}}\sup_{k\in\N}\sup_{r>0}\lambda_{k}(r) \\
		&=  c_0 / 2 \\
		&= 2^{1-s} \pi \frac{ \Gamma(s-1) \Gamma((d-s)/2) }{ ( \Gamma(s/2) )^2 \Gamma( (d + s)/2 - 1 ) } .
	\end{align}
	\begin{proof}[Proof of Theorem \ref{thm:type B Dirac}]
		At first, we prove the case $d=2, 3$.
		In this case,
		Lemma \ref{lem:BS 1.6} implies
		\begin{equation}
			\Diraclambda_{k}(r) = \frac{1}{2} \mleft( c_k + c_{k+1} + \frac{m}{\sqrt{r^2+m^2}} ( c_{k} - c_{k+1} ) \mright) ,
		\end{equation}
		and so that 
		\begin{gather}
			\begin{aligned}
				\sup_{k \in \N} \sup_{r > 0} \Diraclambda_{k}(r)
				&= \sup_{k \in \N} \lim_{r \to +0} \Diraclambda_{k}(r) \\
				&=
				\begin{cases}
					\sup_{k \in \N} \frac{1}{2} \mleft( c_k + c_{k+1}  \mright) , & m = 0 , \\
					\sup_{k \in \N} c_k , & m > 0 
				\end{cases} \\
				&= \begin{cases}
					(c_0 + c_1)/2 , & m = 0 , \\
					c_0 , & m > 0 ,
				\end{cases} 
			\end{aligned} \\
			\set{ r > 0 }{ \Diraclambdak{k}(r) = \Diraclambdasup }
			= 
			\begin{cases}
				\positiveR, & m = k = 0, \\
				\emptyset, & \text{else.} 
			\end{cases} 
		\end{gather}
		Combining these with Theorems \ref{thm:2D Dirac} and \ref{thm:3D Dirac}, we obtain the desired result in the case $d = 2, 3$.
		
		In the case $d \geq 4$ with $m > 0$, notice that we have
		\begin{equation}
			\frac{1}{ (2\pi)^{d-1} } \sup_{r>0} \Diraclambda_\rad(r) 
			\underset{\text{Thm.~\ref{thm:radial Dirac}}}{\leq} 
			\widetilde{C}_{d}(r^{-s},\phi_{m}(r)^{-1/2} r^{(2-s)/2},m) 
			\underset{\text{Cor.~\ref{cor:Dirac Schrodinger equivalence}}}{\leq}
			2 C_{d}(r^{-s},r^{(2-s)/2},r^2) 
			\underset{\text{Thm.~\ref{thm:Schrodinger explicit value}}}{=} c_0 .
		\end{equation}
		On the other hand, Lemma \ref{lem:BS 1.6} implies
		\begin{equation}
			\frac{1}{ (2\pi)^{d-1} }  \Diraclambda_\rad(r) = \frac{1}{2}\mleft( c_0 + c_1 + \frac{m^2}{r^2 + m^2}( c_0 - c_1 ) \mright) ,
		\end{equation}
		and so that
		\begin{equation}
			\frac{1}{ (2\pi)^{d-1} } \sup_{r>0} \Diraclambda_\rad(r) = c_0 . 
		\end{equation}
		Therefore, we conclude that 
		\begin{equation}
			\widetilde{C}_{d}(r^{-s},\phi_{m}(r)^{-1/2} r^{(2-s)/2},m)  =  c_0
		\end{equation}
		holds when $d \geq 4$ with $m > 0$. 
	\end{proof}
	Finally, we prove Theorem \ref{thm:type C Dirac}.
	\begin{proof}[Proof of Theorem \ref{thm:type C Dirac}]
		Combining Corollary \ref{cor:Dirac Schrodinger equivalence} and Theorem \ref{thm:Schrodinger explicit value}, we immediately obtain
		\begin{align}
			\widetilde{C}_{d}((1+r^2)^{-s/2},\phi_{m}(r)^{-1/2} r^{1/2},m) 
			\underset{\text{Cor.~\ref{cor:Dirac Schrodinger equivalence}}}&{\leq}
			2 C_{d}((1+r^2)^{-s/2}, r^{1/2} ,r^2)  \\
			\underset{\text{Thm.~\ref{thm:Schrodinger explicit value}}}&{=} \pi^{1/2} \frac{ \Gamma( (s-1)/2 ) }{ \Gamma(s) } .
		\end{align}
		On the other hand, we have
		\begin{align}
			\widetilde{C}_{d}((1+r^2)^{-s/2},\phi_{m}(r)^{-1/2} r^{1/2},m)
			\underset{\text{Thm.~\ref{thm:radial Dirac}}}&{\geq} \frac{1}{ (2\pi)^{d-1} } \sup_{r>0} \Diraclambda_k(r) \\
			&\geq \frac{1}{2 (2\pi)^{d-1}} \limsup_{r \to \infty}\mleft( \lambda_0(r) + \lambda_1(r) + \frac{m^2}{r^2 + m^2}( \lambda_0(r) - \lambda_1(r) ) \mright) ,
		\end{align}
		where $\lambda_k$ is that associated with $((1+r^2)^{-s/2},\phi_{m}(r)^{-1/2} r^{1/2},\phi_m)$.
		Hence, it is enough to show that
		\begin{equation}
			\frac{1}{2 (2\pi)^{d-1}} \limsup_{r \to \infty}\mleft( \lambda_0(r) + \lambda_1(r) + \frac{m^2}{r^2 + m^2}( \lambda_0(r) - \lambda_1(r) ) \mright) = \pi^{1/2} \frac{ \Gamma( (s-1)/2 ) }{ \Gamma(s) } .
		\end{equation}
		This follows from the fact that
		\begin{equation}
			\frac{1}{ (2\pi)^{d-1} } \lim_{r \to \infty} \lambda_k(r) = \pi^{1/2} \frac{ \Gamma( (s-1)/2 ) }{ \Gamma(s) } 
		\end{equation}
		holds for each $k \in \N$.
		See \cite[Theorem 2.2]{BS2017} and \cite[(1.11)]{BSS2015} for details.
	\end{proof}
	
	\section*{Acknowledgments}
	The authors would like to thank Mitsuru Sugimoto (Nagoya University) and Yoshihiro Sawano (Chuo University) for valuable discussions.
	\setcitestyle{numbers} 

\begin{thebibliography}{15}
	\providecommand{\natexlab}[1]{#1}
	\providecommand{\url}[1]{\texttt{#1}}
	\providecommand{\doi}[1]{doi:\href{https://doi.org/#1}{#1}}
	
	\bibitem[Ben-Artzi and Klainerman(1992)]{BK1992}
	Matania Ben-Artzi and Sergiu Klainerman.
	\newblock Decay and regularity for the {S}chr\"{o}dinger equation.
	\newblock \emph{Journal d'Analyse Mathématique}, 58\penalty0 (1):\penalty0
	25--37, 1992.
	\newblock \doi{10.1007/bf02790356}.
	\newblock \MR{MR1226935}.
	
	\bibitem[Ben-Artzi and Nemirovsky(1997)]{BN1997}
	Matania Ben-Artzi and Jonathan Nemirovsky.
	\newblock Remarks on relativistic {Schr\"odinger} operators and their
	extensions.
	\newblock \emph{Annales de l'Institut Henri Poincar\'{e}. Physique th\'{e}orique}, 67\penalty0
	(1):\penalty0 29--39, 1997.
	\newblock URL \url{https://www.numdam.org/item/AIHPA_1997__67_1_29_0/}.
	\newblock \MR{MR1463003}.
	
	\bibitem[Ben-Artzi and Umeda(2021)]{BU2021}
	Matania Ben-Artzi and Tomio Umeda.
	\newblock Spectral theory of first-order systems: from crystals to {D}irac
	operators.
	\newblock \emph{Reviews in Mathematical Physics}, 33\penalty0 (05):\penalty0
	Paper No. 2150014, 52, 2021.
	\newblock \doi{10.1142/s0129055x21500148}.
	\newblock \MR{MR4274368}.
	
	\bibitem[Ben-Artzi et~al.(2020)Ben-Artzi, Ruzhansky, and Sugimoto]{BRS2020}
	Matania Ben-Artzi, Michael Ruzhansky, and Mitsuru Sugimoto.
	\newblock Spectral identities and smoothing estimates for evolution operators.
	\newblock \emph{Advances in Differential Equations}, 25\penalty0 (11/12), 2020.
	\newblock \doi{10.57262/ade/1605150115}.
	\newblock \MR{MR4173171}.
	
	\bibitem[Bez and Sugimoto(2017)]{BS2017}
	Neal Bez and Mitsuru Sugimoto.
	\newblock Optimal constants and extremisers for some smoothing estimates.
	\newblock \emph{Journal d'Analyse Mathématique}, 131\penalty0 (1):\penalty0
	159--187, 2017.
	\newblock \doi{10.1007/s11854-017-0005-8}.
	\newblock \MR{MR3631453}.
	
	\bibitem[Bez et~al.(2015)Bez, Saito, and Sugimoto]{BSS2015}
	Neal Bez, Hiroki Saito, and Mitsuru Sugimoto.
	\newblock Applications of the {F}unk-{H}ecke theorem to smoothing and trace
	estimates.
	\newblock \emph{Advances in Mathematics}, 285:\penalty0 1767--1795, 2015.
	\newblock \doi{10.1016/j.aim.2015.08.025}.
	\newblock \MR{MR3406541}.
	
	\bibitem[Chihara(2002)]{Chi2002}
	Hiroyuki Chihara.
	\newblock Smoothing effects of dispersive pseudodifferential equations.
	\newblock \emph{Communications in Partial Differential Equations}, 27\penalty0
	(9-10):\penalty0 1953--2005, 2002.
	\newblock \doi{10.1081/pde-120016133}.
	\newblock \MR{MR1941663}.
	
	\bibitem[Ikoma(2022)]{Iko2022}
	Makoto Ikoma.
	\newblock Optimal constants of smoothing estimates for the 2{D} {D}irac
	equation.
	\newblock \emph{Journal of Fourier Analysis and Applications}, 28\penalty0
	(4):\penalty0 Paper No. 57, 19, 2022.
	\newblock \doi{10.1007/s00041-022-09950-6}.
	\newblock \MR{MR4447305}.
	
	\bibitem[Ikoma and Suzuki(2023)]{IkS2023}
	Makoto Ikoma and Soichiro Suzuki.
	\newblock Optimal constants of smoothing estimates for {Dirac} equations with
	radial data.
	\newblock \emph{preprint in arXiv}, 2023.
	\newblock \doi{10.48550/arXiv.2306.08982}.
	
	\bibitem[Kato and Yajima(1989)]{KY1989}
	Tosio Kato and Kenji Yajima.
	\newblock Some examples of smooth operators and the associated smoothing
	effect.
	\newblock \emph{Reviews in Mathematical Physics}, 1\penalty0 (4):\penalty0
	481--496, 1989.
	\newblock \doi{10.1142/s0129055x89000171}.
	\newblock \MR{MR1061120}.
	
	\bibitem[Simon(1992)]{Sim1992}
	Barry Simon.
	\newblock Best constants in some operator smoothness estimates.
	\newblock \emph{Journal of Functional Analysis}, 107\penalty0 (1):\penalty0
	66--71, 1992.
	\newblock \doi{10.1016/0022-1236(92)90100-w}.
	\newblock \MR{MR1165866}.
	
	\bibitem[Sugimoto(1998)]{Sug1998}
	Mitsuru Sugimoto.
	\newblock Global smoothing properties of generalized {S}chr\"{o}dinger
	equations.
	\newblock \emph{Journal d'Analyse Mathématique}, 76\penalty0 (1):\penalty0
	191--204, 1998.
	\newblock \doi{10.1007/bf02786935}.
	\newblock \MR{MR1676995}.
	
	\bibitem[Vilela(2001)]{Vil2001}
	Mari~Cruz Vilela.
	\newblock Regularity of solutions to the free {S}chr\"{o}dinger equation with
	radial initial data.
	\newblock \emph{Illinois Journal of Mathematics}, 45\penalty0 (2):\penalty0
	361--370, 2001.
	\newblock \doi{10.1215/ijm/1258138345}.
	\newblock \MR{MR1878609}.
	
	\bibitem[Walther(1999)]{Wal1999}
	Bj\"{o}rn~G. Walther.
	\newblock A sharp weighted {$L^2$}-estimate for the solution to the
	time-dependent {S}chr\"{o}dinger equation.
	\newblock \emph{Arkiv för Matematik}, 37\penalty0 (2):\penalty0 381--393,
	1999.
	\newblock \doi{10.1007/bf02412222}.
	\newblock \MR{MR1714761}.
	
	\bibitem[Walther(2000)]{Wal2000}
	Bj\"{o}rn~G. Walther.
	\newblock Homogeneous estimates for oscillatory integrals.
	\newblock \emph{Acta Mathematica Universitatis Comenianae. New Series},
	69:\penalty0 151--171, 2000.
	\newblock URL \url{http://eudml.org/doc/121290}.
	\newblock \MR{MR1819518}.
	
\end{thebibliography}

\end{document}